\def\NZQ{\mathbb}               
  \def\ZZ{{\NZQ Z}}    \def\HH{{\NZQ H}}
\def\opn#1#2{\def#1{\operatorname{#2}}} 
\opn\GL{GL} \opn\auto{Aut} \opn\inn{Inn} \opn\fix{Fix} \opn\ord{ord} \opn\id{id} \opn\lcm{lcm} \opn\TN{TN} 
\opn\Ker{Ker} \opn\Im{Im} \opn\GA{GAQ} \opn\rank{rank} \opn\Dis{Dis}
   \def\Qc{{\mathcal Q}}
\newcommand{\Sf}{\mathfrak{S}}
\newcommand{\Af}{\mathfrak{A}}
\newcommand{\Dic}{\mathrm{Dic}}
\newcommand{\piconj}{a}
\newcommand{\ginn}{\inn_{\mathrm{gr}}}
\newcommand{\qinn}{\inn_{\mathrm{qu}}}
\newcommand{\gauto}{\auto_{\mathrm{gr}}}
\newcommand{\qauto}{\auto_{\mathrm{qu}}}
\newtheorem{thm}{Theorem}[section]
\newtheorem{lem}[thm]{Lemma}
\newtheorem{prop}[thm]{Proposition}
\newtheorem{cor}[thm]{Corollary}
\newtheorem{prob}[thm]{Problem}
\theoremstyle{definition}
\newtheorem{ex}[thm]{Example}
\newtheorem*{acknowledgement}{Acknowledgments}
\theoremstyle{remark}
\newtheorem{rem}[thm]{Remark}
\begin{document}

\title{Generalized Alexander quandles of finite groups and their characterizations}

\author{Akihiro Higashitani}
\author{Hirotake Kurihara}
\thanks{
{\bf 2010 Mathematics Subject Classification:} Primary; 20N02, Secondary: 20B05, 53C35. \\
\;\;\;\; {\bf Keywords:} 
generalized Alexander quandles, finite groups, automorphisms.}
\address{Akihiro Higashitani, Department of Pure and Applied Mathematics, Graduate School of Information Science and Technology, Osaka University, 
Yamadaoka 1-5, Suita, Osaka 565-0871, Japan}
\email{higashitani@ist.osaka-u.ac.jp}
\address{Hirotake Kurihara, Department of Applied Science, Yamaguchi University, 
Tokiwadai 2-16-1, Ube 755-8611, Japan}
\email{kurihara-hiro@yamaguchi-u.ac.jp}

\begin{abstract}
The goal of this paper is to characterization generalized Alexander quandles of finite groups in the language of the underlying groups. 
Firstly, we prove that if finite groups $G$ are simple, then the quandle isomorphic classes of generalized Alexander quandles of $G$ 
one-to-one correspond to the conjugacy classes of the automorphism groups of $G$. 
This correspondence can be also claimed for the case of symmetric groups. 
Secondly, we give a characterization of generalized Alexander quandles of finite groups $G$ under some assumptions in terms of $G$. 
As corollaries of this characterization, we obtain several characterizations in some particular groups, e.g., abelian groups and dihedral groups. 
Finally, we perform a characterization of generalized Alexander quandles arising from groups with their order up to $15$. 
\end{abstract}

\maketitle

\tableofcontents

\section{Introduction}

\subsection{Backgrounds and motivations}

Let us recall what a quandle is. 
A set $Q$ equipped with a binary operation $*$ is said to be a \textit{quandle} if the following three conditions are satisfied: 
\begin{itemize}
\item[(Q1)] $x*x=x$ for any $x \in Q$; 
\item[(Q2)] for each $x,y \in Q$, there is a unique $z \in Q$ such that $z*y=x$; 
\item[(Q3)] for any $x,y,z \in Q$, we have $(x*y)*z=(x*z)*(y*z)$. 
\end{itemize}
A notion of quandles was originally introduced in \cite{Joyce} in the context of knot theory. 
In fact, these axioms correspond to the Reidemeister moves. 

In this paper, we employ the following definition of quandles using ``point symmetry'', which is a map globally defined at each point. 
We say that a set $Q$ equipped with a point symmetry $s_x : Q \rightarrow Q$ for each $x \in Q$ is a quandle if the following three conditions are satisfied: 
\begin{itemize}
\item[(Q1')] $s_x(x)=x$ for each $x \in Q$; 
\item[(Q2')] $s_x$ is a bijection on $Q$ for each $x \in Q$; 
\item[(Q3')] $s_x \circ s_y=s_{s_{x(y)}} \circ x_x$ holds for each $x,y \in Q$. 
\end{itemize}
We can directly verify that those definitions are equivalent by letting $s_x( \cdot ) : Q \rightarrow Q$ be $(\cdot) * x$. 
To be precise, we should denote a quandle $Q$ by $(Q,*)$ or $(Q,s)$, i.e., a pair of a set $Q$ and a binary operation $*$ or a point symmetry $s_x( \cdot ) (x \in Q)$,  
but we often write it for $Q$ if there is no confusion, while we also write, e.g., $(Q,s)$ or $(Q,s')$ if we want to distinguish the equipped operations. 

The above equivalent definition derives from the theory of symmetric spaces since quandles can be regarded as symmetric spaces ``without differential structure''. 
In particular, due to the similarity of the theory of homogeneous spaces, homogeneous quandles have been intensively studies 
(see, e.g., \cite{Bonatto, IT,KTW,Sin,Tam,Ven,Wada}). 
Here, a quandle $Q$ is said to be \textit{homogeneous} if the automorphism group of $Q$ acts transitively on $Q$. 
Moreover, it is proved that any homogeneous quandles can be obtained 
from a triple of a group $G$, its subgroup $K$ and its automorphism $\psi$ satisfying a certain condition, called a \textit{quandle triplet}. 
(See \cite[Section 3]{IT} and \cite[Section 7]{Joyce}.) 
In particular, in the case where $K=\{e\}$, we can always construct a certain homogeneous quandle, which we will denote $Q(G,\psi)$. 
This homogeneous quandle is called a \textit{generalized Alexander quandle} (also known as a \textit{principal quandle}) of $G$ and $\psi$. 
See Subsection~\ref{subsec:genAlex} for more details.
Note that Alexander quandles correspond to the case where $G$ is abelian. 

The main object of this paper is generalized Alexander quandles $Q(G,\psi)$ for finite groups $G$. 
The symbols that appear in the following sections will be explained in Section~\ref{sec:notation}.

Given a finite group $G$, let $\Qc(G)$ be the set of quandle isomorphic classes of $Q(G,\psi)$'s, i.e., $$\Qc(G):=\{Q(G,\psi) : \psi \in \gauto(G)\}/\cong_\mathrm{qu}.$$
The following problem naturally arises. 
\begin{prob}[{\cite[Problem 3.5]{HK}}]\label{toi}
Determine $\Qc(G)$ for a given group $G$. 
\end{prob}

Nelson completely solves this problem in the case of finite abelian groups in \cite[Theorem 2.1]{Nelson}. 
The goal of this paper is to give a more general statement for this problem. 

\bigskip

\subsection{Key notion}

Let $G$ be a finite group with its identity $e$, let $\psi$ be an automorphism of $G$ and let $Q=Q(G,\psi)$ be a generalized Alexander quandle. 
In this paper, we introduce an invariant of generalized Alexander quandle: 
\begin{equation}\label{P}
\begin{split}
P=P(Q) &:= \qinn(Q)e \\
&=\{ x\in G : \exists a_1,\ldots , \exists a_r \in G \text{ such that }x=s_{a_1} \circ \cdots \circ s_{a_r} (e)\}. 
\end{split}
\end{equation}
Namely, $P$ is the orbit of $e$ by the action of $\qinn(Q(G,\psi))$. 
Note that the same notion has already appeared in \cite{Bonatto}. See Remark~\ref{rem:Bonatto}. 

This $P$ plays a crucial role for the solution of Problem~\ref{toi}. 
In fact, $P$ satisfies several important properties on quandle isomorphism and will play key roles for the proofs of our main results.

\bigskip

\subsection{Main results}

The first main result of this paper is the following.
\begin{thm}\label{simple}
Let $G$ be a finite simple group. Then $\Qc(G)$ one-to-one corresponds to the set of conjugacy classes of $\gauto(G)$.  
\end{thm}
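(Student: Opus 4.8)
The plan is to define a map from the set of $\gauto(G)$-conjugacy classes to $\Qc(G)$ by sending the class of $\psi$ to the class of $Q(G,\psi)$, and to prove it is a well-defined bijection. Throughout I use that $Q(G,\psi)$ has underlying set $G$ with $s_y(x)=\psi(x)\psi(y)^{-1}y$, so that $s_e=\psi$ and $s_a\circ s_e^{-1}=R_{\psi(a)^{-1}a}$, where $R_g\colon x\mapsto xg$. \emph{Well-definedness and surjectivity} are the easy direction: a direct check on $*$ shows that for $\varphi\in\gauto(G)$ the permutation $\varphi$ is itself a quandle isomorphism from $Q(G,\psi)$ onto $Q(G,\varphi\psi\varphi^{-1})$, and surjectivity onto $\Qc(G)$ is immediate from the definition of $\Qc(G)$. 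The whole content is therefore \emph{injectivity}: if $Q(G,\psi_1)\cong_{\mathrm{qu}}Q(G,\psi_2)$ then $\psi_1$ and $\psi_2$ are conjugate in $\gauto(G)$.

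First I would pin down $\qinn(Q(G,\psi))$ as a subgroup of $\mathfrak{S}_G$. From $s_e=\psi$ and $s_a\circ s_e^{-1}=R_{\psi(a)^{-1}a}$ one gets $\qinn(Q(G,\psi))=R_N\rtimes\langle\psi\rangle$, where $N=N(\psi)=P(Q(G,\psi))=\langle\,\psi(g)^{-1}g : g\in G\,\rangle$ and $R_N=\{R_g : g\in N\}$; a short computation inside $G\rtimes\langle\psi\rangle$ shows $N\trianglelefteq G$, that $N=\{e\}$ precisely when $\psi=\id$, and that every $\qinn(Q(G,\psi))$-orbit on $G$ is a coset of $N$, hence has cardinality $|N|=|P|$. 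Since $|P|$ is a quandle-isomorphism invariant, $Q(G,\psi_1)\cong Q(G,\psi_2)$ forces $|N(\psi_1)|=|N(\psi_2)|$; because $G$ is simple this common value is $1$ or $|G|$. If it is $1$, then $\psi_1=\psi_2=\id$ and there is nothing to prove (the conjugacy class is a singleton, and $Q(G,\id)$ is the trivial quandle). So I may assume $\psi_1,\psi_2\ne\id$, i.e.\ $N(\psi_i)=G$, so that $Q(G,\psi_i)$ is connected and $\qinn(Q(G,\psi_i))=R_G\rtimes\langle\psi_i\rangle$ with $R_G\cong G$.

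Now comes the step where simplicity is essential: inside $\qinn(Q(G,\psi))=R_G\rtimes\langle\psi\rangle$, the subgroup $R_G$ is the \emph{unique} subgroup that is simultaneously minimal normal and transitive on $G$. Indeed, a normal subgroup contained in $R_G$ is $\{e\}$ or $R_G$ since $R_G\cong G$ is simple, so $R_G$ is minimal normal; and any minimal normal subgroup $M$ with $M\cap R_G=\{e\}$ embeds into $\langle\psi\rangle$, hence is cyclic of prime order, and if such an $M$ is transitive then $|G|$ is prime, forcing $G\cong\ZZ/p$ and $M$ to be the unique Sylow $p$-subgroup, which is $R_G$. Given this, let $f\colon Q(G,\psi_1)\to Q(G,\psi_2)$ be a quandle isomorphism; composing $f$ with a suitable element of $\qinn(Q(G,\psi_2))$ (available because $Q(G,\psi_2)$ is connected) I may assume $f(e)=e$. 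From $f\circ s^{(1)}_y\circ f^{-1}=s^{(2)}_{f(y)}$ it follows that conjugation by the permutation $f$ carries $\qinn(Q(G,\psi_1))$ onto $\qinn(Q(G,\psi_2))$ and preserves both minimal normality and transitivity, so $fR_Gf^{-1}=R_G$. Since $N_{\mathfrak{S}_G}(R_G)$ is the holomorph $\{\,x\mapsto\sigma(x)a : \sigma\in\gauto(G),\ a\in G\,\}$ and $f(e)=e$, this yields $f\in\gauto(G)$; finally $f\circ s^{(1)}_e\circ f^{-1}=s^{(2)}_e$ reads $f\psi_1 f^{-1}=\psi_2$, so $\psi_1$ and $\psi_2$ are conjugate in $\gauto(G)$, as desired.

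The hard part is the uniqueness assertion in the third step: one must characterize $R_G$ intrinsically enough that it is preserved by an arbitrary quandle isomorphism — which only sees $\qinn$ up to conjugation by a permutation of $G$ — rather than merely by an abstract automorphism of $\qinn$; this is exactly where I use that $G$ has no proper nontrivial normal subgroup. By contrast, the semidirect-product description of $\qinn(Q(G,\psi))$, the invariance of $|P|$, and the identification of $N_{\mathfrak{S}_G}(R_G)$ with the holomorph are routine and do not need simplicity.
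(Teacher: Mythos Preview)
Your proof is correct and takes a genuinely different route from the paper's.

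The paper proves a general lemma (its Theorem~3.9): for \emph{any} finite $G$ and any quandle isomorphism $f\colon Q(G,\psi)\to Q(G',\psi')$ with $f(e)=e'$, the restriction $f|_P$ is already a group isomorphism $P\to P'$ and satisfies $f\circ\psi=\psi'\circ f$. The key computation is that for $x\in P$ one can write the left translation $L_x$ as a word in the $s_{a_i}$'s (their Lemma~3.1), which forces $f(xy)=f(x)f(y)$ on $P$. Simplicity is only invoked at the very end to get $P=G$, whence $f$ itself lies in $\gauto(G)$ and conjugates $\psi$ to $\psi'$.

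Your argument instead characterizes $R_G$ intrinsically inside $\qinn(Q(G,\psi))=R_G\rtimes\langle\psi\rangle$ as the unique subgroup that is both minimal normal and transitive on $G$, and then uses that the normalizer of $R_G$ in $\Sf_G$ is the holomorph, with point-stabilizer $\gauto(G)$. This is a clean permutation-group argument and nicely isolates exactly where simplicity enters (the uniqueness of $R_G$). The trade-off is that your approach does not obviously extend to non-simple $G$: once $P\subsetneq G$, the subgroup $R_P$ is no longer transitive and your uniqueness characterization breaks down, whereas the paper's computation that $f|_P$ is a group homomorphism survives and is the engine behind their further results (Theorem~1.5 and its corollaries). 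A minor remark: your convention $s_y(x)=\psi(x)\psi(y)^{-1}y$ differs from the paper's $s_x(y)=x\psi(x^{-1}y)$, but the two quandle structures are isomorphic via $x\mapsto x^{-1}$, so this is harmless.
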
 
Although it is well known that the symmetric group on $\{1,\ldots,n\}$ is not simple for $n \geq 3$, we immediate obtain the following as a corollary of this theorem. 
\begin{cor}\label{cor:S_n}
Let $\Sf_n$ be the symmetric group on $\{1,\ldots,n\}$. Then $\Qc(\Sf_n)$ one-to-one corresponds to the set of conjugacy classes of $\gauto(\Sf_n)$. 
\end{cor}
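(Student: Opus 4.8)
The plan is to show directly that the assignment $[\psi]\mapsto Q(\Sf_n,\psi)$ is a bijection from the set of conjugacy classes of $\gauto(\Sf_n)$ onto $\Qc(\Sf_n)$, reusing the ingredients of the proof of Theorem~\ref{simple} but with $\Af_n$, rather than $\Sf_n$ itself, in the role of the ``largest'' orbit. The ingredients I would isolate first are: (i) conjugate automorphisms yield isomorphic generalized Alexander quandles, so the assignment is well defined (surjectivity being tautological); (ii) every right translation $R_h\colon g\mapsto gh$ is an automorphism of $Q(G,\psi)$, so after composing any quandle isomorphism $f\colon Q(\Sf_n,\psi)\to Q(\Sf_n,\psi')$ with a suitable $R_h$ we may assume $f(e)=e$; and (iii) once $f(e)=e$, the identities $x*e=\psi(x)$ and $x*b=\psi(x)\cdot(e*b)$ force $f\psi=\psi' f$ as bijections of $\Sf_n$ and force $f$ to carry the subgroup $P=P(Q(\Sf_n,\psi))=\langle\, e*b : b\in\Sf_n\,\rangle=\langle\,\psi(b)^{-1}b : b\in\Sf_n\,\rangle$ of \eqref{P} isomorphically onto $P(Q(\Sf_n,\psi'))$. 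Thus everything reduces to the injectivity statement: a quandle isomorphism normalized so that $f(e)=e$ gives $\psi'=f\psi f^{-1}$ with $f|_P$ a group isomorphism onto $P'$, from which I must deduce $\psi\sim\psi'$ in $\gauto(\Sf_n)$.

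Next I would pin down $P$. Since $\Af_n$ is characteristic in $\Sf_n$, every automorphism of $\Sf_n$ preserves the sign homomorphism, so $\psi(b)^{-1}b\in\Af_n$ for all $b$ and hence $P\subseteq\Af_n$; being a $\psi$-invariant normal subgroup, $P$ equals $\{e\}$, or $\Af_n$, or — only when $n=4$ — the Klein four-group $V_4$, and these are separated by $|P|$, which is a quandle-isomorphism invariant. If $P=\{e\}$ then $\psi=\id$ and $Q(\Sf_n,\psi)$ is the trivial quandle; as $\{\id\}$ is a one-element conjugacy class and no other $\psi$ produces the trivial quandle, this case matches up. Note in particular that $P=\Sf_n$ never occurs, so none of these quandles is connected: the corollary is not literally an instance of Theorem~\ref{simple}, but the same machinery applies.

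For the main case $P=\Af_n$ (so also $P'=\Af_n$, and $f|_{\Af_n}\in\auto(\Af_n)$), I would invoke the classical fact that for every $n\ge 4$ the restriction map $\gauto(\Sf_n)\to\auto(\Af_n)$ is an isomorphism: it is injective because $C_{\Sf_n}(\Af_n)=\{e\}$ prevents any nontrivial automorphism of $\Sf_n$ from fixing $\Af_n$ pointwise, and then bijective by the order equality $|\gauto(\Sf_n)|=|\auto(\Af_n)|$ (equal to $n!$ for all $n\ge 4$ except $1440$ at $n=6$). Lifting $f|_{\Af_n}$ to $\hat f\in\gauto(\Sf_n)$, the automorphisms $\psi'$ and $\hat f\psi\hat f^{-1}$ agree on $\Af_n$, hence coincide by injectivity of the restriction, so $\psi\sim\psi'$. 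Combined with (i), this settles $n\ge 4$ apart from the $V_4$-case.

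It remains to mop up $n\le 3$ and the $V_4$-case at $n=4$. For $P=V_4$ (so $n=4$) one checks that $\psi=\inn_\rho$ forces $\rho\in V_4\setminus\{e\}$ — otherwise $\psi$ acts nontrivially on $\Sf_4/V_4\cong\Sf_3$, contradicting $P\subseteq V_4$ — and since $V_4\setminus\{e\}$ is a single conjugacy class of $\Sf_4$, all such $\psi$ are conjugate; so this case contributes a single quandle class (recognized by $|P|=4$) matched with a single conjugacy class. For $n=3$ with $P=\Af_3\cong\ZZ/3$ the restriction $\gauto(\Sf_3)\to\auto(\Af_3)$ is no longer injective, so instead I would use the auxiliary invariant that every point symmetry satisfies $s_b^{\,k}(x)=\psi^k(x)\,\psi^k(b)^{-1}b$, whence $\ord(s_b)=\ord(\psi)$; thus $\ord(\psi)$ is a quandle invariant, it separates the two relevant conjugacy classes (transpositions versus $3$-cycles), and each of these collapses to a single quandle class, giving the bijection; the cases $n\le 2$ are trivial since $\gauto$ is itself trivial. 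The step I expect to be the real obstacle is precisely this low-rank bookkeeping: the clean structural argument works exactly when the large orbit is $\Af_n$ with $n\ge 4$, and for $n=3$ and the $V_4$-orbit at $n=4$ one must check by hand — through $\ord(\psi)$, the triviality/size of $P$, and the conjugacy structure of $\Sf_n$ — that no two classes on either side are accidentally identified.
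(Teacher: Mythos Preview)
Your approach is essentially the paper's: identify $P(Q(\Sf_n,\psi))=\Af_n$ for nontrivial $\psi$, use Theorem~\ref{breakthrough} to get a group isomorphism $f|_{\Af_n}$ intertwining $\psi|_{\Af_n}$ and $\psi'|_{\Af_n}$, and then transport this through the restriction isomorphism $\gauto(\Sf_n)\to\gauto(\Af_n)$. Two minor slips to fix: with the paper's conventions it is the \emph{left} translation $L_a\colon x\mapsto ax$ (not $R_h$) that lies in $\qauto(Q)$, and one has $x*b=(e*b)\cdot\psi(x)$ rather than $\psi(x)\cdot(e*b)$; neither affects the logic. Where the paper simply cites \cite{HK} for $n\le 4$, you handle these cases directly (via $\ord\psi$ for $n=3$ and the $V_4$ bookkeeping for $n=4$), which is fine; and your final step --- concluding $\psi'=\hat f\psi\hat f^{-1}$ immediately from injectivity of the restriction map --- is slightly cleaner than, but equivalent to, the paper's explicit verification that $\varphi=\psi'^{-1}\circ f\circ\psi\circ f^{-1}$ is the identity on $\Sf_n\setminus\Af_n$.
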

See Subsection~\ref{sec:proof_simple} for the proofs of Theorem~\ref{simple} and Corollary~\ref{cor:S_n}. 
Note that this corollary implies \cite[Theorem 5.14]{HK}, which claims the same result in the case $n \in \{3,4,\ldots,30\} \setminus \{15\}$, 
and gives an affirmative answer for \cite[Question 5.15]{HK}.

\bigskip

Next, we consider generalized Alexander quandles satisfying \ref{G-normal} and \ref{iikanji} 
(definitions of \ref{G-normal} and \ref{iikanji} will appear in Section~\ref{sec:PandP2}). 
We can give an equivalent condition of the quandle isomorphic property
among generalized Alexander quandles satisfying \ref{G-normal} and \ref{iikanji}.
This equivalent condition is the second main result of this paper 
and the detail is the following.
\begin{thm}\label{monosugoi}
Let $G$ and $G'$ be finite groups, let $\psi \in \gauto(G)$ and $\psi' \in \gauto(G')$, and let $Q=Q(G,\psi)$ and $Q'=Q(G',\psi')$, respectively. 
Assume that $Q$ and $Q'$ satisfy \ref{G-normal} and \ref{iikanji}. 
Then $Q \cong_\mathrm{qu} Q'$ holds if and only if the following conditions are satisfied: 
\begin{itemize}
\item[{\em (A)}] $|G|=|G'|$; 
\item[{\em (B)}] $|\fix(\psi,G)|=|\fix(\psi',G')|$; 
\item[{\em (C)}] there is a group isomorphism $h$ from $P=P(Q)$ to $P'=P(Q')$ (i.e., $P \cong_\mathrm{gr} P'$) satisfying the following conditions:  
\begin{itemize}
\item[{\em (C-1)}] $h \circ \psi|_P = \psi'|_{P'} \circ h$ and 
\item[{\em (C-2)}] for any $a \in G$ there is $a' \in G'$ such that $h(s_a(e))=s_{a'}'(e')$ holds. 
\end{itemize}
\end{itemize}
\end{thm}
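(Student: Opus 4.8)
The plan is to prove both implications. Recall from Section~\ref{sec:PandP2} the following structural facts about $Q=Q(G,\psi)$ (with the standard convention $x\ast y=y\,\psi(y^{-1}x)$): the point symmetry at $a$ is $s_a(x)=a\,\psi(a^{-1}x)$, so $s_e=\psi$ and $s_a=L_a\circ\psi\circ L_a^{-1}$ where $L_g(x)=gx$; every $L_g$ and $\psi$ itself is a quandle automorphism, hence $Q$ is homogeneous; $P=\langle g^{-1}\psi(g):g\in G\rangle$ is a normal, $\psi$-invariant subgroup of $G$, the map $\psi$ acts trivially on $G/P$, the connected components of $Q$ (the $\qinn(Q)$-orbits) are precisely the cosets $gP$, and $z\mapsto g^{-1}z$ identifies the component $gP$ with the connected quandle $Q(P,\psi|_P)$; moreover $s_a s_b^{-1}=L_{a\psi(a)^{-1}\psi(b)b^{-1}}$, so the subgroup $\langle s_a s_b^{-1}:a,b\in G\rangle$ of $\qinn(Q)$ equals $\{L_m:m\in P\}$.

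For the ``only if'' direction, let $f\colon Q\to Q'$ be a quandle isomorphism; composing with a left translation of $Q'$ (possible since $Q'$ is homogeneous) we may assume $f(e)=e'$. Then $f$ maps the component of $e$, namely $P$, onto the component of $e'$, namely $P'$. Since $f\circ s_x=s_{f(x)}\circ f$, the map $f$ conjugates $\langle s_a s_b^{-1}:a,b\in G\rangle$ onto the analogous subgroup of $\qinn(Q')$, which equals $\{L_{m'}:m'\in P'\}$; hence $f\circ L_m\circ f^{-1}=L_{h(m)}$ on $P'$ for a unique $h(m)\in P'$, and because $L_mL_n=L_{mn}$ the assignment $h\colon P\to P'$ is a group isomorphism, which equals $f|_P$ upon evaluating $f\circ L_m\circ f^{-1}=L_{h(m)}$ at $e'$. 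Conjugating $s_e=\psi$ gives $f\circ\psi=\psi'\circ f$, hence $h\circ\psi|_P=\psi'|_{P'}\circ h$, which is (C-1). For $a\in G$ we have $s_a(e)=a\psi(a)^{-1}\in P$, so $h(s_a(e))=f(s_a(e))=s_{f(a)}(f(e))=s'_{f(a)}(e')$, which is (C-2) with $a'=f(a)$. Finally (A) is immediate, and (B) holds because $\fix(\psi,G)$ is the fixed-point set of the point symmetry $s_e$, which $f$ carries bijectively onto that of $s_{e'}$, i.e.\ onto $\fix(\psi',G')$.

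For the ``if'' direction assume (A), (B), (C) together with \ref{G-normal} and \ref{iikanji}; the goal is to construct a quandle isomorphism $f\colon Q\to Q'$. By (C), $h$ is already a quandle isomorphism from the component $P\subseteq Q$ onto the component $P'\subseteq Q'$, and the task is to extend it coherently over the remaining components. Using \ref{G-normal} we pick representatives $\{r_{\bar g}\}_{\bar g\in G/P}$ and $\{r'_{\bar g}\}_{\bar g\in G'/P'}$ of the cosets of $P$ resp.\ $P'$ that are compatible with $\psi$ resp.\ $\psi'$ (so that $\psi(r_{\bar g})=r_{\bar g}$, and likewise with primes); in the coordinates $r_{\bar g}p\leftrightarrow(\bar g,p)$ one then finds that the operation of $Q$ takes the form
$$(\bar x,p)\ast(\bar y,q)=\Bigl(\bar x,\ \gamma(\bar x,\bar y)\,(q\psi(q)^{-1})\,\gamma(\bar x,\bar y)^{-1}\,\psi(p)\Bigr),\qquad \gamma(\bar x,\bar y)=r_{\bar x}^{-1}r_{\bar y},$$
and similarly for $Q'$, where $\gamma$ records how $\fix(\psi)$ conjugates $P$; hypothesis \ref{iikanji} is precisely what keeps this conjugation under control, equivalently what guarantees that each $s_a$ restricts on each component to a point symmetry of that component. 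Now (A) and (B) give $|G/P|=|G'/P'|$ together with matching fixed-point data, and using these and \ref{iikanji} we fix a bijection $\sigma\colon G/P\to G'/P'$ along which the cocycles $\gamma$ and $\gamma'$ correspond through $h$; then $f(r_{\bar g}p):=r'_{\sigma(\bar g)}h(p)$ is the desired map. That $f$ is a bijective quandle homomorphism is a direct verification in these coordinates: (C-1) moves $\psi$ past $h$, (C-2) is what makes the point symmetries $s_a$ with $a\notin P$ --- whose restrictions to $P$ are automorphisms of the quandle $P$ but need not be among its own point symmetries --- correspond to their primed counterparts, and the matching of $\gamma$ with $\gamma'$ disposes of the remaining cross-terms.

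I expect the main obstacle to be this extension step: a map built blockwise from $h$ is not automatically a quandle homomorphism, since the point symmetries of $Q$ entangle the cosets of $P$, so the bijection $\sigma$ must be chosen so that the entanglement data of $Q$ and $Q'$ correspond --- and isolating and matching that data is exactly the role of \ref{G-normal}, \ref{iikanji} and condition (B) (it is also the point at which the insufficiency of (A) and (C) alone would surface). Once $\sigma$ is fixed, the remaining verification of the quandle axioms for $f$ is routine bookkeeping with coset representatives together with repeated use of (C-1).
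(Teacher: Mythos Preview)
Your ``only if'' direction is correct and essentially coincides with the paper's argument (the paper packages the key facts as Theorem~\ref{breakthrough}, which you re-derive inline).

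Your ``if'' direction, however, contains a concrete error and is otherwise too sketchy to count as a proof. The error is the claim that one may choose coset representatives $r_{\bar g}$ of $G/P$ with $\psi(r_{\bar g})=r_{\bar g}$. This is false in general, even under \ref{G-normal} and \ref{iikanji}. Take $G=C_4$ and $\psi(x)=-x$: then $s_a(0)=2a$, so $P=\{0,2\}$, $P^2=\{0\}$, and both \ref{G-normal} and \ref{iikanji} hold; but $\fix(\psi,G)=\{0,2\}=P$, so the coset $\{1,3\}$ contains no $\psi$-fixed element. Your displayed formula for the quandle operation in the $(\bar g,p)$-coordinates depends on this choice, and so does your candidate $f(r_{\bar g}p)=r'_{\sigma(\bar g)}h(p)$; without $\psi$-fixed representatives the cross-terms do not simplify as you describe.

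Beyond that, the sentence ``fix a bijection $\sigma\colon G/P\to G'/P'$ along which the cocycles $\gamma$ and $\gamma'$ correspond through $h$'' is the entire content of the ``if'' direction and is not established. The paper does this in a genuinely different way: it does \emph{not} seek $\psi$-fixed representatives, but instead introduces $S=\{s_a(e)P^2:a\in G\}\subseteq P/P^2$, proves the counting formula $|S|=|G|\cdot|P\cap F|/(|P|\cdot|F|)$ via the twisted normalizer $\TN^\psi_G(P^2)$ and Lemma~\ref{lem:PF} (this is where \ref{G-normal} and \ref{iikanji} are actually used), and deduces $|S|=|S'|$ from (A), (B), (C). This yields a bijection $k\colon A\to G'$ on a set $A$ of coset representatives with the weaker, but attainable, property $h(s_a(e))=s'_{k(a)}(e')$. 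The correct extension is then $f(a_xp_x)=h(a_xp_xa_x^{-1})\,k(a_x)$, which differs from your formula by an essential conjugation. The verification that this $f$ is a quandle homomorphism is a short computation using (C-1) and the defining property of $k$; your final paragraph correctly identifies this as ``the main obstacle'', but the paper's resolution of it is not the one you outline.
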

This theorem induces many interesting results. 
In fact, there are many generalized Alexander quandles satisfying the conditions \ref{G-normal} and \ref{iikanji}.
The following three corollaries follow from Theorem~\ref{monosugoi}. 

\begin{cor}[{\cite[Theorem 2.1]{Nelson}}]\label{cor1}
Let $G$ and $G'$ be finite abelian groups and let $\psi \in \gauto(G)$ and $\psi' \in \gauto(G')$, respectively. 
Let $\rho=\id_G - \psi$ and let $\rho'=\id_{G'}-\psi'$. Then $Q(G,\psi) \cong_\mathrm{qu} Q(G',\psi')$ holds if and only if 
$|G|=|G'|$ holds and there is a group isomorphism $h : \Im \rho \rightarrow \Im \rho'$ satisfying $h \circ \psi|_{\Im \rho} = \psi'|_{\Im \rho'} \circ h$. 
\end{cor}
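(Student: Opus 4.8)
The plan is to deduce Corollary~\ref{cor1} from Theorem~\ref{monosugoi}. Write $G$ additively, so that the point symmetry of $Q=Q(G,\psi)$ takes the form
\[
s_a(x)=\psi(x-a)+a=\psi(x)+(\id_G-\psi)(a)=\psi(x)+\rho(a),
\]
and note that $\rho$ and $\psi$ commute as endomorphisms of $G$. The first task is to verify that $Q(G,\psi)$ and $Q(G',\psi')$ satisfy the hypotheses \ref{G-normal} and \ref{iikanji} of Theorem~\ref{monosugoi}: since $\Im\rho$ is automatically a normal subgroup of the abelian group $G$ and is $\psi$-invariant, this reduces to a direct inspection of the definitions from Section~\ref{sec:PandP2}.

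Next I would compute the invariant $P=P(Q)$ explicitly. Iterating the formula for $s_a$ gives
\[
s_{a_1}\circ\cdots\circ s_{a_r}(e)=\rho(a_1)+\psi\rho(a_2)+\cdots+\psi^{r-1}\rho(a_r),
\]
and every summand $\psi^{i}\rho(a_j)=\rho(\psi^{i}(a_j))$ lies in $\Im\rho$; conversely, taking $r=1$ shows $\rho(a)=s_a(e)\in P$ for all $a\in G$. Hence $P=\Im\rho$, and likewise $P'=\Im\rho'$, so that $\psi|_P$ and $\psi'|_{P'}$ are exactly the maps $\psi|_{\Im\rho}$ and $\psi'|_{\Im\rho'}$ appearing in the corollary. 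I would also record that $\fix(\psi,G)=\Ker\rho$, whence $|G|=|\Ker\rho|\cdot|\Im\rho|=|\fix(\psi,G)|\cdot|P|$ by the first isomorphism theorem, and similarly for $G'$.

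With these identifications in hand, applying Theorem~\ref{monosugoi} amounts to simplifying its conditions (A)--(C). Condition (A) is literally $|G|=|G'|$. Condition (B) is then redundant: once (A) holds and (C) provides a group isomorphism $P\cong_\mathrm{gr}P'$, the relation $|\fix(\psi,G)|=|G|/|P|$ forces $|\fix(\psi,G)|=|\fix(\psi',G')|$. In condition (C), a group isomorphism $h:P\to P'$ is precisely a group isomorphism $\Im\rho\to\Im\rho'$, and (C-1) reads $h\circ\psi|_{\Im\rho}=\psi'|_{\Im\rho'}\circ h$. Finally (C-2) is automatic: for $a\in G$ we have $s_a(e)=\rho(a)$, so $h(s_a(e))\in\Im\rho'$, and every element of $\Im\rho'$ equals $\rho'(a')=s_{a'}'(e')$ for a suitable $a'\in G'$. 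Hence Theorem~\ref{monosugoi} collapses exactly to the statement of the corollary.

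The only genuinely delicate point is the first step, namely checking that \ref{G-normal} and \ref{iikanji} hold for every generalized Alexander quandle of a finite abelian group, so that Theorem~\ref{monosugoi} is applicable with no extra hypotheses. Everything after that is the routine computation $P=\Im\rho$ together with the observation that conditions (B) and (C-2) become vacuous in the abelian case.
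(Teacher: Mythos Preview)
Your proposal is correct and follows essentially the same route as the paper: apply Theorem~\ref{monosugoi}, identify $P=\Im\rho$, note that \ref{G-normal} is immediate from abelianness, show that (C-2) is automatic because $h(s_a(e))\in\Im\rho'$, and derive (B) from (A) and (C) via $|\fix(\psi,G)|=|G|/|\Im\rho|$. The one point you flag as ``delicate'' but do not spell out---verifying \ref{iikanji}---is dispatched in the paper by the one-line observation that $P^2=\Im(\rho|_P)=\rho(\Im\rho)=\Im\rho^2=\{\rho(p):p\in P\}=\{s_p(e):p\in P\}$.
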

Corollary~\ref{cor1} gives a group-theoretic interpretation of \cite[Theorem 2.1]{Nelson}. 
See Section~\ref{sec:relation} for more details. 

\begin{cor}\label{cor2}
Let $D_n$ be a dihedral group of order $2n$, let $a,a' \in C_n^\times$ and let $b,b' \in C_n$.
Put $d=\gcd(n,1-a,b)$ and $d'=\gcd(n,1-a',b')$.
Then $Q(D_n,\varphi_{a,b}) \cong_\mathrm{qu} Q(D_n,\varphi_{a',b'})$ holds if and only if the following three conditions are satisfied: 
\begin{itemize}
\item $|\fix(\varphi_{a,b},D_n)|= |\fix(\varphi_{a',b'},D_n)|$; 
\item $d=d'$; 
\item $a \equiv a' \pmod{\frac{n}{d}}$. 
\end{itemize}
\end{cor}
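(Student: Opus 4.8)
The plan is to derive the corollary from Theorem~\ref{monosugoi} applied with $G=G'=D_n$. Write $D_n=\langle r\rangle\rtimes\langle t\rangle$ with $r^n=t^2=e$ and $trt^{-1}=r^{-1}$, so that $\varphi_{a,b}$ is the automorphism $r\mapsto r^a,\ t\mapsto r^b t$. First I would pin down $P:=P(Q(D_n,\varphi_{a,b}))$. Recall that $P$ is the subgroup of $D_n$ generated by $\{s_g(e):g\in D_n\}$; a direct computation from the definition of the generalized Alexander quandle gives $s_g(e)=g\,\varphi_{a,b}(g)^{-1}$, hence $s_{r^i}(e)=r^{i(1-a)}$ and $s_{r^it}(e)=r^{i(1-a)-b}$, and therefore $P=\langle r^{d}\rangle$ with $d=\gcd(n,1-a,b)$, a cyclic group of order $n/d$ lying inside the normal subgroup $\langle r\rangle$. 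Since $\langle r^d\rangle$ is the unique subgroup of $\langle r\rangle$ of its order, it is characteristic in $\langle r\rangle$, hence normal in $D_n$: condition \ref{G-normal} holds, and one checks directly that \ref{iikanji} holds as well, so Theorem~\ref{monosugoi} applies. Identifying $\langle r^d\rangle$ with $\ZZ/(n/d)\ZZ$ via $r^d\leftrightarrow 1$, the restriction $\varphi_{a,b}|_P$ becomes multiplication by $a$ (a unit, since $\gcd(a,n)=1$), and $\{s_g(e):g\in D_n\}$ becomes the union $\langle r^m\rangle\cup\langle r^m\rangle r^{-b}$ of the subgroup $\langle r^m\rangle\cong(m/d)\ZZ/(n/d)\ZZ$, where $m=\gcd(n,1-a)$, with one of its cosets.

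Next I would read off conditions (A)--(C) of Theorem~\ref{monosugoi}. Condition (A) is automatic. Condition (B), $|\fix(\varphi_{a,b},D_n)|=|\fix(\varphi_{a',b'},D_n)|$, is exactly the first bullet. For (C): a group isomorphism $P=\langle r^d\rangle\to P'=\langle r^{d'}\rangle$ exists if and only if $n/d=n/d'$, i.e.\ $d=d'$ (the second bullet); granting this, every such isomorphism $h$ is multiplication by a unit $e$ of $\ZZ/(n/d)\ZZ$. Since $\varphi_{a,b}|_P$ and $\varphi_{a',b'}|_{P'}$ are multiplication by $a$ and by $a'$, condition (C-1) reads $ea\equiv a'e\pmod{n/d}$, and cancelling the unit $e$ turns this into $a\equiv a'\pmod{n/d}$ — the third bullet — which moreover then holds for \emph{every} choice of the unit $e$.

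The real work is condition (C-2), which asks for an isomorphism $h$ (equivalently a unit $e$) sending $\{s_g(e):g\in D_n\}$ into $\{s'_{g'}(e'):g'\in D_n\}$. I would first record two elementary facts: (i) $\langle r^m\rangle\cup\langle r^m\rangle r^{-b}$ equals all of $P=\langle r^d\rangle$ exactly when $m\in\{d,2d\}$; and (ii) $m\in\{d,2d\}$ is equivalent to $|\fix(\varphi_{a,b},D_n)|=2d$ (counting fixed points of $\varphi_{a,b}$ on $\langle r\rangle$ and on $\langle r\rangle t$ gives $|\fix(\varphi_{a,b},D_n)|=2m$ if $m=d$ and $=m\ (\geq 2d)$ if $d<m$). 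Now split into cases. If $|\fix(\varphi_{a,b},D_n)|=|\fix(\varphi_{a',b'},D_n)|=2d$, then both sets $\{s_g(e)\}$ and $\{s'_{g'}(e')\}$ are the whole group $P=P'$, so (C-2) holds for \emph{any} $h$ and we are done by the previous paragraph. Otherwise the first bullet together with $d=d'$ forces $m=m'$; writing $h$ as multiplication by $e$, it sends $\langle r^m\rangle\cup\langle r^m\rangle r^{-b}$ to $\langle r^m\rangle\cup\langle r^m\rangle r^{-eb}$, which matches $\langle r^m\rangle\cup\langle r^m\rangle r^{-b'}$ precisely when $eb\equiv b'\pmod{m}$. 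Since $\gcd(m,b)=\gcd(m,b')=d$, the numbers $b/d$ and $b'/d$ are units mod $m/d$, so this is the single congruence $e\equiv (b'/d)(b/d)^{-1}\pmod{m/d}$ on $e$; because the reduction map $(\ZZ/(n/d)\ZZ)^\times\to(\ZZ/(m/d)\ZZ)^\times$ is surjective, a unit $e$ of $\ZZ/(n/d)\ZZ$ with this property exists, giving (C-2). Conversely, if $Q(D_n,\varphi_{a,b})\cong_\mathrm{qu} Q(D_n,\varphi_{a',b'})$, then Theorem~\ref{monosugoi} hands us (A)--(C), whence the three bullets follow at once from (B), from $P\cong_\mathrm{gr} P'$, and from (C-1).

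I expect the main obstacle to be exactly this last step: noticing that (C-2) is genuinely nonvacuous, setting up the right dichotomy in terms of the fixed-point count $|\fix(\varphi_{a,b},D_n)|$, and running the unit-lifting argument so that (C-2) is met simultaneously with (C-1); the surrounding bookkeeping (the description of $P$, the formula for $|\fix(\varphi_{a,b},D_n)|$ in terms of $m$ and $d$, and the verification of \ref{iikanji}) is routine.
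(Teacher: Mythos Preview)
Your proof is correct and follows the paper's overall strategy: verify \ref{G-normal} and \ref{iikanji} for $Q(D_n,\varphi_{a,b})$, then translate conditions (A)--(C) of Theorem~\ref{monosugoi} into the three bullets. One small slip: \ref{G-normal} concerns $P^2$, not $P$; but your argument applies verbatim, since $P^2\subset P\subset\langle r\rangle$ and every subgroup of the cyclic group $\langle r\rangle$ is normal in $D_n$.

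The genuine difference from the paper is in the verification of (C-2). The paper fixes $h=\id_{C_{n/d}}$ and then, splitting into cases according to whether $d=g$ and $d'=g'$ (with $g=\gcd(n,1-a)$, $g'=\gcd(n,1-a')$), exhibits for each $x\in D_n$ an explicit $x'$ with $s_x(e)=s'_{x'}(e)$. You instead let $h$ be multiplication by a free unit $e\in(\ZZ/(n/d)\ZZ)^\times$ and solve for $e$: since (C-1) reduces to $a\equiv a'\pmod{n/d}$ independently of $e$, the two conditions decouple, and (C-2) is either vacuous (when $|\fix|=2d$, so that $\{s_g(e)\}=P$) or the single congruence $e\equiv (b'/d)(b/d)^{-1}\pmod{m/d}$, solvable by surjectivity of $(\ZZ/(n/d)\ZZ)^\times\to(\ZZ/(m/d)\ZZ)^\times$. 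This is cleaner; in particular, the paper's displayed identity $\sigma^{(-1)^\epsilon(1-a')z_0i+\epsilon b}=s'_{x'}(e)$ in its ``first case'' tacitly needs $b\equiv b'\pmod{g}$, which is not among the hypotheses (take $n=8$, $a=a'=1$, $b=1$, $b'=3$), so your freedom in the choice of $e$ is doing real work that the fixed choice $h=\id$ cannot.
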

See Section~\ref{sec:D_n} for more details. 

\begin{cor}\label{cor3}
Let $n$ be a positive integer and fix $a \in C_{2n}^\times$. Let $a=2k+1$ for some nonnegative integer $k$, let $g=\gcd(k,n)$ and let $\tilde{a}=a-\lfloor \frac{a}{n} \rfloor n$. 
Then we have a quandle isomorphism $Q(C_{2n}, a) \cong_\mathrm{qu} Q(D_n,\varphi_{\tilde{a},g})$. 
\end{cor}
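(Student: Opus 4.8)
The plan is to verify the hypotheses of Theorem~\ref{monosugoi} for the two generalized Alexander quandles $Q(C_{2n},a)$ and $Q(D_n,\varphi_{\tilde a,g})$, and then to exhibit the data (A), (B), (C) explicitly. First I would record the relevant structural facts: $C_{2n}$ is abelian, so $Q(C_{2n},a)$ automatically satisfies \ref{G-normal}, and one checks \ref{iikanji} directly (for abelian groups the point symmetries are affine maps $x \mapsto ax + (1-a)y$, so $P(Q(C_{2n},a)) = \Im(\id - a) = (1-a)C_{2n}$, and \ref{iikanji} should reduce to a statement about this subgroup). On the dihedral side I would use the description of $\varphi_{a,b}$ and the computation of $P(Q(D_n,\varphi_{\tilde a,g}))$ that is implicit in Corollary~\ref{cor2}'s setup; here the parameter $d = \gcd(n, 1-\tilde a, g)$ governs the orbit $P$, and I expect $P$ to be a cyclic subgroup of the rotation part $C_n$ of order $n/d$. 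The point of the hypotheses is that both quandles are ``nice enough'' that Theorem~\ref{monosugoi} applies, so this step is bookkeeping with the definitions in Section~\ref{sec:PandP2}.

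Next I would check conditions (A) and (B). Condition (A) is immediate: $|C_{2n}| = 2n = |D_n|$. For (B) I need $|\fix(a, C_{2n})| = |\fix(\varphi_{\tilde a, g}, D_n)|$. The left side is $\{x \in C_{2n} : ax = x\} = \{x : (a-1)x = 0\}$, which has order $\gcd(a-1, 2n) = \gcd(2k, 2n) = 2\gcd(k,n) = 2g$. On the right, $\fix(\varphi_{\tilde a, g}, D_n)$ splits into a rotation part and a reflection part; the rotation part has order $\gcd(\tilde a - 1, n)$ and the reflection part contributes when the affine equation on the reflection coset is solvable. I would compute both and confirm the total is $2g$; the choice $g = \gcd(k,n)$ and $\tilde a = a - \lfloor a/n \rfloor n$ is engineered precisely to make this balance, so this is a finite gcd computation, with a small case analysis according to the parity of $\lfloor a/n \rfloor$.

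The substantive step is (C): constructing the group isomorphism $h : P \to P'$ with (C-1) and (C-2). Since I expect $P = (1-a)C_{2n}$ to be cyclic of some order $m$ and $P'$ to be a cyclic subgroup of $C_n \le D_n$ of the same order $m$ (this equality of orders follows once (A) and (B) hold together with \ref{iikanji}, via the general theory, or can be checked by hand: $m = 2n/|P|$-type identities), the isomorphism $h$ is essentially forced up to a unit, and I would pin down the correct unit so that (C-1) holds, i.e. so that $h$ intertwines the restriction of multiplication-by-$a$ on $P$ with the restriction of $\varphi_{\tilde a, g}$ on $P'$. Because $\varphi_{\tilde a, g}$ acts on the rotation subgroup $C_n$ as multiplication by $\tilde a$, and $\tilde a \equiv a \pmod{n}$ by construction, (C-1) should come down to a congruence $a \equiv \tilde a \pmod{n/d}$ analogous to the third bullet of Corollary~\ref{cor2}. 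Condition (C-2) requires that $h$ sends each $s_c(e) = (1-a)c \in P$ to some $s_{c'}'(e')$; since the elements $s_{c'}'(e')$ for $c'$ ranging over $D_n$ generate $P'$ and in the dihedral case these are computed from $\varphi_{\tilde a, g}$ applied to rotations and reflections, I would check that the image of a generator of $P$ under $h$ lies among these, which again reduces to a divisibility matching $d$ against $g$ and $1 - \tilde a$.

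The main obstacle I anticipate is the careful bookkeeping around the two cases for $\lfloor a/n \rfloor$ (equivalently, whether the reduction from $a$ to $\tilde a$ crosses the modulus $n$ an even or odd number of times), since this affects both the reflection-part contribution to $|\fix(\varphi_{\tilde a,g},D_n)|$ in (B) and the precise form of the elements $s_{c'}'(e')$ in (C-2); getting the constant $g = \gcd(k,n)$ to line up with $\gcd(n, 1-\tilde a, g)$ and with the fixed-point count simultaneously is the delicate point, and I would handle it by a direct parametrization of elements of $D_n$ as $r^i$ and $r^i f$ and explicit evaluation of $\varphi_{\tilde a,g}$ on each. Once these gcd identities are verified, assembling $h$ and invoking Theorem~\ref{monosugoi} gives the claimed quandle isomorphism.
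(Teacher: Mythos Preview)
Your plan matches the paper's proof: invoke Theorem~\ref{monosugoi} after noting both quandles satisfy \ref{G-normal} and \ref{iikanji} (the abelian and dihedral cases having been established in the proofs of Corollaries~\ref{cor1} and~\ref{cor2}), then verify (A), (B), (C) with the natural isomorphism $h : \langle 2g \rangle \to \langle \sigma^g \rangle$, $2gj \mapsto \sigma^{gj}$, between the cyclic groups $P \cong_\mathrm{gr} C_{n/g} \cong_\mathrm{gr} P'$.

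One correction on where the case split actually lies. The value of $\lfloor a/n \rfloor$ is immaterial: everything on the $D_n$ side depends only on $\tilde a \bmod n$, and $\tilde a \equiv a \pmod n$ regardless of that floor. The genuine dichotomy the paper uses is whether $g' := \gcd(n,1-\tilde a) = \gcd(n,2k)$ equals $g$ or $2g$. For (B), one computes $d' = \gcd(g',g) = g$, and Proposition~\ref{prop:D_n_Fix} then gives $|\fix(\varphi_{\tilde a,g},D_n)| = 2g$ in both cases (as $2g'$ when $g'=g=d'$, and as $g'$ when $g'=2g\neq d'$). For (C-2), when $g'=g$ a rotation $x' = \sigma^{z_0 i}$ suffices to realize $h(s_i(e))=\sigma^{-ki}$ as $s'_{x'}(e')$, whereas when $g'=2g$ one needs $n$ even and must take $x' = \tau^{\mu(\ell)}\sigma^{z_1}$ with $\ell = ki/g$, using the reflection part to absorb the extra factor of~$2$. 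So the delicate point is the arithmetic relation between $g$ and $g'$, not the reduction of $a$ modulo $n$.
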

This corollary means that any \textit{linear} Alexander quandle of cardinality $2n$ can be realized 
as a generalized Alexander quandle $Q(D_n,\varphi_{c,d})$ for some certain $c \in C_n^\times$ and $d \in C_n$. 
See Section~\ref{sec:relation} for more details.

\bigskip

The third main result of this paper is a complete classification of generalized Alexander quandles up to quandle isomorphism for finite groups with small orders. 
Namely, we consider the problem of the determination of $\Qc_{\GA}(n)$ for each $n$'s, where 
\begin{align*}
\Qc_{\GA}(n):=&\{Q(G,\psi) : \text{$G$ is a group of order $n$}, \; \psi\in \gauto(G)\}/\cong_\mathrm{qu} \\
=&\bigcup_{\text{$G$ is a group of order $n$}}\Qc(G).
\end{align*}
\begin{thm}
    The complete list of $|\Qc_{\GA}(n)|$ for $n \leq 15$ is given in Table~\ref{tab:GA}. 
\end{thm}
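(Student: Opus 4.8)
The statement records the outcome of a finite computation, so the plan is to determine $\Qc(G)$ for every group $G$ with $|G|=n\le 15$ and then, for each $n$, to decide which of the resulting classes coincide as quandles across non-isomorphic groups of that order. Because the underlying set of $Q(G,\psi)$ is $G$, the cardinality $|G|$ is a quandle-isomorphism invariant, so the second step only ever compares quandles built on groups of the same order; and the list of small groups is short, with prime orders contributing only $C_p$ and the only orders supporting several groups being $4,6,8,9,10,12,14$.

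Most of the groups are handled by the two main theorems. For $G=C_p$ of prime order, $G$ is simple, so Theorem~\ref{simple} gives $|\Qc(C_p)|$ equal to the number of conjugacy classes of $\gauto(C_p)\cong C_{p-1}$, which is $p-1$. For abelian $G$, Corollary~\ref{cor1} reduces the count to isomorphism classes of the pair $\bigl(\Im(\id_G-\psi),\,\psi|_{\Im(\id_G-\psi)}\bigr)$ as $\psi$ ranges over $\gauto(G)$; this disposes of $C_4$, $C_2\times C_2$, $C_6$, $C_8$, $C_4\times C_2$, $C_2\times C_2\times C_2$, $C_9$, $C_3\times C_3$, $C_{10}$, $C_{12}$, $C_6\times C_2$, $C_{14}$, and $C_{15}$. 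For dihedral $G=D_n$, Corollary~\ref{cor2} enumerates $\Qc(D_n)$ through the triple $\bigl(|\fix(\varphi_{a,b},D_n)|,\,d,\,a\bmod(n/d)\bigr)$, which covers $D_3=\Sf_3$, $D_4$, $D_5$, $D_6$, and $D_7$. (These corollaries already contain the verification that the relevant quandles satisfy \ref{G-normal} and \ref{iikanji}; any small-order quandle failing these is treated separately below.)

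The groups not reached by the above are the non-abelian, non-dihedral ones of order at most $15$, namely $Q_8$ (equivalently $\Dic_2$) of order $8$, and $\Af_4$ and $\Dic_3$ of order $12$. For each such $G$ and each conjugacy-class representative $\psi\in\gauto(G)$ I would compute the invariant $P=P(Q(G,\psi))$ directly from \eqref{P}, and use $|G|$, $|\fix(\psi,G)|$, the isomorphism type of the group $P$, and---if these still do not separate---the isomorphism type of $\qinn(Q)$ as distinguishing invariants, constructing explicit quandle isomorphisms by hand in any remaining ambiguous case. $\Af_4$ is the instructive one: it is not simple, so Theorem~\ref{simple} does not apply, and one must genuinely check whether distinct conjugacy classes of $\gauto(\Af_4)\cong\Sf_4$ can yield isomorphic quandles.

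Finally, for each $n$ I would merge the lists $\Qc(G)$ over all $G$ of order $n$ and delete duplicates. The unavoidable collision is the trivial quandle $Q(G,\id_G)$, which is the same $n$-element quandle for every $G$ of order $n$; any further coincidences between quandles coming from non-isomorphic groups must be certified by the invariants above (for instance, that no $Q(D_4,\psi)$ is isomorphic to any $Q(Q_8,\psi')$, and that the quandles of $\Af_4$, $\Dic_3$, $D_6$, $C_{12}$, $C_6\times C_2$ do not collide beyond the trivial one). The main obstacle is exactly this cross-group deduplication together with the three exceptional groups $Q_8$, $\Af_4$, $\Dic_3$; I expect to settle both by the direct computation of $P$ and its group structure, and to corroborate the final table with an independent computer enumeration of these finitely many quandles and their automorphism groups.
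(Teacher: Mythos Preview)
Your overall plan---enumerate $\Qc(G)$ for each group of order $n$ using Theorem~\ref{simple}, Corollary~\ref{cor1}, and Corollary~\ref{cor2}, then merge across groups---matches the paper's. The gap is in the merging step, and it is a real one.

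You write that beyond the trivial quandle ``no $Q(D_4,\psi)$ is isomorphic to any $Q(Q_8,\psi')$'' and that the five groups of order $12$ ``do not collide beyond the trivial one''. This expectation is false. For $n=8$ there are many non-trivial cross-group coincidences: for instance $Q(D_4,\varphi_{3,1})\cong_{\mathrm{qu}} Q(Q_8,\psi_3)$, $Q(D_4,\varphi_{1,1})\cong_{\mathrm{qu}} Q(Q_8,\psi_5)$, and several abelian quandles collapse with non-abelian ones; if you only remove the trivial-quandle duplicates you would overcount $|\Qc_{\GA}(8)|$ by roughly a factor of two. The situation at $n=12$ is similar: $Q(C_6\times C_2,\alpha_\tau)\cong_{\mathrm{qu}} Q(D_6,\varphi_{5,1})\cong_{\mathrm{qu}} Q(\Dic_3,\beta_\tau\circ\beta_\sigma)$, and so on.

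The deeper issue is methodological. The invariants you list---$|G|$, $|\fix(\psi,G)|$, the group $P$, even $\qinn(Q)$---are only necessary conditions; when they agree for quandles built on non-isomorphic groups, you still need a way to \emph{confirm} the isomorphism. The paper's mechanism for this is precisely Theorem~\ref{monosugoi}: after checking that the quandles in question satisfy \ref{G-normal} and \ref{iikanji}, the conditions (A), (B), (C) become an if-and-only-if criterion, so matching the data $(|G|,|\fix|,P,\psi|_P)$ together with (C-2) \emph{produces} the isomorphism rather than merely failing to rule it out. Your proposal invokes Theorem~\ref{monosugoi} only implicitly through its corollaries, and for the cross-group step you fall back on ``by hand'' or computer enumeration; that would work in principle, but it is exactly the labour that Theorem~\ref{monosugoi} is designed to eliminate, and without it your stated expectations would lead you to the wrong table.
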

\begin{table}[tbh]
\begin{center}
\begin{tabular}{rccccccccccccccc}
\toprule
$n$ & 1 & 2 & 3 & 4 & 5 & 6 & 7 & 8 & 9 & 10 & 11 & 12 & 13 & 14 & 15\\
$|\Qc_{\GA}(n)|$ & 1 & 1 & 2 & 3 & 4 & 3 & 6 & 9 & 11 & 5 & 10& 11 & 12& 7& 8\\
\bottomrule
\end{tabular}

\medskip

\caption{List of $|\Qc_{\GA}(n)|$}\label{tab:GA}
\end{center}
\end{table}

The results in the cases $n=2,3,5,7,11,13$ come from Subsection~\ref{sec:p}, the results in the cases $n=6,10,14$ come from Subsection~\ref{sec:2p} 
and the results in the cases $n=4,9$ come from Subsection~\ref{sec:p^2}. 
The cases $n=8$, $n=12$ and $n=15$ are discussed in Subsections~\ref{sec:8}, \ref{sec:12} and \ref{sec:15}, respectively. 
Finally, we mention in Subsection~\ref{sec:16} that Theorem~\ref{monosugoi} is not available in the case  $n=16$. 

\bigskip

\subsection{Structures of this paper}

An organization of this paper is as follows. In Section~\ref{sec:notation}, we collect the notions and the notation 
and recall some previous results, which will us throughout the paper.
In Section~\ref{sec:proofs}, we give our proofs of Theorem~\ref{simple}, Corollary~\ref{cor:S_n} and Theorem~\ref{monosugoi}. 
As applications of Theorem~\ref{monosugoi}, we give the proof of Corollary~\ref{cor2} in Section~\ref{sec:D_n} 
and the proofs of Corollaries~\ref{cor1} and \ref{cor3} in Section~\ref{sec:relation}. 
In Section~\ref{sec:group}, we perform a complete classification of generalized Alexander quandles of groups with their orders at most $15$. 

\bigskip

\begin{acknowledgement}
The first named author is partially supported by JSPS Grant-in-Aid for Scientists Research (C) $\sharp$20K03513 
and the second named author is partially supported by JSPS Grant-in-Aid for Scientists Research (C) $\sharp$20K03623. 
\end{acknowledgement}

\bigskip


\section{Terminologies and notation}\label{sec:notation}

\subsection{Terminologies for quandles}
Throughout this paper, we denote by $(Q,s)$ (or $Q$, for short) 
a quandle $Q$ equipped with the maps $\{s_x : x \in Q\}$ satisfying (Q1')--(Q3') in Introduction.

Let $(Q,s)$ and $(Q',s')$ be quandles. 
A map $f:Q \rightarrow Q'$ is called a {\em quandle homomorphism} if $f$ satisfies 
$$f \circ s_x = s_{f(x)}' \circ f \;\; \text{ for any } x \in Q.$$
Moreover, $f$ is said to be a {\em quandle isomorphism} when $f$ is bijective. 
If there is a quandle isomorphism between $Q$ and $Q'$, then we say that $Q$ and $Q'$ are {\em isomorphic as quandles}, 
denoted by $Q \cong_\mathrm{qu} Q'$.

We collect the notation on quandles used in the paper: 
\begin{itemize}
\item Let $\qauto(Q,s)$ (or $\qauto(Q)$, for short) be the set of all quandle automorphisms from $(Q,s)$ to $(Q,s)$ itself, 
which is called the {\em automorphism group} of $Q$. 
Note that $s_x \in \qauto(Q)$ for any $x \in Q$. 
\item Let $\qinn(Q,s)$ (or $\qinn(Q)$, for short) be the subgroup of $\qauto(Q)$ generated by $\{s_x : x \in Q\}$, which is called the {\em inner automorphism group} of $Q$. 
It is easy to see that $\qinn(Q)$ is a normal subgroup of $\qauto(Q)$. 
\item For a quandle $Q$, we call $Q$ \textit{homogeneous} if for any $x,y \in Q$, there exists a quandle automorphism $f$ on $Q$ such that $f(x)=y$. 
This means that $\qauto(Q)$ acts transitively on $Q$. 
\item For a quandle $Q$, we call $Q$ \textit{connected} if for any $x,y \in Q$, there exists a quandle inner automorphism $f$ on $Q$ such that $f(x)=y$. 
This means that $\qinn(Q)$ acts transitively on $Q$.
\item For a quandle $(Q,s)$, let $\ord(s_x)=\min\{n \in \ZZ_{>0} : \underbrace{s_x \circ \cdots \circ s_x}_n=\id\}$. 
For a homogeneous quandle $Q$, we see that $\ord(s_x)$ is constant for any $x \in Q$. 
In fact, for any $x,y \in Q$, since there is $f \in \qauto(Q)$ such that $y=f(x)$, we see that 
if $s_x^m=\id$, then $f \circ s_x^m=s_{f(x)}^m \circ f$, so we obtain $s_{f(x)}^m=\id$, as required. 
Thus, we use the notation $\ord(Q)$ instead of $\ord(s_x)$ for $x \in Q$ in the case $Q$ is homogeneous. 
\end{itemize}

\subsection{Terminologies for groups}\label{sec:Term_group}

Let $G,G'$ be groups with its units $e,e'$, respectively. 
We denote by $G \cong_\mathrm{gr} G'$ if $G$ and $G'$ are isomorphic as groups.

We collect the notation on groups used in the paper: 
\begin{itemize}
\item We denote $H<G$ (resp. $H \triangleleft G$) if $H$ is a subgroup (resp. normal subgroup) of $G$. 
\item Let $\gauto(G)$ denote the automorphism group of $G$. 
\item Let $\ginn(G)$ denote the inner automorphism group of $G$, i.e., the set of the group automorphisms 
defined by $x \mapsto axa^{-1}$ for each $a \in G$, denoted by $(\cdot)^a$. Note that $\ginn(G)$ is a normal subgroup of $\gauto(G)$. 
We call $\psi$ an \textit{outer automorphism} if $\psi \in \gauto(G) \setminus \ginn(G)$. 
\item Let $Z(G)=\{z \in G : xz=zx \text{ for any }x \in G\}$ denote the center of $G$. 
It is well known that $\ginn(G) \cong_\mathrm{gr} G/Z(G)$. 
\item Let $\ord_G(x)$ denote the order of $x \in G$. 
\item Given $\psi \in \gauto(G)$, let $$\fix(\psi,G)=\{x \in G : \psi(x)=x\}.$$ 
\item Given a subset $K$ of $G$, let $G/K=\{[x]:x \in G\}$ denote the set of left cosets with respect to $K$. 
\item Let $C_n$ be a cyclic group of order $n$. We set $C_n=\{0,1,\ldots,n-1\}$ as a set and use $+$ as an operation. 
We perform the addition in $C_n$ modulo $n$. 
\item Let $C_n^\times$ denote the multiplicative group of $C_n$, i.e., $C_n^\times=\{a \in C_n : \gcd(a,n)=1\}$. 
It is well known that $\gauto(C_n) \cong_\mathrm{gr} C_n^\times$. 
\item Let $D_n$ be a dihedral group of order $2n$. We will give our notation for dihedral groups in the beginning of Section~\ref{sec:D_n}. 
\item Let $\Sf_n$ (resp. $\Af_n$) be the symmetric group (resp. the alternating group) on $\{1,2,\ldots,n\}$. It is well known that $\Af_n$ is a normal subgroup of $\Sf_n$. 
\end{itemize}
For the terminologies or fundamental facts on group theory, consult, e.g., \cite{Roman, Rotman}.

\subsection{Quandle triplets and generalized Alexander quandles}\label{subsec:genAlex}

Let $G$ be a group and let $K$ be a subgroup of $G$. Take $\psi \in \gauto(G)$. 
We say that a triple $(G,K,\psi)$ is a \textit{quandle triplet} if $K \subset \fix(\psi,G)$. 
Given a quandle triplet $(G,K,\psi)$, we can construct a quandle $Q(G,K,\psi)$ as follows: 
$Q(G,K,\psi)=G/K$ as a set and we define $s_{[x]}$ for each $[x] \in G/K$ by setting 
$$s_{[x]}([y])=[x\psi^{-1}(x^{-1}y)] \;\;\text{for }[y] \in G/K.$$
It is proved in \cite{Joyce} that the set $G/K$ equipped with this point symmetry becomes a homogeneous quandle. 

For the relationship between homogeneous quandles and quandle triplets, the following is known: 
\begin{prop}[{see \cite[Section 3]{IT}}]
For any homogeneous quandle $Q$, there is a quandle triplet $(G,K,\psi)$ such that $Q \cong_\mathrm{qu} Q(G,K,\psi)$. 
\end{prop}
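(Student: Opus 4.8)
The plan is to realize the homogeneous quandle $Q$ explicitly as a coset quandle by taking the group to be $\qauto(Q)$, the subgroup to be a point stabilizer, and the automorphism to be conjugation by the point symmetry at a fixed base point. Concretely, fix a base point $x_0 \in Q$ and set $G := \qauto(Q)$, which acts on $Q$ and acts transitively precisely because $Q$ is homogeneous. Let $K := \{g \in G : g(x_0) = x_0\}$ be the stabilizer of $x_0$, and let $\psi \in \gauto(G)$ be conjugation by $s_{x_0}^{-1}$, that is $\psi(g) = s_{x_0}^{-1} \circ g \circ s_{x_0}$; this is an (inner) automorphism of $G$ because $s_{x_0} \in \qauto(Q) = G$.

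The single structural identity I will use repeatedly is that every $f \in \qauto(Q)$ satisfies $f \circ s_x = s_{f(x)} \circ f$ for all $x \in Q$, equivalently $f \circ s_x \circ f^{-1} = s_{f(x)}$; this is just the definition of a quandle automorphism. First I would check that $(G,K,\psi)$ is a quandle triplet, i.e.\ $K \subseteq \fix(\psi,G)$. For $g \in K$ we have $g(x_0) = x_0$, so the identity gives $g \circ s_{x_0} \circ g^{-1} = s_{g(x_0)} = s_{x_0}$; hence $g$ commutes with $s_{x_0}$, and therefore $\psi(g) = s_{x_0}^{-1} g s_{x_0} = g$. Thus $K \subseteq \fix(\psi,G)$, and the construction $Q(G,K,\psi)$ of Subsection~\ref{subsec:genAlex} is available.

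Next I would produce the isomorphism. Define $\Phi : G/K \to Q$ by $\Phi([g]) = g(x_0)$. This is well defined and injective by the orbit--stabilizer principle, and surjective by homogeneity, so it is a bijection of sets. It remains to check that $\Phi$ intertwines the point symmetries, i.e.\ $\Phi(s_{[g]}([h])) = s_{\Phi([g])}(\Phi([h]))$ for all $g,h \in G$. Unwinding the defining formula $s_{[g]}([h]) = [g\,\psi^{-1}(g^{-1}h)]$ and using $\psi^{-1}(f) = s_{x_0} f s_{x_0}^{-1}$, the left-hand side equals $\bigl(g\, s_{x_0}\, g^{-1}\, h\, s_{x_0}^{-1}\bigr)(x_0)$. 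Since (Q1') gives $s_{x_0}(x_0)=x_0$ and hence $s_{x_0}^{-1}(x_0)=x_0$, this collapses to $\bigl(g\,s_{x_0}\,g^{-1}\bigr)(h(x_0)) = s_{g(x_0)}(h(x_0))$ by the automorphism identity, which is exactly the right-hand side $s_{\Phi([g])}(\Phi([h]))$. Hence $\Phi$ is a quandle isomorphism and $Q \cong_\mathrm{qu} Q(G,K,\psi)$.

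The computations are all short, so the main obstacle is conceptual bookkeeping rather than difficulty: one must fix the direction of the conjugation defining $\psi$ (here conjugation by $s_{x_0}^{-1}$ rather than by $s_{x_0}$) so that the inverse-$\psi$ appearing in the coset-quandle formula for $s_{[x]}$ cancels correctly against the automorphism identity; the opposite choice produces $s_{g(x_0)}^{-1}$ in place of $s_{g(x_0)}$ and fails. The only other point requiring care is confirming that conjugation by $s_{x_0}$ really is an automorphism of the chosen group $G$, which is immediate once $G$ is taken to contain all the $s_x$ (e.g.\ $G = \qauto(Q)$).
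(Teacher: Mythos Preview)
Your proof is correct. The paper itself does not give a proof of this proposition; it merely states it with a citation to \cite[Section~3]{IT}, so there is no ``paper's own proof'' to compare against. Your argument is precisely the standard construction that appears in the cited reference (and in Joyce's original paper): take $G=\qauto(Q)$, $K$ the stabilizer of a base point $x_0$, and $\psi$ the inner automorphism given by conjugation by $s_{x_0}^{-1}$, then verify that the orbit map $[g]\mapsto g(x_0)$ is a quandle isomorphism. Your bookkeeping on the direction of conjugation is exactly right, and the key identity $g\circ s_{x_0}\circ g^{-1}=s_{g(x_0)}$ does all the work.
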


We notice that for a given a group automorphism $\psi \in \gauto(G)$, a triplet $(G,\{e\},\psi)$ trivially becomes a quandle triplet. 
Let $Q(G,\psi)=Q(G,\{e\},\psi)$, i.e., $Q(G,\psi)$ is the homogeneous quandle $(G,s)$, where $s$ is defined by 
$$s_x(y)=x\psi(x^{-1}y)\;\;\text{ for any }x,y \in G.$$ 
This quandle $Q(G,\psi)$ is known as a {\em generalized Alexander quandle}. 
Note that in the case $G$ is abelian, $Q(G,\psi)$ is called an {\em Alexander quandle} (see \cite{Nelson}).

Towards a solution of Problem \ref{toi}, 
we collect some invariants on generalized Alexander quandles developed in \cite{HK}. 
Let $\psi,\psi' \in \gauto(G)$ and let $Q=Q(G,\psi)$ and let $Q'=Q(G,\psi')$. 
The following assertions are claimed in \cite[Theorem 4.5]{HK}: 
\begin{enumerate}[label=$($\alph*$)$]
    \item If $\psi'=\psi^\tau$ for some $\tau \in \gauto(G)$, then $Q \cong_\mathrm{qu} Q'$. 
    \item If $Q \cong_\mathrm{qu} Q'$, then 
    \begin{itemize}
        \item $\qinn(Q) \cong_\mathrm{gr} \qinn(Q')$;
        \item $\ord_{\gauto(G)}\psi=\ord_{\gauto(G)}\psi'$; 
        \item $|\fix(\psi,G)|=|\fix(\psi',G)|$. 
        \end{itemize}        
\end{enumerate}

\begin{rem}
    \label{rem:GneqG'}
    In the above argument, we may obtain the similar results when the source group and the target group are different, i.e.,
    for $Q=Q(G,\psi)$ and $Q'=Q(G',\psi')$, if $Q \cong_\mathrm{qu} Q'$ then 
    \begin{itemize}
        \item $\qinn(Q) \cong_\mathrm{gr} \qinn(Q')$; 
        \item $\ord_{\gauto(G)}\psi=\ord_{\gauto(G')}\psi'$; 
        \item $|\fix(\psi,G)|=|\fix(\psi',G')|$.  
        \end{itemize}
\end{rem}

\bigskip


\section{Proofs of the main results}\label{sec:proofs}

In this section, we develop a new invariant $P$ (see \eqref{P}) on generalized Alexander quandles. 

Let $Q=Q(G,\psi)$ be a generalized Alexander quandle, where $G$ is a finite group and $\psi \in \gauto(G)$. 
Given $a \in G$, let $$L_a:G \rightarrow G\text{ defined by }L_a(x)=ax\text{ for }x \in Q.$$ 
Then we see that $L_a \in \qauto(Q)$. In fact, $L_a$ is a bijection on $Q$ since $Q=G$ as a set, and 
$$ L_a \circ s_x(y)=ax\psi(x^{-1}y)=ax\psi(x^{-1}a^{-1}ay)=s_{ax}(ay)=s_{L_a(x)} \circ L_a(y), $$
i.e., $L_a$ is a quandle homomorphism. Note that $L_a^{-1}=L_{a^{-1}}$.

\subsection{Normality for $P$ and $P^2$} 
Given a generalized Alexander quandle $Q=Q(G,\psi)$, let us recall the definition of $P(Q)$ given in Introduction. 
We discuss some fundamental properties of $P$.

\begin{lem}\label{lem_Lx}
    Let $x \in P$. 
    Then there exist $a_1,\ldots,a_r \in G$ such that $L_x = s_{a_1} \circ \cdots \circ s_{a_r}$.  
    \end{lem}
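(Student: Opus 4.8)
The statement asserts that for $x \in P$, the left-translation map $L_x$ can be written as a composite of point symmetries $s_{a_1} \circ \cdots \circ s_{a_r}$. The natural strategy is to prove this by induction on the length $r$ of an expression $x = s_{a_1} \circ \cdots \circ s_{a_r}(e)$ realizing $x$ as an element of $P = \qinn(Q)e$. First I would record the basic computation of how a single $s_a$ interacts with left translations. Writing $s_a(y) = a\psi(a^{-1}y)$, one computes $s_a = L_a \circ \psi \circ L_{a^{-1}} \circ (\text{something})$; more precisely, for the generalized Alexander quandle one has $s_a(y) = a\psi(a)^{-1}\psi(y)$, so $s_a = L_{a\psi(a)^{-1}} \circ \bar\psi$ where $\bar\psi$ denotes the map $y \mapsto \psi(y)$ acting on $Q$. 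In particular $s_a(e) = a\psi(a)^{-1}$ and, more usefully, $s_a = L_{s_a(e)} \circ \bar\psi$.

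\textbf{Key steps.} With that identity in hand, a composite $s_{a_1} \circ \cdots \circ s_{a_r}$ telescopes: each $s_{a_i} = L_{s_{a_i}(e)} \circ \bar\psi$, and pushing all the $\bar\psi$'s to the right past the left translations (using $\bar\psi \circ L_b = L_{\psi(b)} \circ \bar\psi$) collects the composite into the form $L_w \circ \bar\psi^{\,r}$ for a suitable $w \in G$. Applying both sides to $e$ shows $w = s_{a_1}\circ\cdots\circ s_{a_r}(e) = x$. Thus $s_{a_1}\circ\cdots\circ s_{a_r} = L_x \circ \bar\psi^{\,r}$, which is not quite $L_x$ yet — there is a leftover $\bar\psi^{\,r}$. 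To remove it, I would use that $\bar\psi$ itself is not an inner automorphism of the quandle in general, so instead the cleaner route is: since $x \in P$ and $P = \qinn(Q)e$, and $\qinn(Q)$ is a group, for $x$ there is an element $g \in \qinn(Q)$ with $g(e) = x$; write $g = s_{a_1}\circ \cdots \circ s_{a_r}$. Now also $x^{-1} \in P$ (as $P$ is closed under inversion — this follows because $\qinn(Q)$ is a group, so $g^{-1}(e) \in P$, but one must check $g^{-1}(e) = x^{-1}$, which does hold for generalized Alexander quandles since $e \mapsto x$ under $g$ forces, via the above telescoping, a symmetric relation). Then $L_x = L_{s_{a_1}\cdots s_{a_r}(e)}$, and one combines the two expressions $s_{a_1}\circ\cdots\circ s_{a_r} = L_x \circ \bar\psi^r$ with an analogous expression built from a realization of $x$ whose trailing $\bar\psi$-power can be cancelled by composing with further point symmetries fixing $e$ (note $s_e = \bar\psi$ since $s_e(y) = \psi(y)$, so $\bar\psi = s_e$ is itself a point symmetry!). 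That last observation is the crux: $\bar\psi = s_e$, hence $\bar\psi^{\,r} = s_e^{\,r}$, and therefore $L_x = (s_{a_1}\circ\cdots\circ s_{a_r}) \circ s_e^{-r} = (s_{a_1}\circ\cdots\circ s_{a_r}) \circ s_e^{\,m}$ for $m = \ord(s_e) - r \bmod \ord(s_e)$, which is again a composite of point symmetries. Relabeling gives the claimed $L_x = s_{b_1}\circ\cdots\circ s_{b_t}$.

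\textbf{Main obstacle.} The only real subtlety is the bookkeeping in the telescoping step — correctly tracking how the powers of $\bar\psi$ accumulate and commute past the $L_b$'s — together with the realization that one should use $s_e = \bar\psi$ to absorb the residual automorphism $\bar\psi^{\,r}$ into additional point symmetries (legitimate since $\psi$ has finite order, as $G$ is finite, so $\bar\psi^{-1}$ is a positive power of $s_e$). I expect no deep difficulty beyond this; once the identity $s_a = L_{s_a(e)}\circ s_e$ and its consequence $s_{a_1}\circ\cdots\circ s_{a_r} = L_x \circ s_e^{\,r}$ are established, the conclusion is immediate. One should double-check the direction of the conjugation relation $s_e \circ L_b = L_{\psi(b)} \circ s_e$ so that the product collapses to $L_x$ on the correct side.
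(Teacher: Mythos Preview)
Your proposal is correct and uses essentially the same idea as the paper: both rest on the identity $s_{a_1}\circ\cdots\circ s_{a_r} = L_x \circ \psi^r$ (where $x = s_{a_1}\circ\cdots\circ s_{a_r}(e)$) together with the observation $s_e = \psi$, so the residual $\psi^r$ can be absorbed by extra copies of $s_e$. The paper simply organizes this more tersely by padding the expression for $x$ with $s_e$'s at the outset so that $r \equiv 0 \pmod{\ord_{\gauto(G)}\psi}$, whence $\psi^r = \id$ and the composite equals $L_x$ directly; your detour through $x^{-1}$ is unnecessary but harmless.
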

    \begin{proof}
    In the definition of $P$,
    we can add the assumption $r \equiv 0 \pmod{\ord_{\gauto(G)}\psi}$
    since $s_{a_1} \circ \cdots \circ s_{a_t}(e)=s_{a_1} \circ \cdots \circ s_{a_t} \circ s_e^m(e)$ holds for any $m \in \ZZ$. 
    Hence, for $x \in P$, there exist $a_1,\ldots ,a_r \in G$ such that $x=s_{a_1} \circ \cdots \circ s_{a_r}(e)$, where we let $r \equiv 0 \pmod{\ord_{\gauto(G)}\psi}$. 
    Thus, for any $y \in G$, we see the following: 
    \begin{align*}
    L_x(y)=xy =s_{a_1} \circ \cdots \circ s_{a_r}(e)\cdot \psi^r(y) =s_{a_1} \circ \cdots \circ s_{a_r}(y). 
    \end{align*}
    \end{proof}

In what follows, for a map $f : A \rightarrow B$ and a subset $A' \subset A$, let $f|_{A'}$ denote the restriction of $f$ into $A'$. 

The following proposition claims a property on $P$ as quandles. 
\begin{prop}\label{prop_P}
{\em (i)} $P$ becomes a subquandle of $Q(G,\psi)$. \\
{\em (ii)} $\psi|_P \in \qauto(P)$. 
\end{prop}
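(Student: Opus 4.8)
The plan is to exploit the quandle automorphisms $L_a$ introduced just above, together with Lemma~\ref{lem_Lx}, to see that $P$ is closed under the quandle operation and that $\psi$ permutes it bijectively. For part (i), I must show that for $x,y\in P$ one has $s_x(y)\in P$ and that $(Q2')$ holds internally, i.e. the unique preimage $s_x^{-1}(y)$ also lies in $P$. The key observation is that $P=\qinn(Q)e$ is by definition an orbit of $e$ under the group $\qinn(Q)$, so it suffices to show that each $s_x$ with $x\in P$ maps $\qinn(Q)e$ into itself. By Lemma~\ref{lem_Lx}, for $x\in P$ we can write $L_x=s_{a_1}\circ\cdots\circ s_{a_r}\in\qinn(Q)$; hence $s_x=s_x\circ L_e$ — more usefully, $s_x(y)=x\psi(x^{-1}y)$, and I would instead argue directly that $s_x=L_x\circ s_e\circ L_x^{-1}$ is a conjugate of $s_e$ inside $\qinn(Q)$ when $x\in P$ (since $L_x\in\qinn(Q)$ by the lemma). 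Because $s_e\in\qinn(Q)$ trivially, this shows $s_x\in\qinn(Q)$ for every $x\in P$, and likewise $s_x^{-1}\in\qinn(Q)$; applying these elements of $\qinn(Q)$ to a point of $P=\qinn(Q)e$ keeps us in $P$. Combined with $(Q1')$–$(Q3')$ being inherited from $Q$, this makes $P$ a subquandle.

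For part (ii), I would first check that $\psi$ maps $P$ to $P$. Using $\psi\circ s_a\circ\psi^{-1}=s_{\psi(a)}$ — which follows from the formula $s_a(y)=a\psi(a^{-1}y)$ and the fact that $\psi$ is a group automorphism — one gets, for $x=s_{a_1}\circ\cdots\circ s_{a_r}(e)\in P$, that $\psi(x)=\psi\bigl(s_{a_1}\circ\cdots\circ s_{a_r}(e)\bigr)=s_{\psi(a_1)}\circ\cdots\circ s_{\psi(a_r)}(\psi(e))=s_{\psi(a_1)}\circ\cdots\circ s_{\psi(a_r)}(e)\in P$, using $\psi(e)=e$. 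Since $\psi$ is a bijection on $G$ and carries $P$ onto a subset of $P$ (apply the same argument to $\psi^{-1}$ to get the reverse inclusion, or use finiteness of $P$), $\psi|_P$ is a bijection of $P$. Finally, $\psi|_P$ respects the quandle structure on $P$: for $x,y\in P$, $\psi(s_x(y))=\psi(x\psi(x^{-1}y))=\psi(x)\psi(\psi(x)^{-1}\psi(y))=s_{\psi(x)}(\psi(y))=s_{\psi|_P(x)}(\psi|_P(y))$, so $\psi|_P\in\qauto(P)$.

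The main obstacle I anticipate is being careful about the two directions of "$P$ is closed," particularly verifying $(Q2')$ inside $P$ — i.e. that the unique $z\in Q$ with $s_x(z)=y$ actually lies in $P$ when $x,y\in P$ — rather than just checking closure under $s_x$ itself. The clean way around this is to insist on the orbit description $P=\qinn(Q)e$ and the fact that $s_x\in\qinn(Q)$ for $x\in P$ (hence $s_x^{-1}\in\qinn(Q)$ as well), so that both $s_x$ and its inverse preserve the orbit $P$; this handles $(Q2')$ automatically. The reliance on Lemma~\ref{lem_Lx} to know $L_x\in\qinn(Q)$, and thus that $s_x$ is an inner automorphism when $x\in P$, is the crux, and everything else is a routine transport of identities through the group homomorphism $\psi$.
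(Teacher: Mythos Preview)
Your proof is correct, but you are working harder than necessary, and the paper's route is considerably shorter. For part~(i), you invoke Lemma~\ref{lem_Lx} and the identity $s_x=L_x\circ s_e\circ L_x^{-1}$ in order to conclude that $s_x\in\qinn(Q)$ whenever $x\in P$. But $s_x\in\qinn(Q)$ holds trivially for \emph{every} $x\in Q$, by the very definition of $\qinn(Q)$ as the subgroup generated by all the $s_x$'s. Accordingly, the paper simply observes: if $y=s_{a_1}\circ\cdots\circ s_{a_r}(e)\in P$ and $x\in P$, then $s_x(y)=s_x\circ s_{a_1}\circ\cdots\circ s_{a_r}(e)\in P$, and that is the whole argument. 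Your careful treatment of $(Q2')$ is a nice touch the paper glosses over, but it follows from the same trivial fact (since $s_x^{-1}\in\qinn(Q)$ too), or from finiteness of $P$.

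For part~(ii), you compute $\psi\circ s_a\circ\psi^{-1}=s_{\psi(a)}$ and then verify the quandle-homomorphism identity for $\psi|_P$ by hand. The paper instead notes in one line that $\psi=s_e$ as maps on $G$ (since $s_e(y)=e\,\psi(e^{-1}y)=\psi(y)$); hence $\psi(P)\subset P$ is just the special case $x=e$ of part~(i), and $\psi|_P\in\qauto(P)$ is automatic because $s_e$ is already a quandle automorphism of $Q$, and therefore of any subquandle it preserves. So the underlying idea --- $P=\qinn(Q)e$ is preserved by every element of $\qinn(Q)$ --- is the same in both arguments, but the paper's version bypasses Lemma~\ref{lem_Lx} entirely and avoids all explicit computation.
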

\begin{proof}
(i) Take $x,y \in P$ arbitrarily. Then there exist $a_1,\ldots,a_r \in G$ such that $y=s_{a_1} \circ \cdots \circ s_{a_r}(e)$. 
Thus $s_x(y)=s_x \circ s_{a_1} \circ \cdots \circ s_{a_r}(e)$, i.e., $s_x(y) \in P$, as required. 

\noindent
(ii) It is enough to show that $\psi(P) \subset P$, but this can be shown straightforwardly from $\psi=s_e$ as maps. 
\end{proof}

Since $e=s_e(e)\in P$ and by Proposition~\ref{prop_P},
$P$ becomes a subquandle of $Q(G,\psi)$ containing $e$.
Thus we can also define the following: 
\begin{equation}\label{P2}
\begin{split}
P^2=P(P(Q)) &:= \qinn(P(Q))e \\
&=\{ x\in P : \exists p_1,\ldots, \exists p_t \in P \text{ such that }x=s_{p_1} \circ \cdots \circ s_{p_t} (e)\}. 
\end{split}
\end{equation}

The following proposition claims a property on $P$ and $P^2$ as groups. 
\begin{prop}\label{normal_subgroup}
{\em (i)} $P$ is a normal subgroup of $G$. \\
{\em (ii)} $P^2$ is a normal subgroup of $P$. 
\end{prop}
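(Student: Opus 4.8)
The plan is to prove (i) directly, then deduce (ii) by essentially the same argument applied to the quandle $P$ in place of $Q$ (note $P = P(Q)$ is itself a generalized-Alexander-type situation relative to the action on $e$, so the structure used for $Q$ transfers). For (i), I would first show that $P$ is a subgroup of $G$. Since $e \in P$, it suffices to check closure under multiplication and inverses. Closure under multiplication follows from Lemma~\ref{lem_Lx}: if $x, y \in P$ then $L_x = s_{a_1} \circ \cdots \circ s_{a_r}$ for some $a_i \in G$, so $xy = L_x(y) = s_{a_1} \circ \cdots \circ s_{a_r}(y)$, and since $y = s_{b_1} \circ \cdots \circ s_{b_t}(e) \in P$ we get $xy = s_{a_1} \circ \cdots \circ s_{a_r} \circ s_{b_1} \circ \cdots \circ s_{b_t}(e) \in P$ by the very definition \eqref{P} of $P$. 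For inverses, I would use finiteness of $G$: if $x \in P$ then all powers $x^k$ lie in $P$ by closure under multiplication, and since $x$ has finite order, $x^{-1} = x^{\ord_G(x)-1} \in P$.

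Next I would show $P$ is normal in $G$. The natural approach is to exploit the quandle automorphisms $L_a$ introduced before Lemma~\ref{lem_Lx}. The key observation is that $P = \qinn(Q)e$ is an orbit, and conjugation by elements of $G$ should permute such orbits compatibly; more precisely, I would show that for any $a \in G$, $aPa^{-1} = P$. Given $x \in P$, write $L_x = s_{a_1} \circ \cdots \circ s_{a_r}$ via Lemma~\ref{lem_Lx}. Then conjugating the left-translation $L_{axa^{-1}}$ one computes $L_{axa^{-1}} = L_a \circ L_x \circ L_a^{-1} = L_a \circ s_{a_1} \circ \cdots \circ s_{a_r} \circ L_a^{-1}$, and since $L_a \in \qauto(Q)$ and $L_a \circ s_b \circ L_a^{-1} = s_{L_a(b)} = s_{ab}$, this equals $s_{a a_1} \circ \cdots \circ s_{a a_r}$. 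Evaluating at $e$ gives $axa^{-1} = L_{axa^{-1}}(e) = s_{aa_1} \circ \cdots \circ s_{aa_r}(e) \in P$, so $aPa^{-1} \subseteq P$; replacing $a$ by $a^{-1}$ gives equality, hence $P \triangleleft G$.

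For (ii), I would apply the same reasoning inside the quandle $P$: by Proposition~\ref{prop_P}, $(P, s|_P)$ is a subquandle of $Q(G,\psi)$ containing $e$, and its point symmetries are restrictions of the $s_x$ with $x \in P$. Since $P$ is a group (by part (i)) and $\psi|_P \in \qauto(P)$ (Proposition~\ref{prop_P}(ii)), one checks that $(P, s|_P)$ is precisely the generalized Alexander quandle $Q(P, \psi|_P)$ — indeed $s_x(y) = x\psi(x^{-1}y)$ for $x,y \in P$ is exactly the defining formula, using that $P$ is closed under the relevant operations. Then $P^2 = P(P(Q)) = P(Q(P,\psi|_P))$, and part (i) applied to the group $P$ and automorphism $\psi|_P$ gives that $P^2$ is a normal subgroup of $P$.

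\textbf{Main obstacle.} The routine part is the chain of identities; the step requiring the most care is verifying that $(P, s|_P) = Q(P, \psi|_P)$ as generalized Alexander quandles, i.e., that restricting the ambient construction to the subgroup $P$ genuinely reproduces the generalized-Alexander construction on $P$ with automorphism $\psi|_P$ — this needs $\psi(P) \subseteq P$ (which is Proposition~\ref{prop_P}(ii), since $\psi = s_e$), and it needs the observation from the definition \eqref{P2} that the inner automorphisms generating $P^2$ are built from $s_p$ with $p \in P$. Once that identification is in place, (ii) is immediate from (i), so the whole argument reduces to the orbit-and-conjugation computation in (i), whose only subtlety is the passage from $aPa^{-1} \subseteq P$ to equality, handled by symmetry in $a \leftrightarrow a^{-1}$.
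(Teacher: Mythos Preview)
Your proof is correct and follows essentially the same approach as the paper's: closure via Lemma~\ref{lem_Lx}, normality via the identity $L_a \circ s_b \circ L_a^{-1} = s_{ab}$, and (ii) by applying (i) to $Q(P,\psi|_P)$. The only cosmetic difference is your inverse step, where you use $x^{-1} = x^{\ord_G(x)-1}$ via closure under products, whereas the paper writes $L_x^{-1} = s_{a_r}^{m-1}\circ\cdots\circ s_{a_1}^{m-1}$ explicitly (with $m=\ord_{\gauto(G)}\psi$) and evaluates at $e$; both arguments rely on finiteness and are equivalent in spirit.
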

\begin{proof}
The statement (ii) immediately follows from (i) together with Proposition~\ref{prop_P}. 
In fact, since $P$ is a group by (i) and $P$ also becomes a quandle by Proposition~\ref{prop_P}, 
the statement (ii) is nothing but the statement for the generalized Alexander quandle $Q(P,\psi|_P)$. 
Thus, it suffices to show (i). 

\noindent
\underline{$e \in P$}: We have already seen before the definition of \eqref{P2}.

\noindent
\underline{$x,y\in P \  \Rightarrow \  xy \in P$}:
Take $x,y\in P$. Then there exist $b_1,b_2,\ldots ,b_t \in G$ such that $y=s_{b_1}\circ \cdots \circ s_{b_t} (e)$.
Moreover, by Lemma~\ref{lem_Lx}, there exist $a_1,a_2,\ldots ,a_r  \in G$ such that
$L_x=s_{a_1}\circ \cdots \circ s_{a_r}$. Then
\[
    xy = L_x(y)
    = s_{a_1} \circ \cdots \circ s_{a_r} \circ s_{b_1}\circ \cdots \circ s_{b_t} (e).
\]
Hence we have $xy\in P$.

\noindent
\underline{$x\in P \  \Rightarrow \  x^{-1} \in P$}: Let $m=\ord_{\gauto(G)}(\psi)$. Take $x\in P$. 
Then there exist $a_1,a_2,\ldots ,a_r  \in G$ such that $L_x=s_{a_1}\circ \cdots \circ s_{a_r}$.
Since $s_a^m=\id$ for any $a \in G$, we have $L_x^{-1}=s_{a_r}^{m-1}\circ \cdots \circ s_{a_1}^{m-1}$.
By using this, we have 
$$x^{-1} = x^{-1}\cdot e =L_{x^{-1}}(e)=L_x^{-1}(e)=
s_{a_r}^{m-1}\circ \cdots \circ s_{a_1}^{m-1}(e).$$
Hence we have $x^{-1}\in P$. 

\noindent
\underline{Normal subgroup}: Now we will show $a x a^{-1}\in P$ for any $x\in P$ and $a\in G$.
By Lemma~\ref{lem_Lx},
there exist $a_1,\ldots ,a_r  \in G$ such that $L_x=s_{a_1}\circ \cdots \circ s_{a_r}$. 
Since
\[
    a x a^{-1}
    =L_a \circ L_x(a^{-1})
    =L_a \circ s_{a_1}\circ \cdots \circ s_{a_r}(a^{-1})
    =s_{a\cdot a_1}\circ \cdots \circ s_{a\cdot a_r}(a\cdot a^{-1})
    =s_{a\cdot a_1}\circ \cdots \circ s_{a\cdot a_r}(e),
\]
we have $a x a^{-1} \in P$.
\end{proof}
\begin{rem}\label{chuui_P}
Let $Q=Q(G,\psi)$ be a generalized Alexander quandle. \\
{\rm (i)} By direct computation, $P(Q)=\{e\}$ holds if and only if $\fix(\psi,G)=G$, i.e., $\psi=\id$.\\
{\rm (ii)} By definition of \eqref{P}, $P(Q)=G$ holds if and only if $Q$ is connected.
\end{rem}

\begin{rem}\label{rem:Bonatto}
In \cite{Bonatto}, the notion of the \emph{displacement group} $\Dis(Q):=\langle \{s_x\circ s_y^{-1} : x,y\in Q\}\rangle$ for quandles $Q$ 
and its importance are mentioned. 
In the context of generalized Alexander quandles $Q=Q(G,\psi)$, where $G$ is a finite group and $\psi \in \gauto(G)$, the following is proved: 
$$\Dis(Q)\cong_\mathrm{gr}[G,\psi]=\langle \{x\psi(x)^{-1} : x \in G\}\rangle.$$
By Proposition~\ref{normal_subgroup}, we can prove that $[G,\psi]\cong_\mathrm{gr} P(Q)$.
In fact, since $x\psi(x)^{-1}=s_x(e)$ for $x \in G$ and $P(Q)$ is a subgroup of $G$, it follows that $[G,\psi] \subset P(Q)$. 
Moreover, since $s_x \circ s_y(e)=s_{s_x(y)}(s_x(e))=s_{s_x(y)}(e) \cdot s_{\psi(x)}(e)$ for $x,y \in G$, we obtain the other inclusion. 
\end{rem}

\begin{cor}\label{cor:PeqG}
If $G$ is simple and $\psi \neq \id$, then $P=G$.
\end{cor}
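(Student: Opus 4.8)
The plan is to combine the normality of $P$ with simplicity of $G$, and then use Remark~\ref{chuui_P}(i) to exclude the trivial case. By Proposition~\ref{normal_subgroup}(i), $P=P(Q)$ is a normal subgroup of $G$. Since $G$ is simple, the only normal subgroups of $G$ are $\{e\}$ and $G$ itself, so we must have either $P=\{e\}$ or $P=G$.

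To finish, I would rule out $P=\{e\}$. By Remark~\ref{chuui_P}(i), $P(Q)=\{e\}$ holds if and only if $\fix(\psi,G)=G$, i.e., $\psi=\id$. As we are assuming $\psi\neq\id$, this forces $P\neq\{e\}$, and hence $P=G$.

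There is essentially no obstacle here: the statement is an immediate corollary, with all the genuine work already carried out in Proposition~\ref{normal_subgroup} (establishing $P\triangleleft G$) and in the direct computation behind Remark~\ref{chuui_P}(i). The only thing to be slightly careful about is that the dichotomy $P\in\{\{e\},G\}$ uses simplicity in the form ``no proper nontrivial normal subgroups,'' which is exactly what is needed once normality of $P$ is in hand.
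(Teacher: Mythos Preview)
Your proposal is correct and matches the paper's own proof essentially verbatim: the paper likewise invokes Remark~\ref{chuui_P}(i) to get $P\neq\{e\}$ from $\psi\neq\id$, and then uses Proposition~\ref{normal_subgroup}(i) together with simplicity of $G$ to conclude $P=G$. The only cosmetic difference is the order of the two steps.
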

\begin{proof}
Since $\psi \neq \id$,  we have $P \neq \{e\}$ by Remark~\ref{chuui_P} (i). 
Since $G$ is simple and $P$ is a normal subgroup of $G$ by Proposition~\ref{normal_subgroup} (i), we obtain that $P=G$. 
\end{proof}

\begin{rem}
By Remark~\ref{chuui_P} (ii) and Corollary~\ref{cor:PeqG}, 
generalized Alexander quandles obtained from finite simple groups and non-identity automorphisms are connected.
\end{rem}

\subsection{Description of $\qinn(Q(G,\psi))$} 
Next, we describe $\qinn(Q(G,\psi))$ by using $P$. 
\begin{prop}[cf. {\cite[Lemma 2.7]{Bonatto}}]\label{prop:inn(Q)}
Let $G$ be a finite group and let $\psi \in \gauto(G)$.
Let $P=P(Q(G,\psi))$ and $m=\ord_{\gauto(G)}(\psi)$. 
Then we have $$\qinn(Q(G,\psi)) \cong_\mathrm{gr} P \rtimes_\phi C_m,$$ 
which is a semidirect product of $P$ and $C_m$ with $\phi : C_m \rightarrow \gauto(G)$, $i \mapsto \psi^i$, i.e., 
$$(x,i) \cdot (y,j)=(x \psi^i(y),i+j).$$
\end{prop}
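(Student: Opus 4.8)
The plan is to write down an explicit candidate isomorphism from the semidirect product $P \rtimes_\phi C_m$ onto $\qinn(Q(G,\psi))$ and then verify that it is a well-defined homomorphism, that it is surjective, and that it is injective. First I would use Lemma~\ref{lem_Lx}: for each $x \in P$ there exist $a_1,\dots,a_r \in G$ with $L_x = s_{a_1} \circ \cdots \circ s_{a_r}$, so $L_x \in \qinn(Q)$. Also $s_e = \psi$ as maps, so $\psi \in \qinn(Q)$ and hence $\psi^i \in \qinn(Q)$ for all $i$. This suggests defining
\begin{equation*}
\Phi : P \rtimes_\phi C_m \longrightarrow \qinn(Q(G,\psi)), \qquad \Phi(x,i) = L_x \circ \psi^i.
\end{equation*}
I would first check $\Phi$ is a homomorphism: using that $L_a$ commutes appropriately with $\psi$ — concretely $\psi \circ L_x = L_{\psi(x)} \circ \psi$, which follows since $\psi(xy) = \psi(x)\psi(y)$ — one computes $(L_x \circ \psi^i) \circ (L_y \circ \psi^j) = L_x \circ L_{\psi^i(y)} \circ \psi^{i+j} = L_{x\psi^i(y)} \circ \psi^{i+j} = \Phi(x\psi^i(y), i+j)$, matching the stated multiplication $(x,i)\cdot(y,j) = (x\psi^i(y), i+j)$. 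Well-definedness on $C_m$ uses $\psi^m = \id$ in $\gauto(G)$.

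Next, surjectivity. Each generator $s_a$ of $\qinn(Q)$ can be written $s_a(y) = a\psi(a^{-1}y) = a\psi(a^{-1})\psi(y) = L_{a\psi(a^{-1})} \circ \psi (y)$, and $a\psi(a^{-1}) = s_a(e) \in P$ (indeed $s_a(e) \in P$ by definition of $P$, using one term in the defining expression and padding with powers of $s_e$ as in the proof of Lemma~\ref{lem_Lx}). So each $s_a = \Phi(s_a(e), 1)$ lies in the image of $\Phi$; since the $s_a$ generate $\qinn(Q)$ and the image of $\Phi$ is a subgroup, $\Phi$ is surjective.

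Finally, injectivity. Suppose $\Phi(x,i) = L_x \circ \psi^i = \id$. Evaluating at $e$ gives $x = L_x(\psi^i(e)) = L_x(e) = x$ — unhelpful directly — so instead evaluate at a general $y$: $L_x(\psi^i(y)) = x\psi^i(y) = y$ for all $y \in G$. Taking $y = e$ yields $x = e$; then $\psi^i(y) = y$ for all $y$, i.e.\ $\psi^i = \id$ in $\gauto(G)$, so $m \mid i$ and thus $i = 0$ in $C_m$. Hence $\ker \Phi$ is trivial and $\Phi$ is an isomorphism. I expect the only mildly delicate point — the main obstacle — to be the bookkeeping showing that $\Phi$ lands in $\qinn(Q)$ (not merely in $\qauto(Q)$) and that it is well-defined as a map out of the semidirect product, i.e.\ that the $C_m$-factor genuinely acts via $\psi$ modulo $m$; both reduce to the identity $s_e = \psi$ together with $s_a^m = \id$ and Lemma~\ref{lem_Lx}, so no real computation beyond what is already in the excerpt is needed. (This also re-proves $\qinn(Q) \cong_\mathrm{gr} \qinn(Q')$ compatibly with the earlier invariants, and recovers the cited \cite[Lemma 2.7]{Bonatto}.)
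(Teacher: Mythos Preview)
Your proof is correct. The paper itself does not supply a proof of this proposition --- it merely states the result with a cross-reference to \cite[Lemma~2.7]{Bonatto} --- so there is no in-paper argument to compare against. The explicit map $\Phi(x,i)=L_x\circ\psi^i$ you write down is exactly the natural one, and your verifications (homomorphism via $\psi^i\circ L_y=L_{\psi^i(y)}\circ\psi^i$, surjectivity via $s_a=L_{s_a(e)}\circ\psi=\Phi(s_a(e),1)$, injectivity by evaluating at $e$ and then at a general $y$) all go through cleanly.

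One small clean-up: the membership $s_a(e)\in P$ is immediate from the definition $P=\qinn(Q)e$ with the single factor $s_a$; no padding with $s_e$ is needed there. The padding trick from Lemma~\ref{lem_Lx} is only required when you want the composite $s_{a_1}\circ\cdots\circ s_{a_r}$ to equal $L_x$ on the nose (rather than $L_x\circ\psi^k$ for some $k$), which is a different issue.
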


As in \cite[Proposition~5.6]{HK}, the semi-direct product in Proposition~\ref{prop:inn(Q)} may become a direct product in some cases.
The following proposition shows the relationship between the structure of the semi-direct product and the property of $\psi|_P$ when $P$ is centerless, i.e., $Z(P)=\{e\}$. 

\begin{prop}
\label{prop:semidirect}
Work with the notation as above. Assume that $P$ is centerless. 
Then the following assertions hold. \\
{\em (i)} If $\psi|_P\in \ginn(P)$, i.e., $\psi|_P$ is an inner automorphism, then we have $$\qinn(Q(G,\psi)) \cong_\mathrm{gr} P \rtimes_\phi C_m \cong_\mathrm{gr} P \times C_m.$$
{\em (ii)} If $\psi|_P\notin \ginn(P)$, i.e., $\psi|_P$ is an outer automorphism, then we have $$\qinn(Q(G,\psi)) \cong_\mathrm{gr} P \rtimes_\phi C_m \not\cong_\mathrm{gr} P \times C_m.$$
\end{prop}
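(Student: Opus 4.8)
The plan is to analyze the semidirect product $P \rtimes_\phi C_m$ directly, using the hypothesis $Z(P) = \{e\}$ to pin down exactly when the extension splits as a direct product. The key observation is that $P \rtimes_\phi C_m \cong_\mathrm{gr} P \times C_m$ if and only if the action $\phi$ is \emph{trivial after a change of complement}, i.e., if and only if there is a subgroup of $P \rtimes_\phi C_m$ isomorphic to $C_m$ that is normal and centralizes $P$. Both parts of the proposition follow from a careful bookkeeping of this condition, so I would treat (i) and (ii) together by proving the equivalence "$\psi|_P \in \ginn(P)$ $\iff$ $P \rtimes_\phi C_m \cong_\mathrm{gr} P \times C_m$."

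First I would prove (i). Suppose $\psi|_P = (\cdot)^c$ for some $c \in P$ (note $c \in P$ because $\psi|_P \in \auto(P)$ and conjugations by elements of $P$ are the inner automorphisms of $P$). Then in $P \rtimes_\phi C_m$, consider the element $g = (c^{-1}, 1)$, or more precisely the element $(c, 1)^{-1}$ adjusted so that conjugation cancels $\psi$; the natural candidate for a central $C_m$-complement is generated by $(c, 1)^{-1} \cdot (\text{something})$. Concretely, set $z = (w, 1)$ where $w \in P$ is chosen so that $(w,1)$ commutes with every $(y, 0)$: the commutation condition $(w,1)(y,0) = (y,0)(w,1)$ reads $(w\psi(y), 1) = (yw, 1)$, i.e., $w \psi(y) w^{-1} = y$ for all $y \in P$, i.e., $\psi|_P = (\cdot)^{w^{-1}}$, so $w = c^{-1}$ works. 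Then I would check that $z = (c^{-1}, 1)$ has order exactly $m$ in $P \rtimes_\phi C_m$: since $z^k = (c^{-1}\psi(c^{-1})\cdots\psi^{k-1}(c^{-1}), k)$ and $\psi|_P$ is conjugation by $c$, the $P$-component telescopes to $c^{-k} \cdot c^{\,?}$... here I would use $Z(P) = \{e\}$ to conclude that $z^m = (e, 0)$ forces nothing extra, and more importantly that $\langle z \rangle \cap (P \times \{0\}) = \{e\}$. Then $P \times \{0\}$ is normal, $\langle z\rangle$ is a complement centralizing $P \times \{0\}$, and $\langle z\rangle$ is normal because conjugating $z$ by $(y,0)$ gives $z$ again (that was the defining property of $w = c^{-1}$). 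Hence the product is direct.

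For (ii), I would argue by contraposition: suppose $P \rtimes_\phi C_m \cong_\mathrm{gr} P \times C_m$; I want $\psi|_P \in \ginn(P)$. The subtle point is that a \emph{group} isomorphism between the two sides need not respect the obvious copies of $P$ and $C_m$, so I cannot just read off triviality of the action. Instead I would use an invariant characterization: in $P \times C_m$, the subgroup $P \times \{0\}$ is recoverable as... well, it need not be canonical if $P$ has $C_m$-quotients. The cleanest route: in $P \rtimes_\phi C_m$, the derived subgroup and the structure of $P$ as the "large" normal factor. Actually, since $P$ is centerless, $Z(P \times C_m) = \{e\} \times C_m$, which is cyclic of order $m$; and $Z(P \rtimes_\phi C_m) = \{(x,i) : x\psi^i(y) = yx \text{ and } \psi(x) = x \text{-type conditions}\}$. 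Computing this center and using $Z(P) = \{e\}$, one finds $Z(P\rtimes_\phi C_m)$ is the set of $(x, i)$ with $x$ centralizing $P$ (so $x = e$) and $\psi^i = \id$ on $P$ composed with the conjugation-by-$x$ condition; so $Z(P \rtimes_\phi C_m) = \{e\} \times \{i : \psi^i|_P = \id\}$, which has order $m / \ord(\psi|_P)$. Since the two groups are isomorphic they have the same center, so $m/\ord(\psi|_P) = m$, forcing $\ord(\psi|_P) = 1$, i.e., $\psi|_P = \id \in \ginn(P)$ — wait, that would be too strong, so I expect the actual center computation to be more delicate and $\psi|_P$ only lands in $\ginn(P)$, not necessarily trivial.

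\medskip

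\noindent\textbf{Main obstacle.} The hard part is part (ii): ruling out an "exotic" group isomorphism $P \rtimes_\phi C_m \xrightarrow{\sim} P \times C_m$ that does not come from a splitting of the action. I expect to handle this by a center computation as sketched, but the precise structure of $Z(P \rtimes_\phi C_m)$ when $\psi|_P$ has mixed behavior requires care. If that does not cleanly give the result, the fallback is: a direct product decomposition $P \rtimes_\phi C_m = A \times B$ with $|A| = |P|$, $|B| = m$ gives, by comparing with the normal subgroup $P \times \{0\}$ and using $Z(P) = \{e\}$, that $B$ must centralize $P\times\{0\}$; then projecting $B$ to the $C_m$-quotient shows $B$ surjects onto $C_m$, so $B$ maps isomorphically, and lifting a generator of $C_m$ to $B$ produces an element $(w, 1)$ commuting with all of $P \times\{0\}$, which — exactly as in part (i) — forces $\psi|_P = (\cdot)^{w^{-1}} \in \ginn(P)$, contradiction. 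This second argument is probably the one to write up, as it is uniform and avoids fragile center bookkeeping.
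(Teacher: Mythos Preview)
Your fallback for (ii) and your construction in (i) are essentially the paper's proof: for (i) the paper writes the explicit isomorphism $(x,i)\mapsto(xc^i,i)$ (inverse to your central-complement construction via $z=(c^{-1},1)$), and for (ii) it likewise pulls back $\{e\}\times C_m$ under the assumed isomorphism, shows each preimage $(b,k)$ centralizes $P\times\{0\}$ so that $\psi^k|_P\in\ginn(P)$, and finishes by a count rather than by lifting a single generator as you do. The one spot you leave vague---why $\ord(z)=m$---is precisely where $Z(P)=\{e\}$ enters: it forces $\ord_P(c)=\ord_{\gauto(P)}(\psi|_P)\mid \ord_{\gauto(G)}(\psi)=m$, hence $c^m=e$ and $z^m=(e,0)$.
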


\begin{proof}
(i)
By $\psi|_P \in \ginn(P)$, there exists $\piconj\in P$ such that $\psi|_P=(\cdot)^\piconj$.
Since $P$ is centerless, $\ginn(P)\cong_\mathrm{gr} P$ holds. Then we have $\ord_{\gauto(P)}(\psi|_P)=\ord_P(\piconj)$.
On the other hand, $\ord_{\gauto(P)}(\psi|_P)$ is a divisor of $m=\ord_{\gauto(G)}(\psi)$.
Hence $\ord_P(\piconj)$ is a divisor of $m$. 
We define the map $\Psi : P \rtimes_\phi C_m \rightarrow P \times C_m$ by 
$$\Psi((x,i)) =(x \piconj^i, i).$$ 
Then it is straightforward to check that this gives a group isomorphism: 
\begin{itemize}
\item Since $\piconj \in P$, we see that $x \piconj^i \in P$ for any $x \in P$ and $i \in C_m$. 
Also since $\ord_P(a)$ is a divisor of $m$, we see that $x \piconj^i=x \piconj^j$ whenever $i \equiv j \pmod{m}$. Hence, the well-definedness follows. 
\item One has $\Psi((x,i)\cdot(y,j))=\Psi((x\piconj^i y \piconj^{-i},i+j))=(x\piconj^iy\piconj^j,i+j)=\Psi((x,i))\Psi((y,j)).$ 
\item One has $\Psi((x,i))=\Psi((y,j)) \;\Longleftrightarrow\;(x\piconj^i,i)=(y\piconj^j,j)  \;\Longleftrightarrow\; x=y \text{ and } i \equiv j \pmod{m}$. 
\item For any $(x,i) \in P \times C_m$, since $x \piconj^{-i} \in P$, we have $\Psi((x\piconj^{-i},i))=(x,i)$, as required. 
\end{itemize}

\medskip

(ii)
Suppose that there exists a group isomorphism $\Omega \colon P \rtimes_\phi C_m \to P \times C_m$. 
Let $X=\{(x, 0) : x \in P\} \subset P \rtimes_\phi C_m$. Then $X$ is a subgroup of $P \rtimes_\phi C_m$. 
Let $X'=\Omega(X)$. Then $X'$ becomes a subgroup of $P \times C_m$. 
For each $(x, 0) \in X$, we write $(a_x,j_x)=\Omega(x,0)$.

Let $Y'=\{(e,i) : i \in C_m\} \subset P \times C_m$. Then $Y'$ is a subgroup of $P \times C_m$. 
Let $Y=\Omega^{-1}(Y')$. Then $Y$ becomes a subgroup of $P \rtimes_\phi C_m$ and $|Y|=|Y'|=m$ since $\Omega$ is a group isomorphism. 
For each $(e,i) \in Y'$, we write $(b_i,k_i)=\Omega^{-1}((e,i))$. 

Since $\Omega^{-1}$ is also a group isomorphism, we see that 
\begin{align*}
\Omega^{-1} ((e,i) (a_x,j_x))
&=\Omega^{-1} ((e,i)) \cdot \Omega^{-1} ((a_x,j_x))
=(b_i,k_i)\cdot(x,0)
=(b_i\psi^{k_i} (x) ,k_i) \text{ and }\\
\Omega^{-1} ((e,i) (a_x,j_x))&=\Omega^{-1} ((a_x,i+j_x))=\Omega^{-1} ((a_x,j_x)(e,i))
=\Omega^{-1} ((a_x,j_x))\cdot \Omega^{-1} ((e,i))\\
&=(x,0) \cdot (b_i,k_i)=(xb_i,k_i). 
\end{align*}
Hence, we obtain that $b_i\psi^{k_i} (x) = xb_i \;\Longleftrightarrow \psi^{k_i} (x) = b_i^{-1}xb_i$
for any $x \in P$, i.e., $\psi^{k_i}|_P$ is an inner automorphism on $P$. 
Note that $b_i$ is determined by $\psi$ and $k_i$ since $P$ is centerless.
We put $b_i=b_{k_i}$. 
This implies that 
$$Y=\{(b_i,k_i) : i \in C_m\} =\{(b_k,k) : k\in C_m\text{ and }\psi^{k}|_P \in \ginn(P)\}.$$
On the other hand, $\psi|_P \not\in \ginn(P)$. This implies $k \neq 1$. Thus, we have $|Y|\le m-1$. 
This is a contradiction to $|Y|=m$.

Therefore, $P \rtimes_\phi C_m \not\cong_\mathrm{gr} P \times C_m$, as desired.
\end{proof}
When $P$ is not centerless, Proposition~\ref{prop:semidirect} does not hold in general. 
In fact, let $G$ be a special linear group $\mathrm{SL}(2,3)$ of degree $2$ over $\mathbb{F}_3$ and 
let $\psi$ be the inner automorphism on $G$ with respect to $A=\begin{pmatrix} 0 & -1\\ 1 & 0 \end{pmatrix}$. 
Note that $\ord_{\auto(G)}(\psi)=2$, nevertheless $\ord_{G}(A)=4$. 
Then $P=P(Q(G,\psi))\cong_\mathrm{gr} Q_8$, where $Q_8$ is the quaternion group. (For the detail of $Q_8$, see Section~\ref{sec:8}.) 
It is well known that $Z(Q_8) \cong_\mathrm{gr} C_2$. Moreover, $\psi|_P \in \ginn(P)$.
On the other hand, we can verify that $$\qinn(Q(G,\psi)) \cong_\mathrm{gr} P \rtimes C_2 \not\cong_\mathrm{gr} P \times C_2.$$

\subsection{Proofs of Theorem~\ref{simple} and Corollary~\ref{cor:S_n}}\label{sec:proof_simple}
Now, we prove the following theorem, which will play a crucial role for the proof of Theorem~\ref{simple}. 
\begin{thm}\label{breakthrough}
Let $G,G'$ be finite groups with the identity elements $e,e'$,
respectively.
Let $\psi \in \gauto(G)$ and $\psi' \in \gauto(G')$ and 
$P=P(Q(G,\psi))$ and $P'=P(Q(G',\psi'))$. 
Assume that $Q(G,\psi) \cong_\mathrm{qu} Q(G',\psi')$ and
let $f$ be a quandle isomorphism $f:Q(G,\psi) \rightarrow Q(G',\psi')$ with $f(e)=e'$.
Then the following assertions hold: 
\begin{itemize}
\item[{\em (i)}] The restriction $f|_P$ gives a quandle isomorphism between $Q(P,\psi|_P)$ and $Q(P',\psi'|_{P'})$. 
\item[{\em (ii)}] The equality $f \circ \psi = \psi' \circ f$ holds. 
\item[{\em (iii)}] A quandle isomorphism $f|_P$ is a group homomorphism. In particular, $P \cong_\mathrm{gr} P'$ and $f|_P \in \gauto(P)$. 
\item[{\em (iv)}] For any $x \in G$, $f(xP)=f(x)P'$ holds.  
\end{itemize}
\end{thm}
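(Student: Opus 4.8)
The plan is to prove the four assertions essentially in the order stated, since each feeds into the next. Throughout I work with the quandle isomorphism $f : Q(G,\psi) \to Q(G',\psi')$ normalized so that $f(e)=e'$; such a normalization is harmless because we may always post-compose with a left translation $L_{e'f(e)^{-1}}^{} = L_{f(e)^{-1}}$ (recalling $L_a \in \qauto(Q(G',\psi'))$ from the discussion preceding this subsection), which is a quandle automorphism sending $f(e)$ to $e'$.

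\emph{Step (i).} Recall from \eqref{P} that $P$ is precisely the orbit $\qinn(Q(G,\psi))\,e$, i.e. the set of elements of the form $s_{a_1}\circ\cdots\circ s_{a_r}(e)$. Since $f$ is a quandle isomorphism with $f(e)=e'$, it intertwines the point symmetries: $f\circ s_a = s'_{f(a)}\circ f$. Applying this inductively to an expression $x = s_{a_1}\circ\cdots\circ s_{a_r}(e)$ gives $f(x) = s'_{f(a_1)}\circ\cdots\circ s'_{f(a_r)}(e')$, so $f(P)\subseteq P'$; the same argument applied to $f^{-1}$ (which also fixes the identity) gives the reverse inclusion, hence $f(P)=P'$. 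By Proposition~\ref{prop_P}, $P$ and $P'$ are subquandles, and by Proposition~\ref{prop_P}(ii), $\psi|_P$ and $\psi'|_{P'}$ are their point symmetries at the identity (namely $s_e$ and $s'_{e'}$). So $f|_P$ is a quandle isomorphism $Q(P,\psi|_P)\to Q(P',\psi'|_{P'})$.

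\emph{Step (ii).} Here is the step I expect to require the most care, and it is the crux of the whole theorem. Since $\psi=s_e$ as maps on $G$ (directly from $s_e(y)=e\psi(e^{-1}y)=\psi(y)$), and likewise $\psi'=s'_{e'}$, the intertwining relation $f\circ s_e = s'_{f(e)}\circ f = s'_{e'}\circ f$ gives immediately $f\circ\psi = \psi'\circ f$. So in fact (ii) is almost a triviality once one observes $\psi=s_e$; the only subtlety is that this uses the normalization $f(e)=e'$ in an essential way.

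\emph{Steps (iii) and (iv).} For (iii), I want to show $f|_P(xy)=f|_P(x)\,f|_P(y)$ for $x,y\in P$. The idea is to use Lemma~\ref{lem_Lx}: for $x\in P$ there are $a_1,\dots,a_r\in G$ with $L_x = s_{a_1}\circ\cdots\circ s_{a_r}$ (as maps on all of $G$). Intertwining through $f$, the composite $s'_{f(a_1)}\circ\cdots\circ s'_{f(a_r)}$ equals $f\circ L_x\circ f^{-1}$ on $G'$; evaluating at $e'$ shows this composite sends $e'$ to $f(x)$, hence — since a composite of point symmetries that moves $e'$ to a point $p\in P'$ acts as $L_p$ up to a power of $\psi'$, exactly as in the proof of Lemma~\ref{lem_Lx} (after padding $r$ to be a multiple of $\ord\psi$, respectively $\ord\psi'$, using $s_e=\psi$ and $f\circ\psi=\psi'\circ f$ from (ii) to keep the padding consistent on both sides) — we get $f\circ L_x\circ f^{-1} = L_{f(x)}$ on $G'$. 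Applying both sides to $f(y)$ yields $f(xy)=f(x)f(y)$. The subtlety to watch is the $\psi$-padding: one must arrange $r\equiv 0 \pmod{\ord_{\gauto(G)}\psi}$ and simultaneously ensure the transported exponent is $\equiv 0\pmod{\ord_{\gauto(G')}\psi'}$; this is legitimate because $f\circ\psi^k = \psi'^k\circ f$ for all $k$, so $f\circ L_x\circ f^{-1}$ differs from $L_{f(x)}$ only by some power of $\psi'$, and that power is forced to be trivial by evaluating at a generic argument. Once $f|_P$ is a group homomorphism and a bijection, $P\cong_\mathrm{gr}P'$ and $f|_P\in\gauto(P)$ follow. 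Finally, (iv): for $x\in G$, pick $p\in P$ and compute $f(xp)$. Using Lemma~\ref{lem_Lx} write $L_p = s_{c_1}\circ\cdots\circ s_{c_t}$ on $G$; then $f(L_x(p)) = f(x L_p(x^{-1}x)) $ — more directly, $xP = \{xp : p\in P\}$ and I will show $f(xp)\in f(x)P'$ for every $p\in P$, giving $f(xP)\subseteq f(x)P'$, with equality by a cardinality count since $|P|=|P'|$ by (iii). To see $f(xp)\in f(x)P'$: write $p = s_{a_1}\circ\cdots\circ s_{a_r}(e)$ and use the identity $xp = L_x(p)$; transporting the computation from the proof of Proposition~\ref{normal_subgroup} (the ``normal subgroup'' paragraph, which showed $axa^{-1}\in P$ via $L_a\circ s_{a_1}\circ\cdots = s_{a\cdot a_1}\circ\cdots$) through $f$, and using that $f\circ L_x = $ (composite of point symmetries based at $f$-images)$\,\circ f$ only after we know $x\in P$ — but for general $x\notin P$ this fails, so instead I argue directly: $f(xp) = f(s_x\circ(\text{something})(e))$? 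No — the clean route is to note $xp\,P = x\,pP = xP$ as cosets (since $p\in P\triangleleft G$), so it suffices to show $f$ maps the coset $xP$ into a single coset of $P'$; and two elements $x_1,x_2$ of $G$ lie in the same $P$-coset iff $x_1^{-1}x_2\in P$ iff (by (iii) and the subquandle structure transported by $f$) $f(x_1)^{-1}f(x_2)\in P'$ — this last equivalence is exactly where I will need that $f$ restricted to each coset is, after translating, the same bijection $P\to P'$, which follows from combining (i)--(iii) with the translation action $L_a$. I expect Step (iii)'s $\psi$-bookkeeping and this coset argument in (iv) to be the main obstacles; everything else is formal.
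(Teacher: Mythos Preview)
Your arguments for (i) and (ii) are correct and match the paper's proof exactly; in particular the observation $\psi=s_e$ is precisely the point.

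For (iii) you have the right mechanism but you are overcomplicating the padding. Once you have (ii), iterating gives $f\circ\psi^r=\psi'^r\circ f$ for every $r$; hence if $r\equiv 0\pmod{\ord_{\gauto(G)}\psi}$ then $\psi'^r=\id$ automatically (since $f$ is a bijection). There is no need for a ``generic argument'' to kill the residual power of $\psi'$. Equivalently, the paper simply cites the invariant $\ord_{\gauto(G)}\psi=\ord_{\gauto(G')}\psi'$ and writes the one-line computation
\[
f(xy)=f\circ s_{a_1}\circ\cdots\circ s_{a_r}(y)=s'_{f(a_1)}\circ\cdots\circ s'_{f(a_r)}(f(y))=s'_{f(a_1)}\circ\cdots\circ s'_{f(a_r)}(e')\cdot f(y)=f(x)f(y).
\]

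Part (iv) has a genuine gap. You abandon the direct computation because you believe you need $x\in P$ in order to transport $L_x$ through $f$, and your fallback coset argument is circular: the equivalence ``$x_1^{-1}x_2\in P\Leftrightarrow f(x_1)^{-1}f(x_2)\in P'$'' is exactly the content of (iv) itself. The point you are missing is that the identity $L_x\circ s_a=s_{xa}\circ L_x$ holds for \emph{every} $x\in G$ (this is nothing more than $L_x\in\qauto(Q)$), so you never need $L_x\in\qinn(Q)$. Writing $p=s_{a_1}\circ\cdots\circ s_{a_r}(e)$ and sliding $L_x$ across gives
\[
xp=s_{xa_1}\circ\cdots\circ s_{xa_r}(x),
\]
whence $f(xp)=s'_{f(xa_1)}\circ\cdots\circ s'_{f(xa_r)}(f(x))$. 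Now apply the same identity on the $G'$ side with $L_{f(x)}^{-1}$ to get
\[
f(xp)=L_{f(x)}\bigl(s'_{f(x)^{-1}f(xa_1)}\circ\cdots\circ s'_{f(x)^{-1}f(xa_r)}(e')\bigr)\in f(x)P'.
\]
This gives $f(xP)\subset f(x)P'$; the reverse inclusion is the symmetric computation (not merely a cardinality count, though that also works once you have one inclusion and $|P|=|P'|$). The paper does exactly this.
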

Note that when $Q(G,\psi) \cong_\mathrm{qu} Q(G',\psi')$, we can always choose a quandle isomorphism $f$ between them with $f(e)=e'$ (see, e.g.,~\cite{HK}).
We also remark that $Q(P,\psi|_P)$ (resp. $Q(P',\psi'|_{P'})$) becomes a subquandle of $Q(G,\psi)$ (resp. $Q(G',\psi')$) by Proposition~\ref{prop_P} (i).

\begin{proof}[Proof of Theorem~\ref{breakthrough}] 

Let $s$ (resp. $s'$) be the point symmetry of $Q(G,\psi)$ (resp. $Q(G',\psi')$).
\noindent
(i) Since $f|_P$ is a quandle homomorphism, 
it is enough to show that $f|_P(P) = P'$.
This can be shown by 
$$
f( s_{a_1} \circ \cdots \circ s_{a_r}(e) ) =s_{f(a_1)}' \circ \cdots \circ s_{f(a_r)}'(e')
\;\text{ and }
s'_{a'_1} \circ \cdots \circ s_{a'_t}(e')=f(s_{f^{-1}(a'_1)} \circ \cdots \circ s_{f^{-1}(a'_t)}(e)) 
$$
since $f$ is a quandle isomorphism with $f(e)=e'$. 

\noindent
(ii) Note that $\psi=s_e$ holds as maps. Since $f$ is a quandle homomorphism, we have $f \circ s_e = s_{f(e)}' \circ f$, i.e., $f \circ \psi = \psi' \circ f$. 

\noindent
(iii) Since $f|_P$ is a bijection between $P$ and $P'$ by (i), it is enough to show that $f|_P$ becomes a group homomorphism. 
For any $x \in P$, there exist $a_1,\ldots,a_r \in G$ such that $x=s_{a_1} \circ \cdots \circ s_{a_r}(e)$.
As we have already seen, 
\begin{align*}
f(x)=f \circ s_{a_1} \circ \cdots \circ s_{a_r}(e) = s_{f(a_1)}' \circ \cdots \circ s_{f(a_r)}'(e') 
\end{align*}
holds. Thus, for any $x,y \in P$, we have the following: 
\begin{align*}
f(xy)&=f \circ L_x(y)=f \circ s_{a_1} \circ \cdots \circ s_{a_r} (y) \;\; \text{ (by Lemma~\ref{lem_Lx})} \\
&=s_{f(a_1)}' \circ \cdots \circ s_{f(a_r)}' (f(y)) \\
&=s_{f(a_1)}' \circ \cdots \circ s_{f(a_r)}'(e') \cdot f(y) \;\; \text{ ($\ord_{\gauto(G)}\psi=\ord_{\gauto(G')}\psi'$ by Remark~\ref{rem:GneqG'})} \\
&=f(x)f(y).  
\end{align*}

\noindent
(iv) Firstly, we show $f(xP)\subset f(x)P'$. 
For any $x \in G$ and $p=s_{a_1} \circ \cdots \circ s_{a_r}(e) \in P$, we have
\begin{align*}
    f(xp) =& f\circ L_x\circ s_{a_1} \circ \cdots \circ s_{a_r}(e) 
    =
    f\circ  s_{xa_1} \circ \cdots \circ s_{xa_r}(x)\\
    =&
    s'_{f(xa_1)} \circ \cdots \circ s'_{f(xa_r)}(f(x))
    =
    L_{f(x)}\circ s'_{f(x)^{-1}f(xa_1)} \circ \cdots \circ s'_{f(x)^{-1}f(xa_r)}(e')\\
    =&
f(x)\cdot  s'_{f(x)^{-1}f(xa_1)} \circ \cdots \circ s'_{f(x)^{-1}f(xa_r)}(e') \in f(x)P'.
\end{align*}

Next, we show $f(xP)\supset f(x)P'$. 
For any $x \in G$ and $p'=s'_{a'_1} \circ \cdots \circ s'_{a'_r}(e') \in P'$, we have
\begin{align*}
    f(x)p' =& L_{f(x)}\circ  s'_{a'_1} \circ \cdots \circ s'_{a'_r}(e')
    =s'_{f(x)a'_1} \circ \cdots \circ s'_{f(x)a'_r}(f(x))\\
    =&s'_{f(x)a'_1} \circ \cdots \circ s'_{f(x)a'_r}\circ f(x)
    =f\circ s_{f^{-1}(f(x)a'_1)} \circ \cdots \circ s_{f^{-1}(f(x)a'_r)}(x)\\
    =&f\circ L_x \circ s_{x^{-1}f^{-1}(f(x)a'_1)} \circ \cdots \circ s_{x^{-1}f^{-1}(f(x)a'_r)}(e)\in f(xP).
\end{align*}
\end{proof}

By Theorem~\ref{breakthrough}, we can prove Theorem~\ref{simple}. 
\begin{proof}[Proof of Theorem~\ref{simple}]
Let $\psi,\psi' \in \gauto(G)$ and let $Q=Q(G,\psi),Q'=Q(G,\psi')$, respectively. 
Assume that $Q$ and $Q'$ are isomorphic as quandles. 
By Remark~\ref{chuui_P}, we do not consider the case where $\psi$ or $\psi'$ is identity.
Since $G$ is simple, we see that $G=P(Q)=P(Q')$ by Corollary~\ref{cor:PeqG}. 
Thus, $Q=P(Q)$ (resp. $Q'=P(Q')$) as sets.

Let $f:Q \rightarrow Q'$ be a quandle isomorphism with $f(e)=e$. 
Then it follows from Theorem~\ref{breakthrough} (iii) that $f$ is a group automorphism of $G$. 
Moreover, by Theorem~\ref{breakthrough} (ii), we have $f \circ \psi= \psi' \circ f$, i.e., $f \circ \psi \circ f^{-1} = \psi'$. 
This says that $\psi$ and $\psi'$ are conjugate in $\gauto(G)$. 
\end{proof}

\begin{proof}[Proof of Corollary~\ref{cor:S_n}]
When $n\le 4$, it was already proved that $\Qc(\Sf_n)$ one-to-one corresponds to the set of conjugacy classes of $\gauto(\Sf_n)$ in \cite{HK}. 
Thus, we may assume $n\ge 5$. 

Let $\psi,\psi' \in \gauto(\Sf_n) \setminus \{\id\}$ and let $Q=Q(\Sf_n,\psi),Q'=Q(\Sf_n,\psi')$, respectively. 
Since $x$ and $\psi(x)$ have the same parity for any $x\in \Sf_n$, the parity of $x\psi(x^{-1})$ is even, i.e., $P=P(Q) \subset \Af_n$.
It is known that $\Af_n$ is the unique non-trivial normal subgroup of $\Sf_n$ if $n\ge 5$. 
Hence, $P=\Af_n$ holds if $\psi \neq \id$ (see Remark~\ref{chuui_P}). Similarly, we also have $P'=P(Q')=\Af_n$. 

Assume that $Q$ and $Q'$ are isomorphic as quandles. 
Since $\Af_n$ is simple for $n\ge 5$, $\psi|_{\Af_n}$ and $\psi'|_{\Af_n}$ are conjugate in $\gauto(\Af_n)$ by Theorem~\ref{simple}. 
Hence, we can take $h\in \gauto(\Af_n)$ such that $h \circ \psi|_{\Af_n}= \psi'|_{\Af_n} \circ h$. 
It is known that the restriction map $f \mapsto f|_{\Af_n}$ from $\gauto(\Sf_n)$ to $\gauto(\Af_n)$ gives an isomorphism between them (e.g.~\cite[11.4.1]{Scott}). 
Thus, there exists $f\in \gauto(\Sf_n)$ such that $f|_{\Af_n} =h$. Therefore for any $x\in \Af_n$, we have
\begin{align*}
h \circ \psi|_{\Af_n} (x)= \psi'|_{\Af_n} \circ h (x) \;\Longleftrightarrow  f \circ \psi (x)= \psi' \circ f (x) \Longleftrightarrow f \circ \psi \circ f^{-1} (x)=\psi' (x).
\end{align*}
Once we can prove that $f \circ \psi \circ f^{-1} (x)=\psi' (x)$ holds for any $x \in \Sf_n$, we obtain that $\psi$ and $\psi'$ are conjugate. 
Let $\varphi=\psi'^{-1} \circ f \circ \psi \circ f^{-1} \in \gauto(\Sf_n)$. It suffices to show that $\varphi(x)=x$ holds for any $x \in \Sf_n \setminus \Af_n$. 

Given $x,y\in \Sf_n\setminus \Af_n$, i.e., $x$ and $y$ are odd, we know that $x^{-1} y$ is even. Thus we have
\[  x^{-1} y =\varphi (x^{-1} y) \Longleftrightarrow \varphi(x) x^{-1} = \varphi(y) y^{-1}. \]
This means that $\varphi(x) x^{-1}$ is constant for any $x\in \Sf_n\setminus \Af_n$. 
Thus, by letting $a=\varphi (x_0)\cdot x_0^{-1} \in \Af_n$ for some $x_0 \in \Sf_n \setminus \Af_n$, we have $\varphi (x)=a x$ for any $x \in \Sf_n \setminus \Af_n$. 

Fix $x \in \Sf_n\setminus \Af_n$ and we take $y \in \Af_n$ arbitrarily. Note $yx \in \Sf_n\setminus \Af_n$. 
Then we see that $\varphi(yx)=ayx$. On the other hand, we also have $\varphi(yx)=\varphi(y)\varphi(x)=yax$. 
Hence, $a \in Z(\Af_n)$. It is well known that $Z(\Af_n)=\{e\}$. Therefore, $a=e$. 
This means that $\varphi=\id$, as required. 
\end{proof}


\subsection{More properties for $P$ and $P^2$}
\label{sec:PandP2}

\begin{lem}\label{lem:P2_1}
Let $G$ and $G'$ be finite groups and let $\psi \in \gauto(G)$ and $\psi' \in \gauto(G')$. 
Assume that there is a group isomorphism $h : P \rightarrow P'$ satisfying $h \circ \psi|_P = \psi'|_{P'} \circ h$. 
Then $h|_{P^2}$ gives a group isomorphism between $P^2$ and $P'^2$, i.e., $P^2 \cong_\mathrm{gr} P'^2$. 
\end{lem}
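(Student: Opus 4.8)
The plan is to recognize that $P^2 = P(Q(P,\psi|_P))$ and $P'^2 = P(Q(P',\psi'|_{P'}))$ by definition~\eqref{P2}, and that $P^2$, $P'^2$ are intrinsically determined by the quandle structures on $P$, $P'$ together with the distinguished base points $e$, $e'$. So the task reduces to showing that the group isomorphism $h : P \to P'$ with $h \circ \psi|_P = \psi'|_{P'} \circ h$ is actually a \emph{quandle} isomorphism between $Q(P,\psi|_P)$ and $Q(P',\psi'|_{P'})$; once that is known, $h$ carries $\qinn(Q(P,\psi|_P))$-orbits to $\qinn(Q(P',\psi'|_{P'}))$-orbits, and in particular sends the orbit $P^2$ of $e$ to the orbit $P'^2$ of $e' = h(e)$, giving the restriction $h|_{P^2} : P^2 \to P'^2$ as a bijection that is simultaneously a group homomorphism.

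The key computation is to check that $h$ intertwines the point symmetries. For $x,y \in P$, the point symmetry of $Q(P,\psi|_P)$ is $s_x(y) = x\,\psi(x^{-1}y)$, and since $h$ is a group homomorphism and $h \circ \psi|_P = \psi'|_{P'} \circ h$, one has
\begin{align*}
h(s_x(y)) &= h\big(x\,\psi(x^{-1}y)\big) = h(x)\, h\big(\psi(x^{-1}y)\big) = h(x)\, \psi'\big(h(x^{-1}y)\big) \\
&= h(x)\, \psi'\big(h(x)^{-1}h(y)\big) = s'_{h(x)}(h(y)).
\end{align*}
Thus $h \circ s_x = s'_{h(x)} \circ h$ for every $x \in P$, so $h$ is a quandle isomorphism $Q(P,\psi|_P) \to Q(P',\psi'|_{P'})$ (it is bijective by hypothesis). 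Note the relation $h \circ \psi|_P = \psi'|_{P'} \circ h$ is exactly what is needed for the $\psi$-twist to transport correctly; this is where that hypothesis is used.

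Finally, since $h$ is a quandle isomorphism with $h(e) = e'$ (which holds because a group isomorphism sends identity to identity), it follows that $h$ conjugates $\qinn(Q(P,\psi|_P))$ onto $\qinn(Q(P',\psi'|_{P'}))$ and hence maps the orbit $\qinn(Q(P,\psi|_P))\cdot e = P^2$ bijectively onto $\qinn(Q(P',\psi'|_{P'}))\cdot e' = P'^2$. Since $P^2 \subset P$ and $P'^2 \subset P'$ are subgroups (Proposition~\ref{normal_subgroup}(ii)) and $h$ is a group homomorphism, the restriction $h|_{P^2} : P^2 \to P'^2$ is a group isomorphism, so $P^2 \cong_\mathrm{gr} P'^2$. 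I do not anticipate a genuine obstacle here; the only point requiring a moment of care is confirming that $P^2$ really is an intrinsic invariant of $(P, s, e)$ and hence is transported by any base-point-preserving quandle isomorphism — but this is immediate from the orbit description in~\eqref{P2}.
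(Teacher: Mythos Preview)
Your proof is correct and follows essentially the same approach as the paper: both verify via the same computation that $h$ is a quandle isomorphism $Q(P,\psi|_P)\to Q(P',\psi'|_{P'})$ with $h(e)=e'$, and then deduce that $h$ carries $P^2$ onto $P'^2$. The only cosmetic difference is that the paper packages the final step as an appeal to Theorem~\ref{breakthrough}(iii), whereas you argue the orbit-transport directly and use that $h$ is already a group isomorphism by hypothesis (making the group-homomorphism part of Theorem~\ref{breakthrough}(iii) unnecessary here).
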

\begin{proof}
Let $h$ be a group isomorphism satisfying $h \circ \psi|_P = \psi'|_{P'} \circ h$. Then 
\begin{align*}
h(s_a(b))&=h(a \psi(a^{-1}b))=h(a) \cdot h \circ \psi(a^{-1}b)=h(a) \cdot \psi' \circ h(a^{-1}b)=h(a)\psi'(h(a)^{-1}h(b)) \\
&=s_{h(a)}'(h(b))
\end{align*}
for $a,b\in P$.
Hence, $h$ is a quandle automorphism between $P$ and $P'$. 
Therefore, by applying Theorem~\ref{breakthrough} (iii) for $Q(P,\psi|_P)$ and $Q(P',\psi|_{P'})$, we conclude the assertion. 
\end{proof}
\begin{lem}\label{lem:PcapF}
Work with the same notation as in Lemma~\ref{lem:P2_1}. 
Then $|P \cap \fix(\psi,G)|=|P' \cap \fix(\psi',G')|$ holds. 
\end{lem}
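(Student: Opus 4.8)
The plan is to leverage the fact, established already in the proof of Lemma~\ref{lem:P2_1}, that the group isomorphism $h : P \rightarrow P'$ satisfying $h \circ \psi|_P = \psi'|_{P'} \circ h$ is in particular a quandle isomorphism between $Q(P,\psi|_P)$ and $Q(P',\psi'|_{P'})$. So the question reduces to a purely intrinsic one about these two generalized Alexander quandles: I must show that the quandle-isomorphism invariant $|P \cap \fix(\psi,G)|$ is really a function of the quandle $Q(P,\psi|_P)$ alone. Since $h$ carries $e$ to $e'$ (it is a group isomorphism), this is exactly the type of situation handled by the invariants recalled from \cite{HK}.

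First I would identify $P \cap \fix(\psi,G)$ with $\fix(\psi|_P, P)$: indeed $x \in P$ lies in $\fix(\psi,G)$ if and only if $\psi(x)=x$ if and only if $\psi|_P(x)=x$, so $P \cap \fix(\psi,G) = \fix(\psi|_P,P)$ and likewise $P' \cap \fix(\psi',G') = \fix(\psi'|_{P'},P')$. Next, since $h$ is a quandle isomorphism $Q(P,\psi|_P) \to Q(P',\psi'|_{P'})$ with $h(e)=e'$, the third invariant from the list in \cite[Theorem 4.5]{HK} (as extended in Remark~\ref{rem:GneqG'} to the case of different source and target groups) gives $|\fix(\psi|_P,P)| = |\fix(\psi'|_{P'},P')|$. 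Combining the two displayed identities yields $|P \cap \fix(\psi,G)| = |P' \cap \fix(\psi',G')|$, as desired.

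Alternatively, if one prefers a self-contained argument not citing the fixed-point invariant from \cite{HK}, one can argue directly: for a generalized Alexander quandle $Q(H,\chi)$ one has $s_e = \chi$ as maps, hence $\fix(\chi,H) = \fix(s_e, Q(H,\chi)) = \{x \in H : s_e(x)=x\}$, which is visibly a quandle-theoretic quantity once the basepoint $e$ is pinned down. Since $h(e)=e'$ and $h \circ s_e = s'_{e'} \circ h$, the map $h$ restricts to a bijection between $\{x : s_e(x)=x\}$ and $\{x' : s'_{e'}(x')=x'\}$, giving the equality of cardinalities.

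The step requiring the most care is the bookkeeping that $h(e)=e'$: this is automatic because $h$ is assumed to be a \emph{group} isomorphism in the hypothesis of Lemma~\ref{lem:P2_1}, so there is in fact no genuine obstacle here; the lemma is essentially a reinterpretation of the fixed-point invariant, restricted from $G$ to the subgroup $P$. The only thing to be slightly careful about is to invoke the "different groups" version of the invariant (Remark~\ref{rem:GneqG'}) rather than the single-group version, since $P$ and $P'$ need not coincide.
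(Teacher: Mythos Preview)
Your proposal is correct, and your ``alternative'' self-contained argument is exactly the paper's proof: the paper simply chases the equivalences
\[
x \in P \cap \fix(\psi,G) \Longleftrightarrow x \in P,\ \psi(x)=x \Longleftrightarrow h(x)\in P',\ \psi'(h(x))=h(x) \Longleftrightarrow h(x)\in P'\cap\fix(\psi',G'),
\]
using the intertwining relation $h\circ\psi|_P=\psi'|_{P'}\circ h$, so that $h$ itself restricts to the desired bijection. Your primary route via Remark~\ref{rem:GneqG'} is also valid but slightly roundabout, since that invariant is established by essentially the same element-chase you reproduce in the alternative argument.
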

\begin{proof}
We see this assertion as follows: 
\begin{align*}
x \in P \cap \fix(\psi,G) \;&\Longleftrightarrow\; x \in P \text{ and }x=\psi(x) \;\Longleftrightarrow\; h(x) \in P' \text{ and }h(x) = h \circ \psi(x) \\
&\Longleftrightarrow\; h(x) \in P' \text{ and }h(x) = \psi' \circ h(x) \;\Longleftrightarrow\; h(x) \in P' \cap \fix(\psi',G'). 
\end{align*}
\end{proof}

For a finite group $G$ and its subset $H$ and $\psi \in \gauto(G)$, we define the \textit{twisted normalizer} of $H$ with respect to $\psi$ as follows: 
$$\TN^\psi_G(H):=\{x \in G : x H \psi(x)^{-1}=H\}.$$
We see that $\TN^\psi_G(H)$ is a subgroup of $G$. In fact, 
\begin{itemize}
\item $e \in \TN^\psi_G(H)$ since $e H \psi(e)^{-1}=H$; 
\item for $x,y \in \TN^\psi_G(H)$, we have $xyH\psi(xy)^{-1}=x(yH\psi(y)^{-1}))\psi(x)^{-1} = H$; 
\item for $x \in G$, we see that $x H \psi(x)^{-1}=H$ holds if and only if $x^{-1}H \psi(x)=H$ holds.
\end{itemize}

For a while, let $F=\fix(\psi,G)$. 
Since $P$ is a normal subgroup of $G$ (by Proposition~\ref{normal_subgroup}), we have $FP=PF$. 
\begin{lem}\label{hodai:normal_TN}
$P$ is a normal subgroup of $\TN^\psi_G(P^2)$. 
\end{lem}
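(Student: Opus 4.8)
The plan is to show that $P$ is a normal subgroup of $\TN^\psi_G(P^2)$ in two steps: first, that $P$ is actually \emph{contained} in $\TN^\psi_G(P^2)$, and second, that it is normalized by every element of $\TN^\psi_G(P^2)$. For the containment, fix $x \in P$; I need $x P^2 \psi(x)^{-1} = P^2$. Since $P^2$ is a normal subgroup of $P$ (Proposition~\ref{normal_subgroup}(ii)) and $x \in P$, conjugation by $x$ fixes $P^2$ setwise, so $x P^2 x^{-1} = P^2$; it then suffices to observe that $x^{-1}\psi(x)^{-1}$ — equivalently $\psi(x) x$ up to the right normalization — lands in $P^2$. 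More carefully, $x P^2 \psi(x)^{-1} = x P^2 x^{-1} \cdot x \psi(x)^{-1} = P^2 \cdot x\psi(x)^{-1}$, and $x\psi(x)^{-1} = s_x(e)$. Here is where I expect the crux of the argument: I must check that for $x \in P$ the element $s_x(e)$ lies in $P^2 = P(P(Q))$. This should follow because $P$ is itself the generalized Alexander quandle $Q(P,\psi|_P)$ (Proposition~\ref{prop_P}), and by Remark~\ref{rem:Bonatto} applied to that quandle, $P^2 \cong_\mathrm{gr} [P,\psi|_P] = \langle\{p\psi(p)^{-1}: p \in P\}\rangle$, which visibly contains $x\psi(x)^{-1}$ for every $x \in P$. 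Hence $P^2 \cdot x\psi(x)^{-1} = P^2$, giving $x \in \TN^\psi_G(P^2)$ and so $P \subset \TN^\psi_G(P^2)$.

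For the normality, let $y \in \TN^\psi_G(P^2)$ and $x \in P$; I must show $yxy^{-1} \in P$. But $P$ is already known to be a normal subgroup of the \emph{whole} group $G$ by Proposition~\ref{normal_subgroup}(i), and $\TN^\psi_G(P^2) \subset G$, so $yxy^{-1} \in P$ is immediate. Combining the two steps, $P$ is a subgroup of $\TN^\psi_G(P^2)$ that is normalized by all of $\TN^\psi_G(P^2)$, which is exactly the assertion.

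The only genuine content is the containment step, and within it the identification of $s_x(e) = x\psi(x)^{-1}$ as an element of $P^2$. I would lean on Remark~\ref{rem:Bonatto} to make this clean rather than unwinding the definition \eqref{P2} by hand; alternatively, one can argue directly from \eqref{P2} that $s_x(e) = s_x(e)$ is trivially of the stated form with a single factor $p_1 = x \in P$, since $s_x(e) = s_{p_1}(e)$ with $t=1$ — this is even more elementary and avoids invoking the remark at all. I anticipate no further obstacle; the global normality of $P$ in $G$ does all the work for the second half, and the twisted-normalizer manipulation $x P^2 \psi(x)^{-1} = (xP^2x^{-1})(x\psi(x)^{-1})$ is the one line that needs to be written out.
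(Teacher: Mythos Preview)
Your proof is correct. For the containment step you use the clean factorization
\[
x P^2 \psi(x)^{-1} \;=\; (xP^2x^{-1})\,\bigl(x\psi(x)^{-1}\bigr) \;=\; P^2\cdot s_x(e),
\]
invoking only that $P^2\triangleleft P$ (Proposition~\ref{normal_subgroup}(ii)) and that $s_x(e)\in P^2$ by the one-term case of definition~\eqref{P2}; the normality half is identical to the paper's. This is genuinely more elementary than what the paper does: the paper establishes the stronger inclusion $PF\subset\TN^\psi_G(P^2)$ (with $F=\fix(\psi,G)$) via a quandle-theoretic computation that rewrites $(px)q\psi(px)^{-1}$ as a composition $s_{p\tilde p_1}\circ\cdots\circ s_{p\tilde p_r}\circ s_p(e)$ of point symmetries at elements of $P$. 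The payoff of the paper's longer route is that the inclusion $PF\subset\TN^\psi_G(P^2)$ is exactly what is needed to prove the reverse inclusion in Lemma~\ref{lem:PF}; your argument gives the lemma as stated but does not yield that stronger fact, so if you adopt your proof you would still owe a separate verification of $PF\subset\TN^\psi_G(P^2)$ later on.
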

\begin{proof}
\noindent\underline{$P < \TN^\psi_G(P^2)$}:  
Since $P \subset PF$ is trivial, for the proof that $P$ is a subgroup of $\TN^\psi_G(P^2)$, 
it is enough to show the inclusion $PF \subset \TN^\psi_G(P^2)$.  

For any $p_1,p_2 \in P$, any $x \in F$ and any $y\in G$, we know that 
\begin{align*}
s_{p_1xp_2}(y )&=s_{p_1\tilde{p_2}x}(y ) \;\;\text{(for some $\tilde{p_2} \in P$ since $P \triangleleft G$)} \\
&=p_1\tilde{p_2}x\psi(x^{-1}\tilde{p_2}^{-1}p_1^{-1} y ) =p_1\tilde{p_2}\psi(\tilde{p_2}^{-1}p_1^{-1} y) \;\;\text{(since }x \in F) \\
&=s_{p_1\tilde{p_2}}(y ).     
\end{align*}
Fix $p \in P$ and $x \in F$. 
For each $a \in P$, let $\tilde{a}$ denote the element of $P$ satisfying $s_{pxa}=s_{p\tilde{a}}$.

Take $p \in P$ and $x \in F$ arbitrarily. 
By Lemma~\ref{lem_Lx}, for an arbitrary $q \in P^2$, write $L_q=s_{p_1} \circ \cdots \circ s_{p_r}$, where $p_i \in P$.
Then we see that 
\begin{align*}
(px)q\psi(px)^{-1}&=L_{px} \circ L_q (\psi(px)^{-1})=
L_{px}\circ s_{p_1} \circ \cdots \circ s_{p_r}(\psi(px)^{-1})\\
&=s_{pxp_1} \circ \cdots \circ s_{pxp_r}(px \psi(px)^{-1} )
=s_{p \tilde{p_1}} \circ \cdots \circ s_{p \tilde{p}_r}( p \psi(p^{-1}) )\\
&=s_{p \tilde{p_1}} \circ \cdots \circ s_{p \tilde{p}_r}\circ s_p(e) \in P^2. 
\end{align*}
Hence $(px) P^2 \psi(px)^{-1} \subset P^2$ holds. Since $|(px) P^2 \psi(px)^{-1}|=| P^2|$, this implies that $(px) P^2 \psi(px)^{-1} = P^2$.
Therefore, one has $px \in \TN^\psi_G(P^2)$. 

\noindent\underline{$P \triangleleft \TN^\psi_G(P^2)$}:  
Take $p \in P$ and $x \in \TN^\psi_G(P^2)$ arbitrarily. Since $P$ is a normal subgroup of $G$ and $x \in \TN^\psi_G(P^2) \subset G$, 
we see that $xpx^{-1} \in P$, as required. 
\end{proof}

\begin{lem}
Let $Q=Q(G,\psi)$ and $Q'=Q(G',\psi')$ be two generalized Alexander quandles and assume that $Q \cong_\mathrm{qu} Q'$. 
Then $|\TN^\psi_G(P^2)|=|\TN^{\psi'}_{G'}({P'}^2)|$. 
\end{lem}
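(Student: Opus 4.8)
The plan is to show that the chosen quandle isomorphism itself carries one twisted normalizer onto the other. Fix a quandle isomorphism $f\colon Q\to Q'$ with $f(e)=e'$, which exists by hypothesis. By Theorem~\ref{breakthrough}(i)--(iii) and Lemma~\ref{lem:P2_1} (with $h=f|_P$) we have $f(P)=P'$, $f\circ\psi=\psi'\circ f$, and $f|_{P^2}$ is a group isomorphism onto ${P'}^2$, so in particular $f(P^2)={P'}^2$. Also write $m=\ord_{\gauto(G)}(\psi)=\ord_{\gauto(G')}(\psi')$, the equality being part of Remark~\ref{rem:GneqG'}.

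The key device is to twist $f$ by left translations so that Theorem~\ref{breakthrough} becomes applicable ``based at each point''. For $x\in G$ put $F_x:=L_{f(x)^{-1}}\circ f\circ L_x$. Since every left translation is a quandle automorphism (as recorded at the beginning of Section~\ref{sec:proofs}), $F_x$ is a quandle isomorphism $Q\to Q'$ with $F_x(e)=f(x)^{-1}f(x)=e'$. Applying Theorem~\ref{breakthrough}(i)--(iii) to $F_x$ shows that $g_x:=F_x|_P$ is a group isomorphism $P\to P'$ with $g_x\circ\psi|_P=\psi'|_{P'}\circ g_x$, and then Lemma~\ref{lem:P2_1} (with $h=g_x$) gives $g_x(P^2)={P'}^2$. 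Unravelling the definition of $F_x$, this is the formula $f(xp)=f(x)\,g_x(p)$ valid for all $x\in G$ and $p\in P$.

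Next I would prove the identity
\begin{equation}\label{eq:TNstar}
f\bigl(x\,q\,\psi(x)^{-1}\bigr)=f(x)\,g_x(q)\,\psi'(f(x))^{-1}\qquad(x\in G,\ q\in P^2).
\end{equation}
Given $q\in P^2$, the order trick used in the proof of Lemma~\ref{lem_Lx} lets us write $L_q=s_{p_1}\circ\cdots\circ s_{p_r}$ with all $p_i\in P$ and $r$ a multiple of $m$; then, using $L_x\circ s_{p_i}=s_{xp_i}\circ L_x$ and $s_x(e)=x\psi(x)^{-1}$, one obtains $x\,q\,\psi(x)^{-1}=s_{xp_1}\circ\cdots\circ s_{xp_r}(s_x(e))$. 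Applying $f$, then $f(xp_i)=f(x)g_x(p_i)$ and the identity $s'_{ab}=L_a\circ s'_b\circ L_{a^{-1}}$, moves the left translations to the outside and reduces everything to identifying the composite $s'_{g_x(p_1)}\circ\cdots\circ s'_{g_x(p_r)}$ as a self-map of $G'$. This globalization is the one point that needs care: on $P'$ the composite equals $L_{g_x(q)}|_{P'}$ since $g_x$ is simultaneously a quandle homomorphism and a group isomorphism, and since $r$ is a multiple of $m=\ord_{\gauto(G')}(\psi')$ (so ${\psi'}^{r}=\id$) its value at an arbitrary $z'\in G'$ is $\bigl(s'_{g_x(p_1)}\circ\cdots\circ s'_{g_x(p_r)}(e')\bigr)\cdot z'=g_x(q)\,z'$; hence the composite is $L_{g_x(q)}$ on all of $G'$. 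Substituting back gives \eqref{eq:TNstar}.

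Finally, \eqref{eq:TNstar} finishes the proof. Since $q\mapsto x\,q\,\psi(x)^{-1}$ is injective, $x\in\TN^\psi_G(P^2)$ if and only if $x\,q\,\psi(x)^{-1}\in P^2$ for every $q\in P^2$; applying the bijection $f$ and using $f(P^2)={P'}^2$ together with \eqref{eq:TNstar} and $g_x(P^2)={P'}^2$, this is equivalent to $f(x)\,q'\,\psi'(f(x))^{-1}\in{P'}^2$ for every $q'\in{P'}^2$, i.e.\ to $f(x)\in\TN^{\psi'}_{G'}({P'}^2)$. Thus $f$ restricts to a bijection from $\TN^\psi_G(P^2)$ onto $\TN^{\psi'}_{G'}({P'}^2)$, and in particular $|\TN^\psi_G(P^2)|=|\TN^{\psi'}_{G'}({P'}^2)|$. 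The main obstacle is the global identification of $s'_{g_x(p_1)}\circ\cdots\circ s'_{g_x(p_r)}$ with $L_{g_x(q)}$ inside the proof of \eqref{eq:TNstar}; everything else is routine manipulation with Theorem~\ref{breakthrough} and Lemma~\ref{lem:P2_1}.
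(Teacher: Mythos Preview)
Your proof is correct and follows essentially the same route as the paper: both arguments express $f(x)\,q'\,\psi'(f(x))^{-1}$ (respectively its preimage) as a composition of point symmetries and transfer it across $f$ using the coset relation $f(xP)=f(x)P'$ from Theorem~\ref{breakthrough}(iv)---your map $g_x$ is exactly that bijection $p\mapsto f(x)^{-1}f(xp)$ made explicit. The only difference is organizational: by introducing the rebased isomorphism $F_x=L_{f(x)^{-1}}\circ f\circ L_x$ you obtain a two-sided identity and hence a direct bijection between the twisted normalizers, whereas the paper computes one inclusion and then appeals to symmetry for the other.
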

\begin{proof}
Let $f : Q \rightarrow Q'$ be a quandle automorphism with $f(e)=e'$. 
Then, by Theorem~\ref{breakthrough}, we know that $f|_P$ is a group homomorphism between $P$ and $P'$. 

Once we prove that $f(x) \in \TN^{\psi'}_{G'}(P'^2)$ for any $x \in \TN^\psi_G(P^2)$, since $f$ is injective, we obtain that $|\TN^\psi_G(P^2)| \leq |\TN^{\psi'}_{G'}(P'^2)|$. 
By applying the same discussion, we also obtain the other inequality, as required. 

Let $x \in \TN^\psi_G(P^2)$. For a given $q' \in P'^2$, 
by Lemma~\ref{lem_Lx}, let $L_{q'}=s_{p_1'}' \circ \cdots \circ s_{p_r'}'$, where $p_i' \in P'$ and $r \equiv 0 \pmod{\ord_{\gauto(G')} \psi'}$. 
Then we see the following: 
\begin{align*}
f(x)q'\psi'(f(x))^{-1}
&=L_{f(x)}\circ L_{q'}(\psi'(f(x))^{-1})
=L_{f(x)}\circ s_{p_1'}' \circ \cdots \circ s_{p_r'}'(\psi'(f(x))^{-1}) \\ 
&=s_{f(x)p_1'}' \circ \cdots \circ s_{f(x)p_r'}'(f(x)\psi'(f(x))^{-1})
=s_{f(x)p_1'}' \circ \cdots \circ s_{f(x)p_r'}'\circ s'_{f(x)}(e')\\
&=f\left(s_{xp_1} \circ \cdots \circ s_{xp_r}\circ s_x (e)\right) \\
&\text{(where we let $f(xp_i)=f(x)p_i'$ for each $i$ with $p_i \in P$, see Theorem~\ref{breakthrough} (iv))} \\
&=f\left(L_x \circ s_{p_1} \circ \cdots \circ s_{p_r} \circ s_e(x^{-1}) \right)
=f\left(x\cdot  s_{p_1} \circ \cdots \circ s_{p_r}(e) \cdot s_e(x^{-1}) \right) \\
&=f(x \cdot \underbrace{s_{p_1} \circ \cdots \circ s_{p_r}(e)}_{\in P^2} \cdot \psi(x)^{-1}) \in f(P^2)=P'^2 \;\;\text{(since $x \in \TN^\psi_G(P^2)$)}. 
\end{align*}
This implies that $f(x) \in \TN^{\psi'}_{G'}(P'^2)$. 
\end{proof}

We introduce the following two conditions \ref{G-normal} and \ref{iikanji}. 
\begin{enumerate}[label=(P\arabic*)]
    \item \label{G-normal} $P^2$ is a normal subgroup of $G$.
    \item \label{iikanji} $P^2=\{s_p(e) : p \in P\}$ holds.
\end{enumerate}
\begin{rem}
(i)
For \ref{G-normal}, note that $H \triangleleft G$ and $K \triangleleft H$ do not imply $K \triangleleft G$ in general. 
For \ref{iikanji}, note that the inclusion $\supset$ is always true, but $\subset$ does not necessarily hold.

(ii)
The conditions \ref{G-normal} and \ref{iikanji} are independent.
In fact, $Q(Q_8,\psi_4)$ in Table~\ref{tab:Q8} in Section~\ref{sec:8}
satisfies \ref{G-normal} and does not satisfy \ref{iikanji}.
On the other hand, the following $Q'$ does not satisfy \ref{G-normal} and satisfies \ref{iikanji};
$Q'=Q(G',\psi')$ with $G'=\Sf_3\times \Sf_3$ and $\psi'\colon G' \to G'$ defined by $(x,y)\mapsto (y,x)$.
\end{rem}

The following proposition claims that \ref{G-normal} and \ref{iikanji} are quandle invariants. 

\begin{prop}
Let $Q=Q(G,\psi)$ and $Q'=Q(G',\psi')$ be two generalized Alexander quandles and assume that $Q \cong_\mathrm{qu} Q'$. 
\begin{itemize}
       \item[{\em (i)}] $Q$ satisfies \ref{G-normal} if and only if $Q'$ satisfies \ref{G-normal}. 
       \item[{\em (ii)}] $Q$ satisfies \ref{iikanji} if and only if $Q'$ satisfies \ref{iikanji}. 
\end{itemize}        
\end{prop}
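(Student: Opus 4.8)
The plan is to leverage the quandle isomorphism $f : Q \to Q'$ (chosen with $f(e) = e'$) together with Theorem~\ref{breakthrough}, which already tells us that $f|_P$ is a group isomorphism $P \to P'$ with $f \circ \psi = \psi' \circ f$, that $f|_{P^2}$ is a group isomorphism $P^2 \to P'^2$ (via Lemma~\ref{lem:P2_1} applied to $h = f|_P$), and that $f(xP) = f(x)P'$ for all $x \in G$. By symmetry, it suffices in each part to prove one implication, say that $Q$ satisfies the condition $\Rightarrow$ $Q'$ satisfies it.

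For part (ii): assume $Q$ satisfies \ref{iikanji}, i.e.\ $P^2 = \{s_p(e) : p \in P\}$. The inclusion $\supset$ always holds for $Q'$, so I only need $P'^2 \subset \{s'_{p'}(e') : p' \in P'\}$. Take $q' \in P'^2$; then $q' = f(q)$ for a unique $q \in P^2$, and by hypothesis $q = s_p(e)$ for some $p \in P$. Applying $f$ and the quandle-homomorphism identity, $q' = f(s_p(e)) = s'_{f(p)}(f(e)) = s'_{f(p)}(e')$, and $f(p) \in P'$ since $f(P) = P'$. This gives the desired inclusion, so $Q'$ satisfies \ref{iikanji}.

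For part (i): assume $Q$ satisfies \ref{G-normal}, i.e.\ $P^2 \triangleleft G$. I want $P'^2 \triangleleft G'$. The natural approach is: since $Q$ satisfies \ref{G-normal}, every element of $G$ normalizes $P^2$, so $\TN^\psi_G(P^2)$ contains $G$ itself — indeed $g P^2 \psi(g)^{-1} = g P^2 g^{-1} \cdot (g\psi(g)^{-1})^{-1}$... wait, that needs care; better: for $g \in G$, $g P^2 \psi(g)^{-1}$. Since $P^2 \triangleleft G$ we have $g P^2 = P^2 g$, and $g \psi(g)^{-1} = s_g(e) \in P \subset \TN^\psi_G(P^2)$ would follow from Lemma~\ref{hodai:normal_TN} once we know $P < \TN^\psi_G(P^2)$, which that lemma supplies unconditionally. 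Actually the cleanest route uses the preceding lemma: $|\TN^\psi_G(P^2)| = |\TN^{\psi'}_{G'}(P'^2)|$. Combined with \ref{G-normal} for $Q$, which forces $\TN^\psi_G(P^2) = G$ (one checks $G \subset \TN^\psi_G(P^2)$: for $g \in G$, $g P^2 \psi(g)^{-1} \subset P^2$ because $g P^2 g^{-1} = P^2$ and $g\psi(g^{-1}) = s_g(e) \in P$ normalizes $P^2$ by Lemma~\ref{hodai:normal_TN}, and equality follows by cardinality), we get $|\TN^{\psi'}_{G'}(P'^2)| = |G| = |G'|$ by Remark~\ref{rem:GneqG'}, hence $\TN^{\psi'}_{G'}(P'^2) = G'$. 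Finally, $\TN^{\psi'}_{G'}(P'^2) = G'$ together with $P' < \TN^{\psi'}_{G'}(P'^2)$ (Lemma~\ref{hodai:normal_TN} again) gives, for any $g' \in G'$ and using $\psi'^{-1}$ as well, $g' P'^2 g'^{-1} = g' P'^2 \psi'(g')^{-1} \cdot \psi'(g') g'^{-1}$; since $g' P'^2 \psi'(g')^{-1} = P'^2$ and $\psi'(g')g'^{-1} = s'_{g'}(e')^{-1} \in P'$ normalizes $P'^2$, we conclude $g' P'^2 g'^{-1} = P'^2$, i.e.\ $P'^2 \triangleleft G'$.

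The main obstacle is the implication \ref{G-normal}$\Rightarrow$\ref{G-normal}, specifically the bookkeeping that turns "$\TN^\psi_G(P^2) = G$" into "$P^2 \triangleleft G$" and back: one must carefully use that $P$ (hence $P' = f(P)$) sits inside the twisted normalizer and normalizes $P^2$ (resp.\ $P'^2$) in the ordinary sense, so that the twisted normalizer equalling the whole group upgrades to ordinary normality. The cardinality comparison $|\TN^\psi_G(P^2)| = |\TN^{\psi'}_{G'}(P'^2)|$ is exactly the content of the lemma immediately preceding the conditions \ref{G-normal}, \ref{iikanji}, so it is available; the remaining work is the elementary manipulation with $s_g(e) = g\psi(g)^{-1}$ and Lemma~\ref{hodai:normal_TN}. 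Part (ii) is routine by contrast.
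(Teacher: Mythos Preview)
Your argument for part (ii) is correct and essentially identical to the paper's.

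Your argument for part (i), however, has a genuine gap: the implication ``$P^2 \triangleleft G \Rightarrow \TN^\psi_G(P^2) = G$'' is false in general, and so is the reverse implication you use in step~4. Concretely, assuming $P^2 \triangleleft G$ one computes
\[
gP^2\psi(g)^{-1} \;=\; (gP^2g^{-1})\bigl(g\psi(g)^{-1}\bigr) \;=\; P^2\, s_g(e),
\]
which is a right coset of $P^2$; it equals $P^2$ if and only if $s_g(e)\in P^2$. The fact you invoke---that $s_g(e)\in P$ and $P$ lies in $\TN^\psi_G(P^2)$ (Lemma~\ref{hodai:normal_TN}), or that $P^2\triangleleft P$---does not force $s_g(e)\in P^2$. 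A concrete counterexample: $G=D_4$, $\psi=\varphi_{1,1}$. Here $P=\langle\sigma\rangle\cong C_4$, $P^2=\{e\}$, so $P^2\triangleleft G$ trivially; but $\TN^\psi_G(\{e\})=\{x:\psi(x)=x\}=\fix(\psi,G)=\langle\sigma\rangle\neq D_4$. The same obstruction breaks step~4: from $\TN^{\psi'}_{G'}(P'^2)=G'$ one gets $g'P'^2g'^{-1}=P'^2\, s'_{g'}(e')^{-1}$, and again this is only $P'^2$ when $s'_{g'}(e')\in P'^2$. So the twisted-normalizer cardinality lemma, while correct, does not carry the normality information you need.

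The paper's proof of (i) avoids the twisted normalizer entirely and works directly: given $a'\in G'$ and $q'\in P'^2$, write $a'=f(a)$ and $L_{q'}=s'_{p'_1}\circ\cdots\circ s'_{p'_r}$ with $p'_i\in P'$; then using Theorem~\ref{breakthrough}~(iv) to rewrite $f(a)p'_i=f(ap_i)$ for suitable $p_i\in P$, one obtains
\[
a'q'a'^{-1}=f\bigl(a\cdot s_{p_1}\circ\cdots\circ s_{p_r}(e)\cdot a^{-1}\bigr),
\]
and the inner element lies in $P^2$ by \ref{G-normal} for $Q$, hence its image under $f$ lies in $P'^2$ by Lemma~\ref{lem:P2_1}. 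This is the missing mechanism: you need to transport conjugation through $f$ via the identity $L_a\circ s_{p}=s_{ap}\circ L_a$ and Theorem~\ref{breakthrough}~(iv), rather than via a global statement about $\TN^\psi_G(P^2)$.
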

\begin{proof}
Take a quandle isomorphism $f\colon Q\to Q'$ with $f(e)=e'$.

(i) Assume that $Q$ satisfies \ref{G-normal}. For $a'\in G'$ and $q'\in P'^2$, we prove $a' q' a'^{-1} \in P'^2$.
Then there exists $a\in G$ such that $f(a)=a'$.
By Lemma~\ref{lem_Lx}, 
let $L_q'=s'_{p'_1}\circ \cdots \circ s'_{p'_r} $,
where $p'_i\in P'$ and $r \equiv 0 \pmod{\ord_{\gauto(G')} \psi'}$. 
Then we have
    \begin{align*}
        a' q' a'^{-1}  &= f(a) q' f(a)^{-1}=L_{f(a)}\circ L_{q'}(f(a)^{-1})\\
        &= L_{f(a)} \circ s'_{p'_1}\circ \cdots \circ s'_{p'_r} ( f(a)^{-1})\\
        &= s'_{f(a) p'_1}\circ \cdots \circ s'_{f(a) p'_r} (e') \\
        &= s'_{f(a p_1)}\circ \cdots \circ s'_{f(a p_r)} (e')\quad (\text{by Theorem~\ref{breakthrough} (iv)})\\
        &= f(s_{a p_1}\circ \cdots \circ s_{a p_r} (e) )\\
        &= f(a \cdot s_{p_1}\circ \cdots \circ s_{p_r} (e) \cdot a^{-1}). 
    \end{align*} 
    Since $Q$ satisfies \ref{G-normal}, we have $a \cdot s_{p_1}\circ \cdots \circ s_{p_r} (e) \cdot a^{-1} \in P^2$.
    By Lemma~\ref{lem:P2_1}, we have $a' q' a'^{-1}\in P'^2$, i.e., $Q'$ satisfies \ref{G-normal}.

    (ii) Assume that $Q$ satisfies \ref{iikanji}. For $q'\in P'^2$, we prove that there exists $p'\in P'$ such that $q'=s'_{p'}(e')$.
    By Lemma~\ref{lem:P2_1}, there exists $q\in P^2$ such that $q'=f(q)$. Since $Q$ satisfies \ref{iikanji}, there exists $p\in P$ such that $q=s_p(e)$.
    Then we have $q'=f(s_p (e))=s'_{f(p)} (e')$. By Theorem~\ref{breakthrough} (i), we have $f(p)\in P'$.
    Hence $Q'$ satisfies \ref{iikanji}. 
\end{proof}

\begin{lem}\label{lem:PF}
Assume that $Q$ satisfies \ref{G-normal} and \ref{iikanji}. Then $\TN^\psi_G(P^2)=PF$. 
\end{lem}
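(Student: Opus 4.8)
The plan is to establish the two inclusions $PF \subseteq \TN^\psi_G(P^2)$ and $\TN^\psi_G(P^2) \subseteq PF$ separately. Condition \ref{G-normal} will be used to rewrite the twisted conjugate $x P^2 \psi(x)^{-1}$ as an ordinary coset of $P^2$, and condition \ref{iikanji} will be used to invert the map $p \mapsto s_p(e)$ on $P^2$. Here I write $F=\fix(\psi,G)$ as in the paragraph preceding Lemma~\ref{hodai:normal_TN}.

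For $PF \subseteq \TN^\psi_G(P^2)$: the inclusion $P \subseteq \TN^\psi_G(P^2)$ is already contained in Lemma~\ref{hodai:normal_TN}, so it suffices to check $F \subseteq \TN^\psi_G(P^2)$. If $y \in F$ then $\psi(y)=y$, hence $y P^2 \psi(y)^{-1} = y P^2 y^{-1} = P^2$ by \ref{G-normal}. Since $\TN^\psi_G(P^2)$ is a subgroup of $G$ and $PF$ is the subgroup generated by $P \cup F$ (recall $P \triangleleft G$ by Proposition~\ref{normal_subgroup}, so $PF=FP$ is a subgroup), we conclude $PF \subseteq \TN^\psi_G(P^2)$.

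For the reverse inclusion, first note that under \ref{G-normal}, for every $x \in G$ we have $x P^2 \psi(x)^{-1} = (x P^2 x^{-1})(x\psi(x)^{-1}) = P^2 \cdot x\psi(x)^{-1}$, so $x \in \TN^\psi_G(P^2)$ if and only if $x\psi(x)^{-1} \in P^2$, i.e.\ $s_x(e) \in P^2$ (using $s_x(e) = x\psi(x^{-1}) = x\psi(x)^{-1}$). Now take $x \in \TN^\psi_G(P^2)$. By \ref{iikanji} there is $p \in P$ with $s_x(e) = s_p(e)$, that is $x\psi(x)^{-1} = p\psi(p)^{-1}$; rearranging gives $p^{-1}x = \psi(p)^{-1}\psi(x) = \psi(p^{-1}x)$, so $p^{-1}x \in F$ and therefore $x \in pF \subseteq PF$. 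The computations are routine once set up this way; the only point worth watching is which hypothesis does what — \ref{G-normal} is what converts the one-sided expression $xP^2\psi(x)^{-1}$ into the two-sided coset $P^2\cdot x\psi(x)^{-1}$ (and places $F$ inside the twisted normalizer), while \ref{iikanji} is what lets one solve $s_x(e)\in P^2$ for an honest element $p\in P$. I therefore do not expect a genuine obstacle here beyond this bookkeeping.
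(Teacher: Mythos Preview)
Your proof is correct and follows essentially the same route as the paper's: the paper likewise cites the proof of Lemma~\ref{hodai:normal_TN} for the inclusion $PF\subset \TN^\psi_G(P^2)$, and for the reverse inclusion uses \ref{G-normal} to deduce $x\psi(x)^{-1}\in P^2$ and then \ref{iikanji} to find $p\in P$ with $s_p(e)=x\psi(x)^{-1}$, concluding $p^{-1}x\in F$. The only cosmetic difference is that the paper gets $PF\subset \TN^\psi_G(P^2)$ directly from the earlier lemma's proof (which did not need \ref{G-normal}), whereas you re-derive the $F$-part via \ref{G-normal}; both are fine under the hypotheses of Lemma~\ref{lem:PF}.
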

\begin{proof}
Since $PF \subset \TN^\psi_G(P^2)$ is always true by the proof of Lemma~\ref{hodai:normal_TN}, we may prove the other inclusion. 

Take $x \in \TN^\psi_G(P^2)$. Then $x P^2 \psi(x)^{-1}=P^2$. 
Since $P^2$ is a normal subgroup of $G$ by \ref{G-normal}, 
we have $P^2\psi^{-1}(x) = \psi^{-1}(x)P^2$. Hence, $x \psi(x)^{-1} \in P^2$, i.e., there is $q \in P^2$ with $x\psi(x)^{-1}=q$.  
By \ref{iikanji}, there is $p \in P$ such that $q=s_p(e)$, i.e., $x\psi(x^{-1})=s_p(e)=p\psi(p^{-1})$. 
Hence, $p^{-1}x=\psi(p^{-1}x)$, i.e., $p^{-1}x \in F$. Therefore, $x \in PF$, as desired. 
\end{proof}

\subsection{Proof of Theorem~\ref{monosugoi}}
Now, we are in the position to give a proof of Theorem~\ref{monosugoi}. 
The whole part of this subsection is devoted to giving its proof. 

\medskip

First, we prove the ``Only if'' part. 
Assume that $Q \cong_\mathrm{qu} Q'$. Then $|G|=|G'|$ trivially follows and $|\fix(\psi,G)|=|\fix(\psi',G')|$ also follows by Remark~\ref{rem:GneqG'}. 
Let $f : Q \rightarrow Q'$ be a quandle automorphism with $f(e)=e'$ and let $h=f|_P$. 
Then $h$ becomes a group homomorphism satisfying $h \circ \psi|_P = \psi'|_{P'} \circ h$ (see Theorem~\ref{breakthrough}) 
and (C-2) easily follows from $f(s_a(e))=f|_P(s_a(e))=h(s_a(e))$ and $f(s_a(e))=s_{f(a)}'(e')$, where $f(a) \in G'$. 

\medskip

In the remaining part, we give the proof of the ``If'' part. 
Let us assume the conditions (A), (B) and (C). 
We define the set $S$ (resp. $S'$) of coset representatives of $P^2$ (resp. $P'^2$) as follows: 
$$S=\{s_a(e)P^2 : a \in G\}\text{ and }S'=\{s_{a'}'(e')P'^2 : a' \in G'\}.$$

\noindent
{\bf The first step}: 
First, we compute $|S|$. For this, let 
$$\pi : G/P \rightarrow S, \;\; \pi(aP)=s_a(e)P^2.$$
The well-definedness is verified as follows: for any $p \in P$ and $q \in P^2$, we see that 
\begin{align*}
s_{ap}(e)q&=ap\psi(ap)^{-1}q=ap\psi(p^{-1})\psi(a)^{-1}q=a\underbrace{s_p(e)}_{\in P^2}\psi(a)^{-1}q \\
&=a\psi(a)^{-1}\tilde{q} q, \;\text{where $\tilde{q} \in P^2$ (by \ref{G-normal})} \\
&\in s_a(e) P^2. 
\end{align*}
Since $\pi$ is surjective, for the computation of $|S|$, we may compute $|\pi^{-1}(s_a(e)P^2)|$. We see the following: 
\begin{align*}
&bP \in \pi^{-1}(s_a(e)P^2) \;\Longleftrightarrow\; \pi(bP) =s_a(e)P^2 \;\Longleftrightarrow\; s_b(e)P^2=s_a(e)P^2  \\
\Longleftrightarrow\;\; &b\psi(b)^{-1}P^2 = a\psi(a)^{-1}P^2 \;\Longleftrightarrow\; bP^2\psi(b)^{-1} = aP^2\psi(a)^{-1} \text{ (by \ref{G-normal})} \\
\Longleftrightarrow\;\; &a^{-1}bP^2\psi(a^{-1}b)^{-1} = P^2 \;\Longleftrightarrow\; a^{-1}b \in \TN^\psi_G(P^2)=PF \text{ (by Lemma~\ref{lem:PF})}\\
\Longleftrightarrow\;\; &a^{-1}bP \subset PF \;\Longleftrightarrow\; bP\subset aPF.
\end{align*}
This implies that $|\pi^{-1}(s_a(e)P^2)|=|PF/P|=|F/P \cap F|$. Hence, 
$$|S|=\frac{|G/P|}{|\pi^{-1}(s_a(e)P^2)|}=\frac{|G/P|}{|F/P \cap F|} = \frac{|G| \cdot |P \cap F|}{|P| \cdot |F|}.$$

\noindent
{\bf The second step}: We construct a bijection between $S$ and $S'$ using $h$. 
Firstly, we define the map $\tilde{h}\colon P/P^2 \to P'/P'^2$ by setting $\tilde{h}(pP^2):=h(p)P'^2$.
Since 
\begin{align*}
    p_1 P^2 =p_2 P^2
    &\;\Longleftrightarrow\;
    p_2^{-1} p_1 \in P^2
    \;\Longleftrightarrow\;
    h(p_2^{-1} p_1) \in P'^2 \quad (\text{by Lemma~\ref{lem:P2_1}})\\
    &\;\Longleftrightarrow\;
    h(p_1)P'^2=h(p_2)P'^2,
\end{align*}
we see that $\tilde{h}$ is well-defined and injective. 
The surjectivity of $\tilde{h}$ follows by the the surjectivity of $h|_{P^2}\colon P^2 \to P'^2$.
Hence, $\tilde{h}$ is a bijection between $P/P^2$ and $P'/P'^2$. 
Next, we can show that the restriction $\tilde{h}|_S$ of $\tilde{h}$ into $S$ gives a bijection between $S$ and $S'$.
\begin{itemize}
\item We have $\tilde{h}(S) \subset S'$ since $\tilde{h}(s_a(e)P^2)=h(s_a(e))P'^2=s_{a'}'(e')P'^2$ by (C-2).
\item Since $\tilde{h}$ is injective, $\tilde{h}|_S$ is also injective.
\item Moreover, we have $|S|=|S'|$. In fact, by our assumptions, we have $|G|=|G'|$, $|F|=|F'|$ and $|P|=|P'|$ and 
$|P \cap F|=|P' \cap F'|$ by Lemma~\ref{lem:PcapF}. Hence, by the first step, we see that 
\begin{align*}
|S|=\frac{|G| \cdot |P \cap F|}{|P| \cdot |F|}=\frac{|G'| \cdot |P' \cap F'|}{|P'| \cdot |F'|}=|S'|. 
\end{align*}
\end{itemize}
Therefore, $\tilde{h}$ gives a bijection between $S$ and $S'$.

\noindent
{\bf The third step}: 
Let $A\subset G$ be a set of coset representatives of $G/P$.
We construct a map $k\colon A\to G'$ such that $k(A)$ is a set of coset representatives of $G'/P'$ and $h(s_a(e))=s'_{k(a)}(e')$ holds for any $a\in A$.

Let $\pi':G'/P' \rightarrow S'$ similarly to $\pi$.
Firstly, we construct a bijective map $k\colon G/P\to G'/P'$ such that $\tilde{h}|_S(\pi(aP))=\pi'(k(aP'))$ holds for any $aP\in G/P$.
\[
  \xymatrix{
    G/P \ar[r]^-{\simeq}_-{k} \ar@{->>}[d]_{\pi} &  G'/P'  \ar@{->>}[d]^{\pi'}\\
     S \ar[r]^-{\simeq}_-{\tilde{h}|_S}  & S'  \ar@{}[lu]|{\circlearrowright}
  }
\]

Since $\displaystyle |\pi^{-1}(\pi(aP))|=\frac{|F|}{|P \cap F|} =\frac{|F'|}{|P' \cap F'|}=|\pi'^{-1}(\tilde{h}(\pi(aP)))|$, 
we can construct a bijection $k_a$ between $\pi^{-1}(\pi(aP))$ and $\pi'^{-1}(\tilde{h}(\pi(aP)))$ for each fixed $aP \in G/P$. 
Thus, we can construct a bijection $k$ between $G/P$ and $G'/P'$ by attaching $k_a$'s for $aP \in G/P$.

Next, we apply $k$ to $A$ using the same symbol.
Then for any $a \in A$, we have $h(s_a(e))P'^2=s'_{k(a)}(e')P'^2$, i.e., $h(s_a(e))=s_{k(a)}'(e')q'$ for some $q' \in P'^2$. 
Here, we see that 
\begin{align*}
s'_{k(a)}(e')q' &= k(a)\psi'(k(a))^{-1}q'=k(a)\tilde{q}'\psi'(k(a))^{-1} \text{ (for some }\tilde{q}'\text{ since $Q'$ satisfies \ref{G-normal})} \\
&=k(a)s_{p'}'(e')\psi'(k(a))^{-1} \text{ (since $Q'$ satisfies \ref{iikanji})} \\
&=k(a)p'\psi'(p')^{-1}\psi'(k(a))^{-1} = k(a)p'\psi'((k(a)p')^{-1}) \\
&=s_{k(a)p'}'(e'). 
\end{align*}
Note that $k(a)$ and $k(a)p'$ belong to the same coset of $G'/P'$.
By replacing $k(a)$ into $k(a)p'$, we can construct the desired map $k:A \rightarrow G'$.

\noindent
{\bf The fourth step}: Now, we can construct a quandle automorphism between $Q$ and $Q'$ 
by using the bijections $h:P \rightarrow P'$ (guaranteed by the condition (C)) and $k:A \rightarrow k(A)$ constructed in the third step as follows.  
For any $x \in G$, we uniquely write $x$ for $x=a_xp_x$, where $a_x \in A$ and $p_x\in P$. 
By $a_xp_xa_x^{-1}\in P$ since $P$ is a normal subgroup of $G$, 
we can define $f(x)$ by setting $h(a_xp_xa_x^{-1})k(a_x)$. The bijectivity of $f$ is trivial since $h$ and $k$ are bijective.
Since
\[
a_x \psi(p_x^{-1} a_x^{-1})=a_x \psi(a_x^{-1})\psi(a_x p_x^{-1} a_x^{-1}) =s_{a_x}(e)\cdot \psi(a_x p_x^{-1} a_x^{-1}) \in P
\]
and the properties of $h$ and $k$, we have 
\begin{align}
    \label{eq:kantan}
    h(a_x \psi(p_x^{-1} a_x^{-1}))
    &=h(s_{a_x}(e) \psi(a_x p_x^{-1} a_x^{-1}))
    =h(s_{a_x}(e)) h(\psi(a_x p_x^{-1} a_x^{-1}))\\
    &=s'_{k(a_x)}(e')  \psi' (h(a_x p_x^{-1} a_x^{-1}))
    =k(a_x)\psi'(k(a_x)^{-1}) \psi' (h(a_x p_x^{-1} a_x^{-1})) \notag\\
    &=k(a_x) \psi'(f(x)^{-1}). \notag
\end{align}
Using \eqref{eq:kantan},
we can check that $f$ is a quandle homomorphism as follows:
For elements $x=a_x p_x$ and $y=a_y p_y$ in $G$,
\begin{align*}
f \circ s_x(y)&=f( s_{a_xp_x}(a_yp_y))=f(a_y \cdot s_{a_y^{-1}a_xp_x}(p_y))\\
&=h(a_y\cdot s_{a_y^{-1}a_xp_x}(p_y)\cdot a_y^{-1}) k(a_y) \quad (\text{since $s_{a_y^{-1}a_xp_x}(p_y)\in P$})\\
&=h(a_y\cdot a_y^{-1}a_xp_x \psi(p_x^{-1} a_x^{-1} a_y p_y)\cdot a_y^{-1}) k(a_y) \\
&=h(a_x p_x a_x^{-1} \cdot a_x \psi(p_x^{-1} a_x^{-1}) \cdot \psi(a_y p_y) a_y^{-1}) k(a_y) \\
&=h(a_x p_x a_x^{-1}) \cdot h(a_x \psi(p_x^{-1} a_x^{-1})) \cdot h(a_y \psi(p_y^{-1} a_y^{-1}))^{-1} k(a_y) \\
&=h(a_x p_x a_x^{-1}) \cdot k(a_x) \psi'(f(x)^{-1}) \cdot \psi'(f(y)) k(a_y)^{-1} \cdot k(a_y)
\quad(\text{by \eqref{eq:kantan}})\\
&=f(x) \psi'(f(x)^{-1}f(y))\\
&=s'_{f(x)} (f(y)),
\end{align*}
as desired.

\bigskip

\section{Application of Theorem~\ref{monosugoi} to dihedral groups}\label{sec:D_n}

In this section, we discuss Problem~\ref{toi} for dihedral groups. 

For this, we first collect some fundamental materials on dihedral groups and their automorphisms. 
Let $D_n$ be the dihedral group of order $2n$. We use the notation $\sigma$ and $\tau$ as follows: 
$$D_n=\{ \sigma^i : i=0,1,\ldots,n-1 \} \cup \{\tau \sigma^i : i=0,1,\ldots,n-1 \}$$
equipped with the relations $\tau^2=\sigma^n=e$ and $\sigma\tau=\tau\sigma^{-1}$. 

\bigskip

We recall the structure of $\gauto(D_n)$. 
Let $$\mathrm{Aff}(C_n)=\{\varphi_{a,b} : a \in C_n^\times, b \in C_n\},$$ 
where for $\tau^\epsilon \sigma^i \in D_n$ with $\epsilon \in \{0,1\}$ and $0 \leq i \leq n-1$, 
we define a group homomorphism $\varphi_{a,b}:D_n \rightarrow D_n$ by setting 
$$\varphi_{a,b}(\tau^\epsilon \sigma^i)=\tau^\epsilon \sigma^{ai+\epsilon b}.$$ 
Then it is well known that $$\gauto(D_n) \cong_\mathrm{gr} \mathrm{Aff}(C_n).$$ 
Note that $\varphi_{c,d}^{-1}=\varphi_{c^{-1},-c^{-1}d}$ and 
\begin{align}\label{eq:conj_D_n}
\varphi_{c,d}\circ \varphi_{a,b} \circ \varphi_{c,d}^{-1}=\varphi_{a,b c+(1-a)d} 
\end{align}
hold. 

In what follows, for $a \in C_n^\times$ and $b \in C_n$, we use the notation $Q(D_n,a,b)$ instead of $Q(D_n,\varphi_{a,b})$.


\subsection{Conjugacy classes for $\gauto(D_n)$}
We discuss the conjugacy classes of $\gauto(D_n)$. 

\begin{lem}\label{hodai_D_n}
Let $c,d$ are integers and $m,n$ are positive integers.
\begin{itemize}
    \item[\em (i)] A linear congruence $c z \equiv d \pmod{n}$ with the variable $z$ has a solution
    if and only if $d$ is a multiple of $\gcd(n,c)$.
    Moreover if this linear congruence has a solution $z_0$,
    then the solutions $z$ are of the form $z\equiv z_0 + \frac{n}{\gcd(n,c)}i$ $(i=0,1,\ldots ,\gcd(n,c)-1)$.
    \item[\em (ii)] There exists $p$ with $\gcd(n,p)=1$ such that $pc \equiv \gcd(m,c) \pmod m$.
    \item[\em (iii)] We have $$\{cb : b \in C_n\}=\{gy : y \in C_n\},$$ 
    where $g=\gcd(n,c)$. 
\end{itemize}
\end{lem}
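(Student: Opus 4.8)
The plan is to prove the three parts separately, since each is an elementary fact about linear congruences and cyclic groups, and only part~(ii) requires a genuine idea.

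\emph{Part (i).} This is classical linear congruence theory. Set $g=\gcd(n,c)$. The congruence $cz\equiv d\pmod n$ is equivalent to the Diophantine equation $cz+nw=d$, which by B\'ezout is solvable if and only if $g\mid d$; this settles the solvability claim. For the structure of the solution set, once $z_0$ is one solution, an integer $z$ is a solution if and only if $c(z-z_0)\equiv 0\pmod n$, equivalently $\tfrac{c}{g}(z-z_0)\equiv 0\pmod{n/g}$, and since $\gcd(\tfrac{c}{g},\tfrac{n}{g})=1$ this holds if and only if $\tfrac{n}{g}\mid(z-z_0)$. Hence modulo $n$ the solutions are exactly $z_0+\tfrac{n}{g}i$ for $i=0,1,\dots,g-1$.

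\emph{Part (iii).} I would observe that both sides are the cyclic subgroup of $(C_n,+)$ generated by $g=\gcd(n,c)$: clearly $\{gy:y\in C_n\}=\langle g\rangle$ and $\{cb:b\in C_n\}=\langle c\rangle$, and $\langle c\rangle=\langle g\rangle$ in $\ZZ/n\ZZ$, which follows from part~(i) (the subgroup $\langle c\rangle$ has order $n/g$ and is contained in $\langle g\rangle$, which also has order $n/g$).

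\emph{Part (ii).} This is the main obstacle. Put $d=\gcd(m,c)$. By part~(i) applied with modulus $m$, the congruence $cz\equiv d\pmod m$ has a solution $z_0$, and its solution set modulo $m$ is precisely the residue class of $z_0$ modulo $m/d$. Dividing the relation $cz_0\equiv d\pmod m$ by $d$ gives $\tfrac{c}{d}z_0\equiv 1\pmod{m/d}$, so $\gcd(z_0,m/d)=1$. It therefore suffices to produce $p\equiv z_0\pmod{m/d}$ with $\gcd(p,n)=1$: let $n_1$ be the product of the primes dividing $n$ but not $m/d$, and use the Chinese Remainder Theorem to choose $p$ with $p\equiv z_0\pmod{m/d}$ and $p\equiv 1\pmod{n_1}$. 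Then any prime $q\mid n$ either divides $m/d$, whence $q\nmid z_0\equiv p$, or divides $n_1$, whence $p\equiv 1\pmod q$; in both cases $q\nmid p$, so $\gcd(p,n)=1$, and $p$ is a solution of $cz\equiv d\pmod m$ as required. The only subtlety is the observation $\gcd(z_0,m/d)=1$ together with this CRT construction; the remainder is routine.
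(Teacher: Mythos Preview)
Your proof is correct. Parts (i) and (iii) match the paper's treatment closely (the paper simply cites a reference for (i) and gives the same containment-and-order argument for (iii)).

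For part (ii) your approach differs from the paper's in a meaningful way. The paper proceeds exactly as you do up to the observation that $\gcd(z_0,m/d)=1$ (in their notation, $\gcd(y,m')=1$), but then invokes Dirichlet's theorem on primes in arithmetic progressions: the progression $z_0+\tfrac{m}{d}\ZZ$ contains infinitely many primes, so one may pick a prime $p$ in it that avoids the finitely many prime divisors of $n$, giving $\gcd(p,n)=1$. Your argument replaces this with a direct Chinese Remainder construction, splitting the prime divisors of $n$ according to whether they divide $m/d$ and forcing $p\equiv z_0\pmod{m/d}$, $p\equiv 1\pmod{n_1}$. This is strictly more elementary---Dirichlet's theorem is a substantial analytic result, whereas your route stays entirely within elementary number theory---and it produces an explicit $p$ rather than an existence statement. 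The paper's approach is shorter to write once Dirichlet is granted, but yours is self-contained.
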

\begin{proof}
(i)
This is well known (see, e.g.,~\cite[Theorem~2.2]{Nathanson}).

\noindent
(ii) 
Let $g=\gcd(m,c)$. Then there are $x,y \in \ZZ$ such that $mx+cy=g$. Hence, $cy \equiv g \pmod m$. 
Let $m'=m/g$ and $c'=c/g$. Then $m'x+c'y=1$. Note that $\gcd(m',y)=1$. 
Thus, by Dirichlet's theorem on arithmetic progressions, there are infinitely many primes $p$ which is of the form $p=m'k+y$ ($k \in \ZZ$). 
For each $p$, we have $$pc=(m'k+y)c'g=mka'+cy \equiv g \pmod m.$$ 
Since there are infinitely many such primes $p$, we can choose a prime $p$ with $\gcd(n,p)=1$, as required. 

\noindent
(iii) One inclusion is trivial since $g$ divides $c$. Thus it is enough to show that $g$ is contained in $\{cb : b \in C_n\}$. 
By (i), there is $b \in C_n$ such that $cb \equiv g \pmod n$. 
Hence, one has $g \in \{cb : b \in C_n\}$, as required. 
\end{proof}

\begin{prop}\label{prop:conj_class_D_n}
Let $\varphi_{a,b},\varphi_{a',b'}\in \gauto(D_n)$
and put $d=\gcd(n,1-a,b)$ and $d'=\gcd(n,1-a',b')$.
Then $\varphi_{a,b}$ and $\varphi_{a',b'}$ are conjugate if and only if the following two conditions are satisfied:
\begin{itemize}
    \item $a\equiv a' \pmod{n}$;
    \item $d=d'$,
\end{itemize}
i.e., the set of representatives of conjugacy classes of $\gauto(D_n)$ is given by 
\begin{equation}\label{eq:set_D_n}\begin{split}
\{\varphi_{a,d} : a \in C_n^\times, \; d \text{ is a divisor of } g\}, 
\end{split}\end{equation}
where $g=\gcd(n,1-a)$. 
\end{prop}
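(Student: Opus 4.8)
The plan is to work directly with the conjugation formula \eqref{eq:conj_D_n}, namely $\varphi_{c,d}\circ\varphi_{a,b}\circ\varphi_{c,d}^{-1}=\varphi_{a,bc+(1-a)d}$, where $c\in C_n^\times$ and $d\in C_n$ are arbitrary. The first observation is immediate: conjugation never changes the first index, so $\varphi_{a,b}\sim\varphi_{a',b'}$ forces $a\equiv a'\pmod n$. Hence from now on I fix $a=a'$ and must decide, for two pairs $\varphi_{a,b}$ and $\varphi_{a,b'}$, when $b'$ lies in the orbit $\{bc+(1-a)d : c\in C_n^\times,\ d\in C_n\}$ of $b$. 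The claim to prove is that this happens exactly when $\gcd(n,1-a,b)=\gcd(n,1-a,b')$.

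For the forward direction, suppose $b'=bc+(1-a)d$ for some unit $c$ and some $d$. Since $c$ is a unit, $\gcd(n,bc)=\gcd(n,b)$, and more to the point I want to track the common gcd with $1-a$ as well. Set $e:=\gcd(n,1-a)$; then $d:=\gcd(n,1-a,b)=\gcd(e,b)$ and similarly $d'=\gcd(e,b')$. Because $e\mid(1-a)d$, reducing $b'=bc+(1-a)d$ modulo $\gcd(e,b)$ shows $\gcd(e,b)\mid b'$, hence $\gcd(e,b)\mid\gcd(e,b')$; the symmetric argument (solving for $b$, using that $c$ is invertible mod $n$ hence mod $e$) gives the reverse divisibility, so $d=d'$. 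For the converse — the substantive direction — I assume $d=d'$ and must produce a unit $c$ and an element $d$ (the letter clash is unfortunate; I will rename the conjugating shift to $w$) with $b'=bc+(1-a)w\pmod n$. Writing $d=\gcd(n,1-a,b)$, both $b$ and $b'$ are divisible by $d$ but also by nothing finer relative to $1-a$; the idea is to first use Lemma~\ref{hodai_D_n}(ii) to find a unit $c$ with $bc\equiv d\pmod{n/(\text{something})}$, reducing to the case $b=b'=d$, and then to realize $b'-bc$ as a value of $(1-a)w$ using Lemma~\ref{hodai_D_n}(i),(iii): the set $\{(1-a)w : w\in C_n\}$ equals $\{ey : y\in C_n\}$ where $e=\gcd(n,1-a)$, so I need $b'-bc$ to be a multiple of $e$, which is where the hypothesis $d=d'$ and the careful choice of $c$ must be combined.

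The main obstacle is precisely this converse: arranging $c$ a \emph{unit mod $n$} (not merely mod some divisor) while simultaneously controlling $bc\pmod n$ well enough that the residual $b'-bc$ falls into the subgroup $e C_n$. This is exactly the scenario Lemma~\ref{hodai_D_n}(ii) was designed for — it yields a unit $c$ with $cb\equiv\gcd(n/e',b)\cdot(\text{unit})$-type control modulo the relevant modulus while keeping $\gcd(n,c)=1$ via Dirichlet — so the proof should be a matter of applying part (ii) with the modulus $n/e$ (or $n/\gcd(n,1-a)$) to replace $b$ by its gcd, applying it again to $b'$, and then matching the two via a multiple of $e$ using parts (i) and (iii). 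Once both directions are in hand, the description \eqref{eq:set_D_n} of the representatives follows formally: each conjugacy class with first index $a$ is pinned down by the single divisor $d$ of $g=\gcd(n,1-a)$, and $\varphi_{a,d}$ is a representative since $\gcd(n,1-a,d)=d$ as $d\mid g$.
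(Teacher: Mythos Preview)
Your plan is essentially the paper's own proof: use the conjugation formula to see that the first index is fixed and that the orbit of $b$ is $\{bc+(1-a)w : c\in C_n^\times,\ w\in C_n\}$; show the invariant $\gcd(g,b)$ is preserved (your divisibility argument is exactly the paper's one-line computation $\gcd(g,bc+(1-a)w)=\gcd(g,bc)=\gcd(g,b)$); and for the converse invoke Lemma~\ref{hodai_D_n}(ii) twice and Lemma~\ref{hodai_D_n}(i) once. One correction to your plan: the modulus in the application of Lemma~\ref{hodai_D_n}(ii) should be $m=g=\gcd(n,1-a)$, not $n/g$ --- you want units $p,q\in C_n^\times$ with $pb\equiv d$ and $qb'\equiv d\pmod g$, so that $b'\equiv q^{-1}pb+(\text{multiple of }g)\pmod n$, and then Lemma~\ref{hodai_D_n}(i) writes that multiple of $g$ as $(1-a)w$.
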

\begin{proof}
Let $g=\gcd(n,1-a)$ and $g'=\gcd(n,1-a')$. Note that $d=\gcd(g,b)$ 
since $\gcd(\alpha,\beta,\gamma)=\gcd(\gcd(\alpha,\beta),\gamma)$ holds for each $\alpha,\beta,\gamma \in \ZZ$.

Assume that $\varphi_{a,b}$ and $\varphi_{a',b'}$ are conjugate.
By \eqref{eq:conj_D_n}, there exist $r\in C_n^\times$ and $s\in C_n$
such that $\varphi_{a',b'}=\varphi_{a,b r+(1-a)s}$.
Then $a\equiv a' \pmod{n}$ trivially holds.
Moreover we have
\[
    d'=\gcd(g',b')
    =\gcd(g,b r+(1-a)s)
    =\gcd(g,b r)
    =\gcd(g,b)=d
\]
since $(1-a)s$ is a multiple of $g$ and $r\in C_n^\times$.

Assume that $a\equiv a' \pmod{n}$ and $d=d'$ hold.
In order to show that $\varphi_{a,b}$ and $\varphi_{a',b'}$ are conjugate,
we find $r\in C_n^\times$ and $s\in C_n$ such that $b'\equiv b r+(1-a)s \pmod{n}$.
By Lemma~\ref{hodai_D_n}~(ii), there exist $p,q\in C_n^\times$ such that $p b \equiv d,\  q b' \equiv d \pmod{g}$.
Hence there exist $k,k'\in \ZZ$ such that $d\equiv p b + k g \pmod{n}$ and $b'\equiv q^{-1} d + k' g \pmod{n}$.
On the other hand, by Lemma~\ref{hodai_D_n}~(i), there exists $z_0\in C_n$ such that $(1-a) z_0 \equiv g \pmod{n}$.
Therefore we have
\[
b'\equiv q^{-1} d + k' g
\equiv q^{-1} (p b + k g) + k' g
\equiv q^{-1} p b + (q^{-1} k+k')z_0 (1-a) \pmod{n}.
\]
Then $q^{-1} p \in C_n^\times$ and $(q^{-1} k+k')z_0 \in C_n$ are the desired $r$ and $s$, respectively.
\end{proof}

Note that $\varphi_{a,0}$ is conjugate to $\varphi_{a,g}$ in $\gauto(D_n)$, where $a \in C_n^\times$ and $g=\gcd(n,1-a)$. 
In what follows, we will use $\varphi_{a,g}$ as a representative of conjugacy classes of $\gauto(D_n)$ instead of $\varphi_{a,0}$, 
but we will use $\varphi_{a,0}$ for the computation.

\begin{ex}\label{ex:cong}
A set of representatives of conjugacy classes of each of $\gauto(D_4)$, $\gauto(D_5)$, $\gauto(D_6)$ and $\gauto(D_8)$ is given as follows: 
\begin{align*}
\gauto(D_4): \;\; &\{\varphi_{1,0},\varphi_{1,1},\varphi_{1,2},\varphi_{3,1},\varphi_{3,2} \} \\
\gauto(D_5): \;\; &\{\varphi_{1,0},\varphi_{1,1},\varphi_{2,1},\varphi_{3,1},\varphi_{4,1} \} \\
\gauto(D_6): \;\; &\{\varphi_{1,i} : i=0,1,2,3 \} \cup \{\varphi_{5,i} : i=1,2 \} \\
\gauto(D_8): \;\; &\{\varphi_{1,i} : i=0,1,2,4 \} \cup \{\varphi_{3,i},\varphi_{7,i} : i=1,2 \} \cup \{\varphi_{5,i} : i=1,2,4\} \\
\end{align*}
\end{ex}


\subsection{Invariants for $Q(D_n,a,b)$}
\begin{prop}\label{prop:D_n_Fix}
Let $a \in C_n^\times$, $b \in C_n$, $g=\gcd(n,1-a)$ and $d=\gcd(g,b)$. Then 
\begin{align*}
\fix(\varphi_{a,b},D_n)=\begin{cases}
\langle \sigma^{n/g}, \; \tau \sigma^{z_0} \rangle\ \text{for some $z_0$}, \quad &\text{if }d =g, \\
\langle \sigma^{n/g} \rangle, \quad &\text{if }d \neq g.  
\end{cases}
\end{align*}
In particular, $|\fix(\varphi_{a,b},D_n)|=\begin{cases}2g, &\text{if }d =g, \\ g, &\text{if }d \neq g. \end{cases}$
\end{prop}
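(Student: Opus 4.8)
The plan is to compute directly which elements $\tau^\epsilon\sigma^i$ of $D_n$ are fixed by $\varphi_{a,b}$, by splitting into the two cases $\epsilon=0$ and $\epsilon=1$. For $\epsilon=0$, the condition $\varphi_{a,b}(\sigma^i)=\sigma^i$ reads $ai\equiv i\pmod n$, i.e. $(1-a)i\equiv 0\pmod n$; by Lemma~\ref{hodai_D_n}~(i) (or just the elementary theory of linear congruences) the set of such $i$ is exactly the multiples of $n/g$ where $g=\gcd(n,1-a)$, so the rotation part of $\fix(\varphi_{a,b},D_n)$ is always $\langle\sigma^{n/g}\rangle$, a cyclic group of order $g$. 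For $\epsilon=1$, the condition $\varphi_{a,b}(\tau\sigma^i)=\tau\sigma^i$ reads $ai+b\equiv i\pmod n$, i.e. $(1-a)i\equiv b\pmod n$; again by Lemma~\ref{hodai_D_n}~(i) this is solvable if and only if $\gcd(n,1-a)=g$ divides $b$, which (since $d=\gcd(g,b)$) is precisely the condition $d=g$, and in that case the solution set is a coset $z_0+\tfrac{n}{g}\ZZ$ for some particular $z_0$.

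From there I would assemble the answer. If $d\neq g$, there is no reflection in $\fix(\varphi_{a,b},D_n)$, so the fixed-point set equals $\langle\sigma^{n/g}\rangle$, of order $g$. If $d=g$, pick any solution $z_0$ of $(1-a)i\equiv b\pmod n$; then $\tau\sigma^{z_0}\in\fix(\varphi_{a,b},D_n)$, and together with $\sigma^{n/g}$ it generates $\langle\sigma^{n/g},\tau\sigma^{z_0}\rangle$. I would check this subgroup is exactly the fixed-point set: it obviously lies inside it, and conversely any fixed reflection $\tau\sigma^j$ has $j\equiv z_0\pmod{n/g}$ so $\tau\sigma^j=\sigma^{j-z_0}\cdot\tau\sigma^{z_0}\in\langle\sigma^{n/g},\tau\sigma^{z_0}\rangle$; similarly for fixed rotations. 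This group has order $2g$ (it is itself a dihedral group $D_g$, or $C_2\times C_2$ when $g\le 2$), giving $|\fix(\varphi_{a,b},D_n)|=2g$.

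The only genuinely substantive point is the solvability criterion for the reflection equation $(1-a)i\equiv b\pmod n$, and translating ``$g\mid b$'' into ``$d=g$''; but this is immediate from $d=\gcd(g,b)$, since $\gcd(g,b)=g$ exactly when $g\mid b$. Everything else is bookkeeping about cosets in $D_n$. So I do not expect a real obstacle here; the main care needed is simply to state $z_0$ as ``some solution'' rather than a canonical value (the subgroup $\langle\sigma^{n/g},\tau\sigma^{z_0}\rangle$ does not depend on which solution $z_0$ is chosen, since two solutions differ by a multiple of $n/g$). Finally, the ``in particular'' statement about $|\fix(\varphi_{a,b},D_n)|$ follows by just counting: $|\langle\sigma^{n/g}\rangle|=g$ and adjoining one reflection coset doubles the order.
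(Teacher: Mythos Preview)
Your proposal is correct and follows essentially the same argument as the paper: split elements as $\tau^\epsilon\sigma^i$, solve the congruences $(1-a)i\equiv 0$ and $(1-a)i\equiv b \pmod n$ via Lemma~\ref{hodai_D_n}~(i), and translate $g\mid b$ into $d=g$. One small slip: in $D_n$ with the relation $\sigma\tau=\tau\sigma^{-1}$, you should write $\tau\sigma^j=\tau\sigma^{z_0}\cdot\sigma^{j-z_0}$ rather than $\sigma^{j-z_0}\cdot\tau\sigma^{z_0}$, but this does not affect the conclusion.
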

\begin{proof}
By $\varphi_{a,b}(\sigma^i)=\sigma^{ai}$, we see that $\varphi_{a,b}(\sigma^i)=\sigma^i$ if and only if $(1-a)i \equiv 0$ (mod $n$). 
By Lemma~\ref{hodai_D_n}~(i), $i$ is a multiple of $n/g$.

Moreover, by $\varphi_{a,b}(\tau\sigma^i)=\tau\sigma^{ai+b}$, 
we also see that $\varphi_{a,b}(\tau\sigma^i)=\tau\sigma^i$ if and only if $(1-a)i \equiv b \pmod{n}$.
By Lemma~\ref{hodai_D_n}~(i), this has a solution if and only if $b$ is a multiple of $g$, i.e., $d=g$.
Moreover, when $d=g$, the solutions are $i\equiv z_0 +\frac{n}{g}j \pmod{n}$ ($j=0,1,\ldots g-1$).
\end{proof}

For $x, y \in Q(D_n,a,b)$, let $x=\tau^\epsilon \sigma^i$ and $y=\tau^\delta \sigma^j$. 
Then we see the following: 
$$
s_x(y)=\tau^\delta \sigma^{(-1)^{\epsilon+\delta}(1-a)i+aj+\mu(\epsilon+\delta)b},
$$
where
\begin{equation}
\label{eq:mu-def}
\mu(i)=
\begin{cases}
1 & \text{if $i$ is odd},\\
0 & \text{if $i$ is even}\\
\end{cases}         
\end{equation}
for $i\in C_{2}$.
In particular, we have 
\begin{equation}\label{eq:sxe}
     s_x(e)=\sigma^{(-1)^\epsilon(1 - a) i+\epsilon b}.
\end{equation}

\begin{prop}\label{prop:p}
Let $a \in C_n^\times$, $b \in C_n$, $g=\gcd(n,1-a)$, $d=\gcd(g,b)$ and $g_2=\gcd(n/d,1-a)$. 
Then $$P(Q(D_n,a,b))=\langle \sigma^{d} \rangle \;\;\text{and}\;\; P^2(Q(D_n,a,b))=\langle \sigma^{dg_2} \rangle.$$ 
\end{prop}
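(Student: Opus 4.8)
The plan is to compute $P(Q(D_n,a,b))$ directly from its definition as the subgroup of $D_n$ generated by all elements of the form $s_{a_1}\circ\cdots\circ s_{a_r}(e)$, using the explicit formula \eqref{eq:sxe} for $s_x(e)$ and the description of $P$ as a normal subgroup from Proposition~\ref{normal_subgroup}. First I would observe that $P$ is a subgroup of the rotation subgroup $\langle\sigma\rangle\cong C_n$: indeed, writing $x=\tau^\epsilon\sigma^i$, formula \eqref{eq:sxe} gives $s_x(e)=\sigma^{(-1)^\epsilon(1-a)i+\epsilon b}$, which always lies in $\langle\sigma\rangle$, and since compositions $s_{a_1}\circ\cdots\circ s_{a_r}(e)$ are obtained by iterating the maps $s_x$ (each of which, by the general formula for $s_x(y)$, sends $\langle\sigma\rangle$ to $\langle\sigma\rangle$), the whole orbit $P$ sits inside $\langle\sigma\rangle$. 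Thus $P=\langle\sigma^e\rangle$ for $e=\gcd(n,\,\{\text{exponents appearing}\})$, and it remains to identify the relevant gcd.

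Next I would pin down $P$ itself. Taking $\epsilon=0$ in \eqref{eq:sxe} gives all elements $\sigma^{(1-a)i}$, $i\in C_n$, so $\langle\sigma^{\gcd(n,1-a)}\rangle=\langle\sigma^g\rangle\subseteq P$ by Lemma~\ref{hodai_D_n}(iii). Taking $\epsilon=1$ and $i=0$ gives $\sigma^b\in P$. Hence $\langle\sigma^{\gcd(g,b)}\rangle=\langle\sigma^d\rangle\subseteq P$. For the reverse inclusion I would argue that every generator $s_x(e)=\sigma^{(-1)^\epsilon(1-a)i+\epsilon b}$ has exponent lying in $g\ZZ+b\ZZ=d\ZZ$ (modulo $n$), and that applying further point symmetries preserves membership in $\langle\sigma^d\rangle$; more cleanly, since $P$ is a subgroup of $\langle\sigma\rangle$ containing $\sigma^d$, it suffices to check that $P\subseteq\langle\sigma^d\rangle$, for which one verifies that $\langle\sigma^d\rangle$ is closed under all the maps $s_x$ — using $s_x(\sigma^{dk})=\tau^0\sigma^{(-1)^\epsilon(1-a)i+a\cdot dk+\mu(\epsilon)b}$ and noting $(1-a)i$ and $\epsilon b$ are multiples of $d$ while $a\cdot dk$ is obviously one — so that the orbit of $e$ under $\qinn$ stays inside $\langle\sigma^d\rangle$. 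This gives $P=\langle\sigma^d\rangle$.

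For $P^2=P(P(Q))=P(Q(D_n\text{-subquandle }P,\ \psi|_P))$, I would apply the first part with the roles of the ambient group and automorphism played by $P=\langle\sigma^d\rangle\cong C_{n/d}$ and the restriction $\psi|_P=\varphi_{a,b}|_{\langle\sigma^d\rangle}$, which acts on $\sigma^d$ by $\sigma^d\mapsto\sigma^{ad}$, i.e. as multiplication by $a$ on $C_{n/d}$. By Remark~\ref{chuui_P}(i)/(ii) and the abelian case, $P(Q(C_{n/d},a))=\langle\sigma^{d\cdot\gcd(n/d,\,1-a)}\rangle=\langle\sigma^{dg_2}\rangle$; more directly, $P^2=\langle\,s_p(e): p\in P\,\rangle$-type generators are $\sigma^{(1-a)dk}$ for $k\in C_{n/d}$, whose gcd of exponents is $d\cdot\gcd(n/d,1-a)=dg_2$. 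I expect the main obstacle to be the reverse inclusion $P\subseteq\langle\sigma^d\rangle$: one must be careful that iterated compositions of the $s_x$ (which involve the $\mu(\epsilon+\delta)b$ term and sign flips $(-1)^{\epsilon+\delta}$ when reflections are involved) do not produce exponents outside $d\ZZ$, so the cleanest route is to show $\langle\sigma^d\rangle$ is a subquandle stable under every $s_x$ with $x\in D_n$ and contains $e$, hence contains the whole orbit $P$; the analogous stability check for $P^2$ inside $Q(P,\psi|_P)$ is then routine since there are no reflections left.
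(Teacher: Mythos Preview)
Your proposal is correct and follows essentially the same computation as the paper: use \eqref{eq:sxe} to identify the exponents of the elements $s_x(e)$, read off the gcd via Lemma~\ref{hodai_D_n}(iii), and then repeat inside the cyclic group $P\cong C_{n/d}$ for $P^2$.

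The one place where you work harder than necessary is the reverse inclusion $P\subseteq\langle\sigma^d\rangle$. You propose to show $\langle\sigma^d\rangle$ is stable under every $s_x$ so that the full $\qinn$-orbit of $e$ stays inside; this is fine, but the paper bypasses it entirely. By Remark~\ref{rem:Bonatto} (together with Proposition~\ref{normal_subgroup}), one already knows that $P$ coincides with the subgroup $\langle s_x(e):x\in G\rangle$, not merely with the orbit. Hence once every single-step element $s_x(e)=\sigma^{(-1)^\epsilon(1-a)i+\epsilon b}$ has exponent in $(1-a)\ZZ+b\ZZ\subseteq d\ZZ$, the inclusion $P\subseteq\langle\sigma^d\rangle$ is immediate, with no need to track iterated compositions or the $\mu(\epsilon+\delta)b$ terms. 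The same shortcut handles $P^2$: it is the subgroup of $P$ generated by $\{s_p(e):p\in P\}=\{\sigma^{d(1-a)j}:j\}$, so $P^2=\langle\sigma^{\gcd(n,d(1-a))}\rangle=\langle\sigma^{dg_2}\rangle$, exactly as you compute at the end.
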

\begin{proof}
It directly follows from \eqref{eq:sxe} and Lemma~\ref{hodai_D_n} (iii) that $P(Q(D_n,a,b))=\langle \sigma^{d} \rangle$. 

For $p \in P(Q(D_n,a,b))$, let $p=\sigma^{dj}$ for some $j$. Then $s_p(e)=\sigma^{d(1-a)j}$. 
Since $j$ runs over $j=0,1,\ldots,n/d-1$, we obtain that $P^2(Q(D_n,a,b))=\langle \sigma^{dg_2} \rangle$. 
\end{proof}

\subsection{Proof of Corollary~\ref{cor2}}
Now, we give a proof of Corollary~\ref{cor2}. 

\noindent
{\bf The first step}: First, we check that $Q(D_n,a,b)$ satisfies \ref{G-normal} and \ref{iikanji}.

For any $\tau^\epsilon \sigma^i \in D_n$ and any $\sigma^{dg_2j} \in P^2$, we see the following: 
$$
(\tau^\epsilon \sigma^i) \cdot \sigma^{dg_2j} \cdot (\tau^\epsilon \sigma^i)^{-1} 
=\tau^\epsilon \sigma^{i+dg_2j-i} \tau^\epsilon = \sigma^{(-1)^\epsilon dg_2j} \in P^2, 
$$
which implies that $Q(D_n,a,b)$ satisfies \ref{G-normal}. 

Fix $\sigma^{d g_2 j} \in P^2$. Since $\gcd(n,(1-a)d)=dg_2$, by Lemma~\ref{hodai_D_n}~(i),
there exists $z_0\in C_n$ such that $(1-a) d z_0 \equiv d g_2 j \pmod{n}$.
Hence, for any $\sigma^{dg_2j} \in P^2$, by letting $p=\sigma^{d z_0} \in P$, we see that 
$$s_p(e)=\sigma^{(1-a)d z_0}=\sigma^{dg_2j}.$$
Thus, $Q(D_n,a,b)$ satisfies \ref{iikanji}. 

Therefore, we can apply Theorem~\ref{monosugoi} for $G=G'=D_n$. 

\noindent
{\bf The second step}: 
Next, we prove the ``Only if'' part of the statement. 
Let $Q=Q(D_n,a,b)$ and $Q'=Q(D_n,a',b')$ and assume that $Q \cong_\mathrm{qu} Q'$. 
\begin{itemize}
\item Then $|\fix(\varphi_{a,b},D_n)|=|\fix(\varphi_{a',b'},D_n)|$ holds. 
\item By Proposition~\ref{prop:p}, we see that $P=P(Q) \cong_\mathrm{gr} C_{n/d}$ and $P'=P(Q')  \cong_\mathrm{gr} C_{n/d'}$. Thus, we obtain that $d=d'$. 
\item Let $h : C_{n/d} \rightarrow C_{n/d}$ be a group isomorphism satisfying the condition (C) in Theorem~\ref{monosugoi}. 
Note that $\varphi_{a,b}|_P$, $\varphi_{a',b'}|_{P'}$ and $h$ are multiplication maps by some $a,a',c \in C_{n/d}^\times$, respectively.
By (C-1),  we have $ca=a'c$ in $C_{n/d}$. Since $c \in C_{n/d}^\times$, we have $a=a'$ in $C_{n/d}$.
\end{itemize}

\noindent
{\bf The third step}: 
Finally, we prove the ``If'' part of the statement. Let $a,a' \in C_n^\times$ and $b,b' \in C_n$ which satisfy all the conditions in Corollary~\ref{cor2}. 
Then the conditions (A) and (B) in Theorem~\ref{monosugoi} are clearly satisfied. 
Let us consider the identity map $h=\id_{C_{n/d}}$. Then $h : P \rightarrow P'$ is a group isomorphism. 
In what follows, we verify that $h$ satisfies (C-1) and (C-2). 
\begin{itemize}
\item[(C-1)] This is equivalent to the condition $a \equiv a' \pmod{n/d}$ as discussed in the second step. 
\item[(C-2)] We divide the discussions into two cases: 
\begin{itemize}
\item ``$d=g$ and $d'=g'$'' or ``$d \neq g$ and $d' \neq g'$''; 
\item $d \neq g$ and $d' = g'$ (the case $d=g$ and $d' \neq g'$ is similar). 
\end{itemize}

For the first case, it follows from the assumption $|\fix(\varphi_{a,b},D_n)|=|\fix(\varphi_{a',b'},D_n)|$ together with Proposition~\ref{prop:D_n_Fix} that $g=g'$ holds.
By Lemma~\ref{hodai_D_n}~(i), there exists $z_0\in C_n$ such that $(1-a')z_0\equiv 1-a \pmod{n}$.
Then, for any $x=\tau^\epsilon \sigma^i$, by letting $x'=\tau^\epsilon \sigma^{z_0i}$, we see that 
\begin{align*}
h(s_x(e))&=s_x(e)=\sigma^{(-1)^\epsilon(1 - a) i+\epsilon b}
=\sigma^{(-1)^\epsilon(1 - a') z_0i+\epsilon b}=s_{x'}'(e). 
\end{align*}
See \eqref{eq:sxe}. 

For the second case, we have $g=2g'$ (see Proposition~\ref{prop:D_n_Fix}). 
By Lemma~\ref{hodai_D_n}~(i), there exists $z_0\in C_n$ such that $(1-a')z_0\equiv 1-a \pmod{n}$.
Similar to the above, we obtain that $h(s_x(e))=s_{x'}'(e)$. 
\end{itemize}

\bigskip


\section{Relationships among generalized Alexander quandles arising from $C_{2n}$ and $D_n$}\label{sec:relation}

In this section, we first discuss Problem~\ref{toi} for finite abelian groups. 
We remark that a solution of Problem~\ref{toi} for finite abelian groups was already given in \cite[Theorem 2.1]{Nelson}, 
but this paper gives another interpretation of \cite[Theorem 2.1]{Nelson} from viewpoints of group theory. 
After those discussions, we investigate a relationship between $Q(D_n,a,b)$ and $Q(C_{2n},a')$, where $a,a' \in C_n^\times$ and $b \in C_n$.

\subsection{Proof of Corollary~\ref{cor1}}
\label{sec:abel}
\begin{proof}[Proof of Corollary~\ref{cor1}]
We apply Theorem~\ref{monosugoi} to finite abelian groups. 
What we may prove is the following: 
\begin{itemize}
\item[(i)] $P=\Im \rho$; 
\item[(ii)] $Q(G,\psi)$ always satisfies \ref{G-normal} and \ref{iikanji} when $G$ is abelian; 
\item[(iii)] the condition (C-2) is automatically satisfied when $G$ is abelian; 
\item[(iv)] the condition (B) follows from (A) and (C).  
\end{itemize}

\noindent (i)
Since $G$ is abelian, we see that 
\[
P
=\{x + \psi(-x) : x \in G\}
=\{x - \psi(x) : x \in G\}
=\Im (\mathrm{id}_G - \psi)
=\Im \rho.        
\]

\noindent (ii)
\ref{G-normal} is trivial since $P^2$ is a subgroup of an abelian group $G$, and thus, $P^2$ is also abelian. 
Moreover, since $P^2=\Im \rho^2$ holds, we see that $Q(G,\psi)$ satisfies \ref{iikanji}. 

\noindent (iii)
Since $h(s_a(e)) \in \Im \rho'=P'$ for each $a \in G$, we see that there is $a' \in G'$ with $h(s_a(e))=\rho'(a')=s_{a'}(e')$. 

\noindent (iv)
We know that $\fix(\psi,G)=\{x \in G : \psi(x)=x\} = \Ker \rho$. Hence, $\Im \rho \cong_\mathrm{gr} G/\Ker \rho$. 
Since $|G|=|G'|$ by (A) and $|P|=|P'|$ by (C), we conclude that 
$$|\fix(\psi,G)|=\frac{|G|}{|\Im \rho|}=\frac{|G|}{|P|}=\frac{|G'|}{|P'|}=|\fix(\psi',G')|,$$
as required. 
\end{proof}
\begin{rem}
We see that \cite[Theorem 2.1]{Nelson} can be rephrased as Corollary~\ref{cor1}, which is explained as follows.  

Let us recall the statement of \cite[Theorem 2.1]{Nelson}: 
two finite Alexander quandles $M$ and $N$ of the same cardinality are isomorphic as quandles 
if and only if there is an isomorphism of $\Lambda$-modules $h:(1-t)M \rightarrow (1-t)N$. (Please refer \cite[Section 1]{Nelson} for undefined notions.) 
We see that $(1-t)M$ (resp. $(1-t)N$) is isomorphic to $\Im \rho$ (resp. $\Im \rho'$) as abelian groups, 
and we can interpret the existence of an isomorphism $h:(1-t)M \rightarrow (1-t)N$ of $\Lambda$-modules 
as the existence of a group isomorphism $h: \Im \rho \rightarrow \Im \rho'$ satisfying $h \circ \psi|_{\Im \rho} = \psi'|_{\Im \rho'} \circ h$. 
\end{rem}

We also give a proof of \cite[Corollary 2.2]{Nelson} by using Corollary~\ref{cor1}. 
\begin{cor}[{\cite[Corollary 2.2]{Nelson}}]\label{cor:Nelson}
Let $a,a' \in C_n^\times$ and $g=\gcd(n,1-a)$ and $g'=\gcd(n,1-a')$. 
Let $Q=Q(C_n,a)$ and let $Q'=Q(C_n,a')$. Then $Q \cong_\mathrm{qu} Q'$ if and only if $g=g'$ and $a \equiv a' \pmod{n/g}$. 
\end{cor}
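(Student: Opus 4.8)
The plan is to derive Corollary~\ref{cor:Nelson} directly from Corollary~\ref{cor1} applied with $G=G'=C_n$, $\psi$ the multiplication-by-$a$ map and $\psi'$ the multiplication-by-$a'$ map. First I would record that in the cyclic group $C_n$ the map $\rho=\id-\psi$ is multiplication by $1-a$, so by Lemma~\ref{hodai_D_n}~(iii) its image is $\Im\rho=\langle g\rangle$, the unique subgroup of $C_n$ of order $n/g$; likewise $\Im\rho'=\langle g'\rangle$ has order $n/g'$. Since $|C_n|=|C_n|$ is automatic, Corollary~\ref{cor1} says $Q\cong_\mathrm{qu}Q'$ if and only if there is a group isomorphism $h:\Im\rho\to\Im\rho'$ with $h\circ\psi|_{\Im\rho}=\psi'|_{\Im\rho'}\circ h$.

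Next I would translate this existence statement into the two numerical conditions. A group isomorphism between $\langle g\rangle$ and $\langle g'\rangle$ can exist only if these cyclic groups have the same order, i.e. $n/g=n/g'$, equivalently $g=g'$; so I may assume $g=g'$ from now on and identify both $\Im\rho$ and $\Im\rho'$ with $C_{n/g}$. Under this identification $\psi|_{\Im\rho}$ is multiplication by $a$ and $\psi'|_{\Im\rho'}$ is multiplication by $a'$, both viewed as elements of $C_{n/g}^\times$ (one should check $a$ is indeed a unit modulo $n/g$: from $\gcd(a,n)=1$ one gets $\gcd(a,n/g)=1$). Any group isomorphism $h$ of $C_{n/g}$ is itself multiplication by some unit $c\in C_{n/g}^\times$, and the intertwining relation $h\circ\psi=\psi'\circ h$ becomes $ca\equiv a'c\pmod{n/g}$; cancelling the unit $c$ this is exactly $a\equiv a'\pmod{n/g}$. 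Conversely, if $g=g'$ and $a\equiv a'\pmod{n/g}$, then $h=\id_{C_{n/g}}$ does the job, so the condition in Corollary~\ref{cor1} is met and $Q\cong_\mathrm{qu}Q'$.

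There is essentially no hard step here; the only points needing a line of care are the observation that an isomorphism of cyclic groups forces equality of orders (hence $g=g'$), the fact that the restriction of multiplication by $a$ to $\Im\rho$ is multiplication by the class of $a$ in $C_{n/g}^\times$, and the elementary remark that every automorphism of $C_{n/g}$ is multiplication by a unit so that the intertwining condition simplifies by cancellation. The main (very mild) obstacle is bookkeeping: making sure that the ``same cardinality'' hypothesis of Corollary~\ref{cor1} is vacuous here, that $a$ really is invertible modulo $n/g$, and that one is consistently viewing $\psi|_{\Im\rho}$ as an endomorphism of $C_{n/g}$ rather than of $C_n$.
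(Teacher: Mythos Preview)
Your proposal is correct and follows essentially the same approach as the paper's proof: both apply Corollary~\ref{cor1} to $G=G'=C_n$, use Lemma~\ref{hodai_D_n}~(iii) to identify $\Im\rho=\langle g\rangle\cong_\mathrm{gr} C_{n/g}$, deduce $g=g'$ from the existence of a group isomorphism, and reduce the intertwining condition $h\circ\psi|_{\Im\rho}=\psi'|_{\Im\rho'}\circ h$ to $ca\equiv a'c\pmod{n/g}$ with $c\in C_{n/g}^\times$, hence $a\equiv a'\pmod{n/g}$. Your write-up is slightly more explicit about the converse direction and about why $a$ is a unit modulo $n/g$, but the argument is the same.
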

\begin{proof}
Note that $\gauto(C_n) \cong_\mathrm{gr} C_n^\times$ as groups 
and $\Im \rho$ in the statement of Corollary~\ref{cor1} is equal to $\{g y : y \in C_n\} =\langle g \rangle \cong_\mathrm{gr} C_{n/g}$
by Lemma~\ref{hodai_D_n} (iii). 
\begin{itemize}
\item The condition on the existence of a group isomorphism $h : \Im \rho \rightarrow \Im \rho'$ is equivalent to $n/g = n/g'$, i.e., $g=g'$. 
Then $h$ corresponds to some element of $C_{n/g}^\times$. 
\item The condition $h \circ \psi|_{\Im \rho} = \psi'|_{\Im \rho'} \circ h$ is equivalent to that $ha \equiv a'h \pmod{n/g}$, i.e., $a \equiv a' \pmod{n/g}$. 
\end{itemize}
\end{proof}

\subsection{Proof of Corollary~\ref{cor3}}
\begin{proof}[Proof of Corollary~\ref{cor3}]
Let $Q=Q(C_{2n},a)$ and let $Q'=Q(D_n,\tilde{a},g)$. We have already discussed that 
$Q$ and $Q'$ satisfy \ref{G-normal} and \ref{iikanji} in Proof of Corollary~\ref{cor1} and Section~\ref{sec:D_n}, respectively. 
Thus, we can apply Theorem~\ref{monosugoi} and it is enough to show that $Q$ and $Q'$ satisfy the conditions (A),(B) and (C). 

On (A), it is trivial since $|C_{2n}|=|D_n|=2n$. 
On (B), we have $|\fix(a,C_{2n})|=|\{x \in C_{2n} : (1-a)x=0\}|=\gcd(2n,1-a)=2\gcd(n,k)=2g$. 
On the other hand, we see that $d'=\gcd(n,1-\tilde{a},g)=\gcd(n,2k,g)=g$ and $g'=\gcd(n,1-\tilde{a})=\gcd(n,-2k)$ is equal to $g$ or $2g$.
If $g'=g$, then $d'$ coincides with $g'$. Hence $|\fix(\varphi_{\tilde{a},g},D_n)|=2g'=2g$.
If $g'=2g$, then $d'$ is a proper divisor of $g'$. Hence $|\fix(\varphi_{\tilde{a},g},D_n)|=g'=2g$.
Thus, in all cases, we obtain $|\fix(\varphi_{\tilde{a},g},D_n)|=2g$.
This implies $|\fix(a,C_{2n})|=|\fix(\varphi_{\tilde{a,g}},D_n)|$.

Our remaining task is to check that (C) is satisfied. Before it, we have to compute $P=P(Q)$ and $P'=P(Q')$. 
\begin{itemize}
\item We see that $P=P(Q)=\langle 2g \rangle < C_{2n}$.  
\item By Proposition~\ref{prop:p}, we see that $P'= \langle \sigma^g \rangle$. 
\end{itemize}
Note that $P \cong_\mathrm{gr} P' \cong_\mathrm{gr} C_{n/g}$. 

We define $h : \langle 2g \rangle \rightarrow \langle \sigma^g \rangle$ as follows: 
$$h(2gj):=\sigma^{gj} \text{ for }j=0,1,\ldots,n/g-1. $$
It is trivial that $h$ is a group isomorphism. On (C-1), we see that 
\begin{align*}
h \circ a (2gj)=h(2agj)=\sigma^{agj} \;\text{ and }\; 
\varphi_{\tilde{a},g} \circ h(2gj)=\varphi_{\tilde{a},g}(\sigma^{gj})=\sigma^{\tilde{a}gj}=\sigma^{agj}. 
\end{align*}
On (C-2), for any $i \in C_{2n}$, we see that 
$$h(s_i(e))=h((1-a)i)=h\left(2g \frac{1-a}{2g} i\right)=\sigma^{\frac{1-a}{2}i}.$$
On the other hand, for $x'=\tau^\epsilon \sigma^j \in D_n$, we see from \eqref{eq:sxe} that 
$$s_{x'}'(e)=\sigma^{(-1)^\epsilon (1-\tilde{a})j+\epsilon g}=\sigma^{(-1)^\epsilon (1-a)j+\epsilon g}.$$
\begin{itemize}
\item When $g'=g$, the linear congruence $(1-a)j\equiv \frac{(1-a)i}{2} \pmod{n}$ has a solution $j=z_0$
by Lemma~\ref{hodai_D_n} (i).
By letting $\epsilon=0$ and $j=z_0$, we see that $s_{x'}'(e)=\sigma^{\frac{(1-a)i}{2}}=h(s_i(e))$.
\item When $g'=2g$, we see that $n$ must be even.
Then we can define the function $\mu$ like \eqref{eq:mu-def}.
Put $\ell =\frac{ki}{g}\in C_n$. Then $\ell + \mu(\ell)$ must be even.
In this case, the linear congruence $(-1)^{\mu(\ell)}(1-a)j\equiv -(\ell +\mu(\ell))g \pmod{n}$ has a solution $j=z_1$
by Lemma~\ref{hodai_D_n} (i).
By letting $\epsilon=\mu(\ell)$ and $j=z_1$, we see that $$s_{x'}'(e)=\sigma^{(-1)^{\mu(\ell)}(1-a)z_1+\mu(\ell)g}
=\sigma^{-(\ell +\mu(\ell))g+\mu(\ell)g}=\sigma^{-\ell g}=
\sigma^{-ki}=h(s_i(e)).$$
\end{itemize}
\end{proof}

\bigskip


\section{Generalized Alexander quandles arising from finite groups of small orders}\label{sec:group}

This section is devoted to solving Problem~\ref{toi} in the case where the order of $G$ is small 
by using Theorem~\ref{monosugoi} together with Corollaries~\ref{cor1}, \ref{cor2} and \ref{cor3}. 
More concretely, we determine $\Qc_{\GA}(n)$ for small integers $n$. 
For this purpose, we investigate the generalized Alexander quandles arising from the groups with its orders $p$ or $2p$ or $p^2$, where $p$ is a prime. 

\subsection{The case $|G|=p$}\label{sec:p}
Let $G$ be a group with $|G|=p$, where $p$ is a prime. Then it is very well known that $G \cong_\mathrm{gr} C_p$. 
Moreover, since $C_p$ is simple, it follows from Theorem~\ref{simple} that 
$\Qc_{\GA}(p)$ bijectively corresponds to $\gauto(C_p)$, which is isomorphic to $C_p^\times$ as groups. In particular, $|\Qc_{\GA}(p)|=p-1$.

\subsection{The case $|G|=2p$}\label{sec:2p}
Let $G$ be a group with $|G|=2p$, where $p$ is an odd prime. (Note that the case $p=2$ will be discussed in Subsection~\ref{sec:p^2}.) 
Then it is well known that $G \cong_\mathrm{gr} C_{2p}$ or $D_p$ (see, e.g., \cite[Theorem 4.19]{Rotman}). 
Thanks to Corollary~\ref{cor3}, it suffices to discuss only $Q(D_p, \psi)$, where $\psi \in \gauto(D_p)$. 
Hence, $\Qc_{\GA}(2p)=\Qc(D_p)$. 
On the other hand, we know by Proposition~\ref{prop:conj_class_D_n} that the conjugacy classes for $\gauto(D_p)$ bijectively correspond to 
$\{\varphi_{a,d} : a \in C_p^\times, \text{$d$ is a divisor of $g$}\}$, where $g=\gcd(p,1-a)$. Note that $C_p^\times=\{1,2,\ldots,p-1\}$. 
\begin{itemize}
\item When $a=1$, we have $g=p$, so the possible $d$'s are $1$ and $p=g$. 
\item When $a \neq 1$, we have $g=1$, so the possible $d$ is only $1=g$. 
\end{itemize}
Therefore, $\gauto(D_p)$ bijectively corresponds to $\{(a,0) : a \in C_p^\times\} \cup \{(1,1)\} \subset C_p^\times \times C_p$. 
Those give all different generalized Alexander quandles by (the third condition of) Corollary~\ref{cor2}. 
In particular, $|\Qc_{\GA}(2p)|=p$.

\subsection{The case $|G|=p^2$}\label{sec:p^2}
Let $G$ be a group with $|G|=p^2$, where $p$ is a prime. 
Then it is well known that $G \cong_\mathrm{gr} C_{p^2}$ or $G \cong_\mathrm{gr} C_p \times C_p$ (see, e.g., \cite[Corollary 4.5 and Exercise 2.68]{Rotman}). 

First, let us consider $\Qc(C_{p^2})$ and take $Q \in \Qc(C_{p^2})$. 
Since $P=P(Q)$ is a subgroup of $C_{p^2}$, we see that $P$ is isomorphic to $\{0\}$ or $C_p$ or $C_{p^2}$ as groups. 
Here, we note that $s_e$ is nothing but a group automorphism of $C_{p^2}$, which corresponds to an element of $C_{p^2}^\times$. 
Let $a \in C_{p^2}^\times$ correspond to $s_e$. Here, we see from the description $s_x(e)=(1-a)x$ and Lemma~\ref{hodai_D_n} (iii) that 
$P=\{gy : y \in C_{p^2}\} \cong_\mathrm{gr} C_{p^2/g}$, where $g=\gcd(1-a,p^2)$. 
\begin{itemize}
\item Note that $P=\{0\}$ if and only if $Q$ is a trivial quandle (see Remark~\ref{chuui_P}). 
\item Let $P \cong_\mathrm{gr} C_p$. Then $g=p$. Thus, $a=kp+1$ for some $1 \leq k \leq p-1$. 
For $a=kp+1$ and $a' = k'p+1$ with $1 \leq k,k' \leq p-1$, let $g=\gcd(1-a,n=p^2)$ and $g'=\gcd(1-a',n)$, respectively. 
Then we see that $g=g'=p$ and $a \equiv a' \pmod{n/g(=p)}$. 
Thus, $Q(C_{p^2},a) \cong_\mathrm{qu} Q(C_{p^2},a')$ for any $a =kp+1$ and $a' =k'p+1$ by Corollary~\ref{cor:Nelson}. 
\item Let $P \cong_\mathrm{gr} C_{p^2}$. Then $g=1$. Thus, each $a \in C_{p^2}$ with $g=1$ defines different quandles. 
\end{itemize}
Therefore, in this case, $\Qc(C_{p^2})$ bijectively corresponds to $$\{1\} \cup \{p+1\} \cup \{kp+r : 0 \leq k \leq p-1, 2 \leq r \leq p-1\} \subset C_{p^2}^\times.$$

Next, let us consider $\Qc(C_p \times C_p)$. 
Let $G=C_p \times C_p$ and regard $G$ as an additive group of a vector space $C_p^2$ over a prime field $C_p$. 
Then it is known that $\gauto(G) \cong_\mathrm{gr} \GL(2,p)$ and 
a set of representatives of conjugacy classes for $\gauto(G)$ is determined by rational canonical forms as follows: 
\begin{align}\label{eq:conj_pxp}
\{\lambda I : \lambda \in C_p^\times \} \cup 
\left\{ \begin{pmatrix} 0 & a\\ 1 & b \end{pmatrix} :  a\in C_p^\times, \;  b \in C_p\right\}, 
\end{align}
where $I=\begin{pmatrix} 1 &0 \\ 0 &1 \end{pmatrix}$
(cf. \cite[Chapter~6]{Herstein}, \cite{Feit,Macdonald}). Note that $I \in \gauto(G)$ corresponds to a trivial quandle. 
\begin{itemize}
\item For $\lambda \in C_p^\times$ with $\lambda \neq 1$, since $1 - \lambda \in C_p^\times$, we see that $\Im (1 - \lambda) I = C_p \times C_p$. 
Hence, by Corollary~\ref{cor1}, we obtain that each $\lambda \in C_p^\times$ defines different quandles. 
\item For $M=\begin{pmatrix} 0 & a\\ 1 & b \end{pmatrix}$ with $a\in C_p^\times$ and $b \in C_p$, let us consider $N:=I-M$. Then 
\begin{align*}
N=\begin{pmatrix} 1 &-a \\ -1 &1-b \end{pmatrix} \rightarrow \begin{pmatrix} 1 &-a \\ 0 &1-a-b \end{pmatrix}, \text{ i.e., }
\rank N=\begin{cases} 1 &\text{if }b=1-a, \\
2 &\text{if }b \neq 1-a. 
\end{cases} 
\end{align*}
If $b \neq 1-a$, then $N$ is regular, so $\Im \rho=\Im N = C_p \times C_p$. Hence, each $M$ defines different quandles. 
If $b= 1-a$, then $\Im \rho$ is the image of $\begin{pmatrix}1 \\ -1 \end{pmatrix}$, i.e., $P \cong_\mathrm{gr} C_p$. 
Moreover, since $M \begin{pmatrix}1 \\ -1 \end{pmatrix}=-a \begin{pmatrix}1 \\ -1 \end{pmatrix}$, we see that $Q(P,M) \cong_\mathrm{qu} Q(C_p,-a)$. 
Hence, each $a \in C_p^\times$ defines different quandles. 
\end{itemize}
Therefore, in this case, $\Qc(C_p \times C_p)$ bijectively corresponds to \eqref{eq:conj_pxp} itself.

Furthermore, from the above discussions, we see the following: 
\begin{itemize}
\item $Q(C_{p^2},1) \cong_\mathrm{qu} Q(C_p \times C_p, I)$ holds; 
\item $Q(C_{p^2},p+1) \cong_\mathrm{qu} Q(C_p \times C_p,M)$ holds, where $M=\begin{pmatrix} 0 & -1\\ 1 & 2 \end{pmatrix}$; 
\item $Q(C_{p^2},a) \cong_\mathrm{qu} Q(C_p \times C_p, M)$ does not hold for any $a \in C_{p^2}^\times$ and $M$ in \eqref{eq:conj_pxp} for other cases. 
In fact, we have $P(Q(C_{p^2},a))=C_{p^2}$, while $C_{p^2}$ cannot be a normal subgroup of $C_p \times C_p$ (cf. Proposition~\ref{normal_subgroup} (i)). 
\end{itemize}
In particular, $$|\Qc_{\GA}(p^2)|=|\Qc(C_{p^2})|+|\Qc(C_p \times C_p)|-(1+1)=2p^2-2p-1.$$

\subsection{$n=8$}\label{sec:8}
Let $G$ be a group with $|G|=8$. Then $G$ is isomorphic to 
$C_8$ or $C_4 \times C_2$ or $C_2 \times C_2 \times C_2$ or $D_4$ or the quaternion group $Q_8$ (see, e.g., \cite[p. 250]{Roman}). 
By Corollary~\ref{cor3}, we do not need to discuss $\Qc(C_8)$. 

In what follows, we discuss $\Qc(G)$ case-by-case. 

\medskip

\noindent
\underline{The case $G = C_4 \times C_2$}: It is known that $\gauto(C_4 \times C_2) \cong_\mathrm{gr} D_4$ whose is automorphism is given as follows: 
for $\sigma \in D_4$, we let $\psi_\sigma : C_4 \times C_2 \rightarrow C_4 \times C_2$ by setting $\psi_\sigma((i,j))=(i+2j,i+j)$, 
and for $\tau \in D_4$, we let $\psi_{\tau} : C_4 \times C_2 \rightarrow C_4 \times C_2$ by setting $\psi_\tau((i,j))=(-i,i+j)$. 
Then the set of representatives of the conjugacy classes for $\gauto(C_4 \times C_2) \cong_\mathrm{gr} D_4$ is as follows: 
$$\id, \;\; \psi_\sigma, \;\; \psi_\sigma^2, \;\; \psi_\tau \;\text{ and }\; \psi_\tau \circ \psi_\sigma.$$
For each $\psi \in \{\id, \psi_\sigma,\psi_\sigma^2, \psi_\tau,\psi_\tau\circ\psi_\sigma\}$, we compute the invariants of $Q(C_4 \times C_2,\psi)$ as in Table~\ref{tab:C_4xC_2}: 
\begin{table}[tbh]
    \centering
    \begin{tabular}{cccccc}
        &$\psi$ &$\ord \psi$  &$|\fix(\psi,G)|$ & $P$   &   $\psi |_P$  \\
        \toprule
       $Q^8_1$&$\id$    &1        &8 &$\{e\}$ &$\id$ \\
       $Q^8_2$&$\psi_\sigma$ &4 &2                & $C_2 \times C_2$ & $\begin{pmatrix} 0 &1 \\ 1 &0\end{pmatrix}$ \\
       $Q^8_3$&$\psi_\sigma^2$ &2 & 4                 & $C_2$ & $\id$ \\
       $Q^8_4$&$\psi_\tau$   &2 & 4                 & $C_2$ & $\id$ \\
       $Q^8_5$&$\psi_\sigma \circ \psi_\tau$ &2   & 4                 & $C_2$ & $\id$ \\
        \bottomrule
    \end{tabular}

\medskip

    \caption{Invariants for $Q(C_4\times C_2,\psi)$}\label{tab:C_4xC_2}
\end{table}

By Corollary~\ref{cor1}, we can check that 
$Q^8_3\cong_\mathrm{qu} Q^8_4\cong_\mathrm{qu} Q^8_5$.

\noindent
\underline{The case $G= C_2 \times C_2 \times C_2$}: It is known that $\gauto(C_2 \times C_2 \times C_2) \cong_\mathrm{gr} \GL(3,2)$ 
and a set of representatives of its conjugacy classes (i.e., rational canonical forms) is as follows: 
$$\begin{pmatrix}
    1 & 0 & 0 \\
    0 & 1 & 0 \\
    0 & 0 & 1
    \end{pmatrix}, \;
    \begin{pmatrix}
    1 & 0 & 0 \\
    0 & 0 & 1 \\
    0 & 1 & 0
    \end{pmatrix}, \; 
    \begin{pmatrix}
    0 & 0 & 1 \\
    1 & 0 & 0 \\
    0 & 1 & 0
    \end{pmatrix}, \; 
    \begin{pmatrix}
    0 & 0 & 1 \\
    1 & 0 & 1 \\
    0 & 1 & 1
    \end{pmatrix}, \; 
    \begin{pmatrix}
    0 & 0 & 1 \\
    1 & 0 & 0 \\
    0 & 1 & 1
    \end{pmatrix}, \; 
    \begin{pmatrix}
    0 & 0 & 1 \\
    1 & 0 & 1 \\
    0 & 1 & 0
    \end{pmatrix}. 
$$
Let $M_1,\ldots,M_6$ be the above matrices from left to right, respectively. 
Then we can compute the invariants of $Q(C_2 \times C_2 \times C_2, M_i)$ for $i=1,\ldots,6$ as in  Table~\ref{tab:C2xC2xC2}: 
\begin{table}[tbh]
    \centering
    \begin{tabular}{cccccc}
        &$\psi$ &$\ord \psi$ & $|\fix(\psi,G)|$ & $P$ & $\psi |_P$ \\
        \toprule
        $Q^8_6$&$M_1=\id$    &1        &8 &$\{e\}$ &$\id$ \\

        $Q^8_7$&$M_2$ &2 & 4 & $C_2$ & $\mathrm{id}$ \\
        $Q^8_8$&$M_3$ &3 & 2 & $C_2\times C_2 $ & $\begin{pmatrix}
            0 & 1\\
            1 & 1
        \end{pmatrix}$ \\
        $Q^8_9$&$M_4$ &4 & 2 & $C_2\times C_2$ & $\begin{pmatrix}
            0 & 1\\
            1 & 0
        \end{pmatrix}$  \\
        $Q^8_{10}$&$M_5$ &7 & 1 & $C_2\times C_2\times C_2$ & $M_5$ \\
        $Q^8_{11}$&$M_6$ &7 & 1 & $C_2\times C_2\times C_2$ & $M_6$ \\
        \bottomrule
    \end{tabular}
    
    \medskip
    
    \caption{Invariants for $Q(C_2\times C_2\times C_2,M_i)$}
    \label{tab:C2xC2xC2}
\end{table}

\noindent
By Corollary~\ref{cor1}, we conclude that those quandles are all different.

\noindent
\underline{The case $G=D_4$}: 
A set of representatives of conjugacy classes is $\varphi_{1,0},\varphi_{1,1},\varphi_{1,2},\varphi_{3,1}$ and $\varphi_{3,2}$ as in Example~\ref{ex:cong}. 
We see from Corollary~\ref{cor2} that $Q(D_4,\varphi_{1,2}) \cong_\mathrm{qu} Q(D_4,\varphi_{3,2})$ and other quandles are all different. 
We can compute the invariants for $Q(D_4,\varphi_{a,b})$ as in Table~\ref{tab:D4}: 
\begin{table}[tbh]
    \centering
    \begin{tabular}{cccccc}
        &$\psi$ &$\ord \psi$ & $|\fix(\psi,G)|$ & $P$ & $\psi |_P$ \\
        \toprule
        $Q^8_{12}$&$\varphi_{1,0}=\id$    &1        &8 &$\{e\}$ &$\id$ \\

        $Q^8_{13}$&$\varphi_{3,1}$ &2 & 2 & $C_4$ & $3 \in C_4^\times$ \\
        $Q^8_{14}$&$\varphi_{1,2},\varphi_{3,2}$ &2 & 4 & $C_2$ & $\mathrm{id}$ \\
        $Q^8_{15}$&$\varphi_{1,1}$ &4 & 4 & $C_4$ & $\mathrm{id}$ \\
        \bottomrule
    \end{tabular}

\medskip

    \caption{Invariants for $Q(D_4,\varphi_{a,b})$}\label{tab:D4}
\end{table}

\noindent
\underline{The case $G=Q_8$}: Let $Q_8=\{\pm 1, \pm i \pm j , \pm k\}\subset \HH$. 
Then it is known that $\gauto(Q_8) \cong_\mathrm{gr} \Sf_4$ and its set of representatives of conjugacy classes is as follows: 
\begin{align*}
    \psi_1\colon & \pm 1 \mapsto \pm 1, \;  \pm i \mapsto \pm i,\  \pm j \mapsto \pm j,\  \pm k \mapsto \pm k \quad (\psi_1=\mathrm{id}),\\
    \psi_2\colon & \pm 1 \mapsto \pm 1, \;  \pm i \mapsto \pm i,\  \pm j \mapsto \mp j,\  \pm k \mapsto \mp k \quad (\psi_2=(\cdot)^i),\\
    \psi_3\colon & \pm 1 \mapsto \pm 1, \;  \pm i \mapsto \pm j,\  \pm j \mapsto \pm i,\  \pm k \mapsto \mp k,\\
    \psi_4\colon & \pm 1 \mapsto \pm 1, \;  \pm i \mapsto \pm j,\  \pm j \mapsto \pm k,\  \pm k \mapsto \pm i,\\
    \psi_5\colon & \pm 1 \mapsto \pm 1, \;  \pm i \mapsto \pm j,\  \pm j \mapsto \mp i,\  \pm k \mapsto \pm k \quad (\psi_5=\psi_3\circ \psi_2). 
\end{align*}
We can compute the invariants for $Q(Q_8,\psi_i)$ for $i=2,\ldots,5$ as in Table~\ref{tab:Q8}. 
We also check whether \ref{G-normal} and \ref{iikanji} are satisfied or not. 
\begin{table}[tbh]
    \centering
    \begin{tabular}{cccccccc}
        &$\psi$ &$\ord \psi$ & $|\fix(\psi,G)|$ & $P$ & $\psi |_P$ & \ref{G-normal} & \ref{iikanji} \\
        \toprule
       $Q^8_{16}$&$\psi_1$ &1& 8 & 1 & $\mathrm{id}$ & T & T\\
        $Q^8_{17}$&$\psi_2$ &2 & 4 & $C_2$ & $\mathrm{id}$ & T & T \\
        $Q^8_{18}$&$\psi_3$ &2 & 2 & $C_4$ & $3 \in C_4^\times$ &T &T \\
        $Q^8_{19}$&$\psi_4$ &3 & 2 & $Q_8$ & $\psi_4$ &T &F \\
        $Q^8_{20}$&$\psi_5$ &4 & 4 & $C_4$ & $\mathrm{id}$ &T &T\\
        \bottomrule
    \end{tabular}

\medskip

    \caption{Invariants for $Q(Q_8,\psi_i)$}
    \label{tab:Q8}
\end{table}

Furthermore, by Theorem~\ref{monosugoi}, we can verify the following isomorphisms: 
\begin{align*}
&Q^{8}_1 \cong_{\mathrm{qu}} Q^{8}_6 \cong_{\mathrm{qu}} Q^{8}_{12} \cong_{\mathrm{qu}} Q^{8}_{16}; \quad
Q^{8}_{13} \cong_{\mathrm{qu}} Q^{8}_{18};\\
&Q^{8}_3 \cong_{\mathrm{qu}} Q^{8}_4 \cong_{\mathrm{qu}} Q^{8}_{5}\cong_{\mathrm{qu}} Q^{8}_{7}\cong_{\mathrm{qu}} Q^{8}_{14}\cong_{\mathrm{qu}} Q^{8}_{17};
\quad
Q^{8}_2 \cong_{\mathrm{qu}} Q^{8}_9;
\quad
Q^{8}_{15} \cong_{\mathrm{qu}} Q^{8}_{20}.
\end{align*}
By summarizing the above discussions, we obtain the following list (see Table~\ref{tab:n8}): 
\begin{table}[tbh]
    \centering
    \begin{tabular}{ccccc}
         &$\ord \psi$ & $|\fix(\psi,G)|$ & $P$ & $\psi |_P$  \\
        \toprule
        $Q^8_1$  &1 &8 & 1 & $\mathrm{id}$ \\
        $Q^8_{13}$  &2 & 2 & $C_4$ & $\times 3$ \\
        $Q^8_3$  &2 & 4 & $C_2$ & $\mathrm{id}$\\
        $Q^8_8$  &3 & 2 & $C_2\times C_2 $ & $\begin{pmatrix}
            0 & 1\\
            1 & 1
        \end{pmatrix}$ \\
        $Q^8_{19}$  &3 & 2 &$Q_8$ & $\psi_4$ \\
        $Q^8_2$  &4 &2 &$C_2\times C_2$ & $\begin{pmatrix}
            0 & 1\\
            1 & 0
        \end{pmatrix}$  \\
        $Q^8_{15}$ &4 &4 & $C_4$ & $\mathrm{id}$ \\
        $Q^8_{10}$  &7& 1 & $C_2\times C_2\times C_2$ & $M_5$\\
        $Q^8_{11}$  &7& 1 & $C_2\times C_2\times C_2$ & $M_6$ \\
        \bottomrule
    \end{tabular}

\medskip

    \caption{List of $\Qc_{\GA}(8)$}
    \label{tab:n8}
\end{table}

\subsection{$n=12$}\label{sec:12}
It is known that $G$ is isomorphic to $C_{12}$, $C_6 \times C_2$, $D_6$, $\Dic_3$ or $\Af_4$ (see, e.g., \cite[P. 252]{Roman}), 
where $\Dic_3$ is a dicyclic group whose presentation is as follows: $$\Dic_{3}=\langle a,b : a^6=1,\  b^2=a^3,\  b^{-1} a b =a^{-1} \rangle.$$
By Corollary~\ref{cor3}, we do not need to discuss $\Qc(C_{12})$. 

Our classification can be performed in the similar way to the case $n=8$.

\noindent
\underline{The case $G=C_6 \times C_2$}: It is known that $\gauto(C_6 \times C_2) \cong_\mathrm{gr} D_6$ whose automorphism is given as follows: 
for $\sigma \in D_6$, we let $\alpha_\sigma : C_6 \times C_2 \rightarrow C_6 \times C_2$ by setting $\alpha_\sigma((i,j))=(2i+3j,i+j)$, 
and for $\tau \in D_6$, we let $\alpha_\tau : C_6 \times C_2 \rightarrow C_6 \times C_2$ by setting $\alpha_\tau((i,j))=(-i,i+j)$. 
Then the set of the conjugacy classes for $\gauto(C_6 \times C_2) \cong_\mathrm{gr} D_6$ is as follows: 
$$\id, \;\; \alpha_\sigma, \;\; \alpha_\sigma^2, \;\; \alpha_\sigma^3, \;\; \alpha_\tau \;\text{ and }\; \alpha_\tau \circ \alpha_\sigma.$$ 
For each $\psi \in \{\id,\alpha_\sigma,\alpha_\sigma^2,\alpha_\sigma^3,\alpha_\tau,\alpha_\tau \circ \alpha_\sigma\}$, 
we compute the invariants of $Q(C_6 \times C_2, \psi)$ as in Table~\ref{tab:C6xC2}: 
\begin{table}[tbh]
    \centering
    \begin{tabular}{cccccc}
         &$\psi$ &$\ord \psi$ & $|\fix(\psi,G)|$ & $P$ & $\psi |_P$  \\
        \toprule
        $Q^{12}_1$ & $\id$ & 1 & 12 & 1 & $\mathrm{id}$  \\
        $Q^{12}_2$ & $\alpha_\tau$ & 2 & 2 & $C_6$ & $\times 5$\\
        $Q^{12}_3$ & $\alpha_\sigma^3$ & 2 & 4 & $C_3$ & $\times 2$ \\
        $Q^{12}_4$ & $\alpha_\tau \circ \alpha_\sigma$ & 2 & 6 & $C_2$ & $\mathrm{id}$ \\
        $Q^{12}_5$ & $\alpha_\sigma^2$ & 3 & 3 & $C_2\times C_2$ & $\begin{pmatrix} 0 &1 \\ 1 &1 \end{pmatrix}$\\
        $Q^{12}_6$ & $\alpha_\sigma$ & 6 & 1 & $C_6\times C_2$ & $\alpha_\sigma$ \\
        \bottomrule
    \end{tabular}

\medskip

    \caption{List of $\mathcal{Q}(C_6\times C_2)$}\label{tab:C6xC2}
\end{table}

\noindent
\underline{The case $G=D_6$}: Along Section~\ref{sec:D_n}, we compute the invariants of $\Qc(D_6)$ as in Table~\ref{tab:D6}:

\begin{table}[tbh]
    \centering
    \begin{tabular}{cccccl}
        &$\psi$ & $\ord \psi$ & $|\fix(\psi,G)|$ & $P$ & $\psi |_P$ \\
        \toprule
        $Q^{12}_7$ &$\varphi_{1,0}$ & 1 & 12 & 1 & $\mathrm{id}$ \\
        $Q^{12}_8$ &$\varphi_{5,1}$ & 2 & 2 & $C_6$ & $\times 5$ \\
        $Q^{12}_9$ &$\varphi_{5,2}$ & 2 & 4 & $C_3$ & $\times 2$ \\
        $Q^{12}_{10}$ &$\varphi_{1,3}$ & 2 & 6 & $C_2$ & $\mathrm{id}$ \\
        $Q^{12}_{11}$ &$\varphi_{1,2}$ & 3 & 6 & $C_3$ & $\mathrm{id}$ \\
       $Q^{12}_{12}$ &$\varphi_{1,1}$ & 6 & 6 & $C_6$ & $\mathrm{id}$ \\
        \bottomrule
    \end{tabular}
    
    \medskip
    
    \caption{List of $\mathcal{Q}(D_6)$}\label{tab:D6}
\end{table}

\noindent
\underline{The case $G=\Dic_3$}: 
Note that each element of $\Dic_3$ can be of the form $a^ib^\varepsilon$ for $0 \leq i \leq 5$ and $\varepsilon \in \{0,1\}$. 
It is known that $\gauto(\Dic_3) \cong_\mathrm{gr} D_6$ whose isomorphism is given as follows: for $\sigma \in D_6$, we let 
$\beta_\sigma : \Dic_3 \rightarrow \Dic_3$ by setting $\beta_\sigma(a^i b^\varepsilon)=a^{i+\varepsilon}b^\varepsilon$, 
and for $\tau \in D_6$, we let $\beta_\tau : \Dic_3 \rightarrow \Dic_3$ by setting $\beta_\tau(a^i b^\varepsilon)=a^{-i}b^\varepsilon$. 
Then the set of representatives of the conjugacy classes for $\gauto(\Dic_3) \cong_\mathrm{gr} D_6$ is as follows: 
$$\id, \;\; \beta_\sigma, \;\; \beta_\sigma^2, \;\; \beta_\sigma^3, \;\; \beta_\tau \;\text{ and }\; \beta_\tau \circ \beta_\sigma.$$
For each $\psi \in \{\id,\beta_\sigma,\beta_\sigma^2,\beta_\sigma^3,\beta_\tau,\beta_\tau\circ\beta_\sigma\}$, 
we compute the invariants of $Q(\Dic_3,\psi)$ as in Table~\ref{tab:Dic3}: 
\begin{table}[tbh]
    \centering
    \begin{tabular}{cccccccc}
        &$\psi$ & $\ord \psi$ & $|\fix(\psi,G)|$ & $P$ & $\psi |_P$ & \ref{G-normal} & \ref{iikanji} \\
        \toprule
       $Q^{12}_{13}$ & $\id$ &1 & 12 & 1 & $\mathrm{id}$ &T &T \\
        $Q^{12}_{14}$ & $\beta_\tau \circ \beta_\sigma$ &2 & 2 & $C_6$ & $\times 5$ &T &T \\
        $Q^{12}_{15}$ & $\beta_\tau$ &2 & 4 & $C_3$ & $\times 2$ &T &T \\
        $Q^{12}_{16}$ & $\beta_\sigma^3$ &2 & 6 & $C_2$ & $\id$ &T &T \\
        $Q^{12}_{17}$ & $\beta_\sigma^2$ &3 & 6 & $C_3$ & $\id$ &T &T \\
        $Q^{12}_{18}$ & $\beta_\sigma$ &6 & 6 & $C_6$ & $\id$ &T &T \\
        \bottomrule
    \end{tabular}

\medskip

\caption{List of $\mathcal{Q}(\Dic_3)$}\label{tab:Dic3}
\end{table}

\noindent
\underline{The case $G=\Af_4$}: It is known that $\gauto(\Af_4) \cong \Sf_4$. We compute the invariants of $\Qc(\Af_4)$ as follows:

\begin{table}[tbh]
    \centering
    \begin{tabular}{cccccccl}
        &$\psi$ &$\ord \psi$ & $|\fix(\psi,G)|$ & $P$ & $\psi |_P$ & \ref{G-normal} & \ref{iikanji}\\
        \toprule
       $Q^{12}_{19}$ & $\mathrm{id}$ & 1 & 12 & 1 & $\mathrm{id}$   & T & T\\
        $Q^{12}_{20}$ & $(\cdot)^{(12)}$ & 2 & 2 & $\Af_4$ & $(\cdot)^{(12)}$ & T &F \\
        $Q^{12}_{21}$ & $(\cdot)^{(12)(34)}$ & 2 & 4 & $C_2\times C_2$ & $\id$ & T &T \\
        $Q^{12}_{22}$ & $(\cdot)^{(123)}$ & 3 & 3 & $C_2\times C_2$ & $\begin{pmatrix} 0 &1 \\ 1 &1 \end{pmatrix}$ & T &T \\
        $Q^{12}_{23}$ & $(\cdot)^{(1234)}$ & 4 & 2 & $\Af_4$ & $(\cdot)^{(1234)}$ & T &F \\
        \bottomrule
    \end{tabular}
    
    \medskip
    
        \caption{List of $\mathcal{Q}(\Af_4)$}\label{tab:A4}
\end{table}

Furthermore, by Theorem~\ref{monosugoi} and using {\tt GAP} \cite{GAP}, we can verify the following isomorphisms: 
\begin{align*}
&Q^{12}_1 \cong_{\mathrm{qu}} Q^{12}_7 \cong_{\mathrm{qu}} Q^{12}_{13} \cong_{\mathrm{qu}} Q^{12}_{19}; \quad
Q^{12}_2 \cong_{\mathrm{qu}} Q^{12}_8 \cong_{\mathrm{qu}} Q^{12}_{14}; \\
&Q^{12}_3 \cong_{\mathrm{qu}} Q^{12}_9 \cong_{\mathrm{qu}} Q^{12}_{15}; \quad
Q^{12}_4 \cong_{\mathrm{qu}} Q^{12}_{10} \cong_{\mathrm{qu}} Q^{12}_{16}; \quad Q^{12}_5 \cong_{\mathrm{qu}} Q^{12}_{22}. 
\end{align*}
By summarizing the above discussions, we obtain the following list (see Table~\ref{tab:n12}): 
\begin{table}[tbh]
    \centering
    \begin{tabular}{ccccl}
        &$\ord \psi$ & $|\fix(\psi,G)|$ & $P$ & $\psi |_P$  \\
        \toprule
        $Q_1^{12}$ &1 & 12 & 1 & $\mathrm{id}$ \\
        $Q_2^{12}$ &2 & 2 & $C_6$ & $\times 5$ \\
        $Q^{12}_{20}$ &2 & 2 & $\Af_4$ & $(\cdot)^{(12)}$ \\
        $Q_3^{12}$ &2 & 4 & $C_3$ & $\times 2$ \\
        $Q^{12}_{21}$  &2 & 4 & $C_2\times C_2$ &$\mathrm{id}$ \\
        $Q_4^{12}$ &2 & 6 & $C_2$ & $\mathrm{id}$ \\
        $Q^{12}_5$ &3 & 3 & $C_2\times C_2$ & $\begin{pmatrix} 0 &1 \\ 1 &1 \end{pmatrix}$ \\
        $Q^{12}_{11}$ &3 & 6 & $C_3$ & $\mathrm{id}$ \\
        $Q^{12}_{23}$ &4 & 2 & $\Af_4$ & $(\cdot)^{(1234)}$ \\
        $Q^{12}_6$ &6 & 1 & $C_6\times C_2$ &$\alpha_\sigma$ \\
        $Q^{12}_{12}$ &6 & 6 & $C_6$ & $\mathrm{id}$ \\
        \bottomrule
    \end{tabular}

\medskip
    
        \caption{List of $\Qc_{\GA}(12)$}\label{tab:n12}
\end{table}

\subsection{$n=15$}\label{sec:15}
Let $G$ be a group with $|G|=15$. Then $G$ is isomorphic to $C_{15}$ (see, e.g., \cite[P. 252]{Roman}), i.e., $\Qc_{\GA}(15)=\Qc(C_{15})$. 
By Corollary~\ref{cor:Nelson}, it is straightforward to check that 
$\gauto(C_{15}) \cong_\mathrm{gr} C_{15}^{\times} =\{1,2,4,7,8,11,13,14\}$ bijectively corresponds to $\Qc(C_{15})$. 
In particular, $|\Qc_{\GA}(15)|=8$. 

\subsection{$n=16$}\label{sec:16}
In the case $n=16$, we encounter the problem. Namely, there are two groups $G$ and $G'$ 
together with their automorphisms $\psi \in \gauto(G)$ and $\psi' \in \gauto(G')$ such that 
we cannot distinguish whether $Q(G,\psi) \cong_\mathrm{qu} Q(G',\psi')$ or not by using Theorem~\ref{monosugoi}. 
In fact, let $G=C_2 \times Q_8$ and let $G'=(C_4 \times C_2) \rtimes C_2$. 
We take $\phi \in \gauto(Q_8) \cong_\mathrm{gr} \Sf_4$ with $\ord \phi=3$ and 
let $\psi \in \gauto(G)$ be defined by $\psi((i,x))=(i,\phi(x))$, 
and we also take $\psi' \in \gauto(G')$ with $\ord \psi'=3$. 
Actually, we see that both $Q(G,\psi)$ and $Q(G',\psi')$ do not satisfy \ref{iikanji}, so we cannot apply Theorem~\ref{monosugoi}, 
while all of the invariants we have developed coincide. 

Hence, we cannot verify whether those are isomorphic as quandles or not by the current methods. 

\begin{rem}
    From Tables~\ref{tab:C_4xC_2}--\ref{tab:n12},
    it seems that the isomorphism among finite generalized Alexander quandles with the same cardinality can be determined only by the pair of $P$ and $\psi|_P$.
    However, there exist quandles that are not isomorphic even if the pair of $P$ and $\psi|_P$ are equal.
    When $n=16$, $Q(D_8,1,2)$ and $Q(D_8,5,2)$ have $P=P'\cong_\mathrm{gr} C_4$ (i.e., $d=d'=2$)
    and $\varphi_{1,2}|_P=\varphi_{5,2}|_{P'}=\mathrm{id}$ (i.e., $1\equiv 5 \pmod{\frac{8}{2}}$).
    On the other hand,
    since $|\fix(\varphi_{1,2},D_4)|=8$ and $|\fix(\varphi_{5,2},D_4)|=4$,
    the two quandles $Q(D_8,1,2)$ and $Q(D_8,5,2)$ are not isomorphic.
\end{rem}

\end{document}